\newtheorem{theorem}{Theorem}[section]
\newtheorem{lemma}[theorem]{Lemma}
\newtheorem{proposition}[theorem]{Proposition}
\newtheorem{conjecture}[theorem]{Conjecture}
\newtheorem{corollary}[theorem]{Corollary}
\newtheorem{definition}[theorem]{Definition}
\newtheorem{notation}[theorem]{Notation}
\newtheorem{remark}[theorem]{Remark}
\newtheorem{example}[theorem]{Example}
\newtheorem{fact}[theorem]{Fact}
\begin{document}
\author[A.  Bretto,  M. Nasernejad,   and   J. Toledo]{Alain ~Bretto, Mehrdad ~Nasernejad$^{*}$,   Jonathan Toledo}
\title[Strong persistence property and normally torsion-freeness] {On the strong persistence property and normally torsion-freeness of square-free  monomial ideals}
\subjclass[2010]{13B25, 13F20, 05C25, 05E40.} 
\keywords {Strong persistence property, Normally torsion-free monomial ideals,   Associated prime ideals, Clutters.}

\thanks{$^*$Corresponding author}


\maketitle


\begin{abstract}
In this paper,  we first show that any square-free monomial ideal in $K[x_1, x_2, x_3, x_4, x_5]$ has  the   strong persistence property. Next, 
we will provide a criterion for a minimal counterexample to the Conforti-Cornu\'e{j}ols conjecture.  Finally, we give 
 a necessary and sufficient condition to determine the normally torsion-freeness  of a linear combination of two  
  normally torsion-free square-free monomial  ideals.
\end{abstract}
\vspace{0.4cm}


 \section{Introduction}

Let $I$ be an ideal in a  commutative  Noetherian ring  $R$. A prime ideal $\mathfrak{p}\subset  R$ is an {\it associated prime} of $I$ if there exists an element $v$ in $R$ such that $\mathfrak{p}=(I:_R v)$. The  {\it set of associated primes} of $I$, denoted by  $\mathrm{Ass}_R(R/I)$, is the set of all prime ideals associated to  $I$. We will be interested in the sets $\mathrm{Ass}_R(R/I^s)$ when $s$  varies. A notable  result of  Brodmann \cite{BR} showed that the sequence 
$\{\mathrm{Ass}_R(R/I^s)\}_{s \geq 1}$ of associated prime ideals is stationary  for large $s$. That is to say, there exists a positive integer $s_0$ such that $\mathrm{Ass}_R(R/I^s)=\mathrm{Ass}_R(R/I^{s_0})$ for all $s\geq s_0$. The  minimal such $s_0$ is called the
{\it index of stability}   of  $I$ and $\mathrm{Ass}_R(R/I^{s_0})$ is called the {\it stable set } 
 of associated prime ideals of  $I$, which is denoted by $\mathrm{Ass}^{\infty }(I).$
  There have been many  questions arising from Brodmann’s result.
 One of the questions is as follows:   Is it true that  $\mathrm{Ass}_R(R/I^s)\subseteq \mathrm{Ass}_R(R/I^{s+1})$ for all $s\geq 1$?
 According to an example of McAdam \cite{Mc}, in general, this   question has a negative answer. We say that an ideal $I$ of $R$  satisfies the {\it persistence property} if 
 $\mathrm{Ass}_R(R/I^s)\subseteq \mathrm{Ass}_R(R/I^{s+1})$ for all  $s\geq 1$. 
In addition,  an ideal  $I$ of $R$ has the {\it strong persistence property}  if $(I^{s+1}:_R I)=I^s$ for all  $s\geq 1$. However, Ratliff \cite{RA} proved that   $(I^{s+1}:_R I)=I^s$ for all large $s$. In particular, it should be noted that  the strong  persistence property implies the persistence property (for example see  the proof of \cite[Proposition 2.9]{N1}), but  the converse is not true.

Suppose  now  that $I$ is a monomial  ideal  in  a  polynomial ring $R=K[x_1,\ldots,x_n]$ over a field $K$ and $x_1,\ldots,x_n$ are indeterminates. 
The above question  does not have an affirmative  answer even for all square-free   monomial ideals, such a counterexample can be found in  \cite{KSS}.
Monomial ideals have a  profound  role in exploring  the connection between combinatorics and  commutative algebra such that  the relation between these two  areas  enables  us to employ techniques   in commutative algebra to dig    combinatorial problems, and vice versa. For this reason,  commutative algebraists 
have   commenced  studying   the properties of   combinatorial objects such as  graphs, hypergraphs, simplicial complexes, and posets 
 through monomial ideals. One of the pioneers   in this field is  Villarreal  who  introduced the notion  of edge ideals.  In fact, the {\it edge ideal} corresponding to  a simple finite undirected  graph $G$ has been introduced in \cite{VI1} and is generated by  the monomials $x_ix_j$, where $\{i, j\}$ is an
edge of $G$, and the Alexander dual of  it, denoted by  $I(G)^\vee$, is called the {\it cover ideal} corresponding to $G$.

  Up to now,  by applying combinatorial approaches,  a bunch of papers  have been published for detecting  the classes of monomial ideals which satisfy the persistence property and the strong persistence property.  Particularly, the following classes of monomial ideals satisfy the strong  persistence property:
\begin{itemize}
\item[(1)] $I$ is the  edge ideal of a simple graph (see \cite[Theorem 2.15]{MMV}).
\item[(2)]  $I$ is a  polymatroidal ideal  (see \cite[Proposition 2.4]{HQ}). 
\item[(3)] $I$ is the edge ideal  of a finite   graph with  loops  (see \cite[Corollary 2]{RT}).
\item[(4)] $I$ is a unisplit (or a  separable) monomial ideal (see  \cite{N2}). 
\end{itemize}
On the other hand,  according to   \cite[Theorem 6.2]{RNA}, every normal monomial ideal has the strong persistence property. Hence, 
 the cover ideals of the following   graphs  have  the strong persistence property: 
\begin{itemize}
\item[(1)]  Cycle graphs of odd orders (see \cite[Theorem 3.3]{NKA}).
\item[(2)] Wheel  graphs of even orders  (see \cite[Theorem 3.9]{NKA}).
\item[(3)] Helm  graphs of odd orders at least $5$  (see \cite[Theorem 3.12]{NKA}).
\item[(4)]  Jahangir’s graphs (see \cite[Corollary 2.5]{ANR1}).
\item[(5)]  Theta graphs (see \cite[Theorem 2.11]{ANR2}).
\item[(6)] The union of two graphs (see \cite[Lemma 6 and Theorem 11]{KNT}).
\end{itemize}
Along this argument, it has already been stated in \cite[Theorem 9]{RT} that any square-free monomial ideal in $K[x_1, x_2, x_3, x_4]$  has the strong
persistence property. Nevertheless,  the ideal $I\subset K[x_1, x_2, x_3, x_4, x_5, x_6]$ generated by 
$x_1x_2x_3,$ $x_1x_2x_4,$ $x_1x_3x_5,$ $x_1x_4x_6,$ $x_1x_5x_6,$ $x_2x_3x_6,$ $x_2x_4x_5,$ $x_2x_5x_6,$
 $x_3x_4x_5,$ $x_3x_4x_6$ does not satisfy the strong persistence property due to $(I^3:I)\neq I^2$. So,  it is natural to ask that is any square-free monomial ideal in $K[x_1, x_2, x_3, x_4,x_5]$  has the strong persistence property? The first motivation of this paper is to provide an affirmative answer to this question. 
 
 We now turn our attention to the second aim of this paper. An  ideal $I$ in a commutative Noetherian ring $R$ is called {\it normally torsion-free} 
 if $\mathrm{Ass}_R(R/I^s)\subseteq \mathrm{Ass}_R(R/I)$   for all $s\geq 1$. So far,  little is known for the normally torsion-free monomial
ideals.   It  has been proved   that  a finite simple graph $G$ is bipartite if and only if its edge ideal is normally torsion-free,  if and only if its cover ideal is normally torsion-free, refer to  \cite{GRV, SVV}. Moreover,  in \cite[Theorem  3.3]{N3}, the authors showed   that  the Alexander dual of the monomial
ideal generated by the paths of maximal lengths in  a rooted starlike tree is normally torsion-free. 
Also, in  \cite[Theorem 3.2]{KHN1}, it has been proved  that   the Alexander dual of path ideals generated by all paths of length $2$ in rooted trees are normally
torsion-free. More recently, it has been established in \cite[Theorem 2.8]{NQ} that if $G$ is  a strongly chordal graph, then  both $NI(G)$ and $DI(G)$ are  normally torsion-free, where $NI(G)$ (respectively, $DI(G)$) stands for the closed neighborhood ideal of $G$ (respectively, dominating ideal of $G$). 
Moreover, based on  \cite[Theorem 4.3]{NQ},    $DI(C_n)$ is normally torsion-free if and only if $n \in \{3,6,9\}$, 
the definitions of closed neighborhood ideals and  dominating ideals and their properties can be found in \cite{NBR, NQBM}. 
 Furthermore,  \cite[Lemma 5.15]{NQKR} and   \cite[Theorem 4.6]{MNQ} give classes of normally torsion-free $t$-spread monomial ideals.
   In particular, \cite[Theorem 3.11]{SNQ} says that every square-free transversal polymatroidal ideal is normally torsion-free.
 
 In broadly speaking,  normally torsion-free square-free monomial ideals are closely related to Mengerian hypergraphs.  A hypergraph  is called 
\textit{Mengerian} if it satisfies a certain min-max equation, which is known as the Mengerian property in hypergraph theory or has the max-flow min-cut property in integer programming. In fact, based on  \cite[Theorem 14.3.6]{V1},  it is well-known that if $\mathcal{C}$ is  a clutter and $I=I(\mathcal{C})$  its edge ideal, then $I$ is  normally torsion-free if and only if  $I^{k}=I^{(k)}$ for all $k\geq 1$, if and only if $\mathcal{C}$ is Mengerian, if and only if $\mathcal{C}$  has the max-flow min-cut property. One of the famous open problems in this theme is the Conforti-Cornu\'e{j}ols conjecture. The second motivation of this paper comes back 
 to provide a criterion for a minimal counterexample to the Conforti-Cornu\'e{j}ols conjecture. 
 
We are now ready to discuss the  final target of this  work. The third motivation of this paper originates from two results related to the integrally closedness  (resp.   normality)  of  a linear combination of two  integrally closed (resp.  normal) (square-free) monomial ideals. Indeed, let $L=x_iI+x_jJ\subset R=K[x_1, \ldots, x_n]$ with $1\leq i\neq j \leq n$ be a square-free monomial ideal such that $\mathrm{gcd}(x_j, u)=1$  and   $\mathrm{gcd}(x_i, v)=1$   for all $u\in \mathcal{G}(I)$ and  $v\in \mathcal{G}(J)$. Already, in \cite{ANKRQ} and  \cite{ANR}, it has been argued on the integrally closedness and normality of $L$ when $I$ and $J$ are integrally closed (resp. normal) (square-free) monomial ideals.  Inspired by these results, we are interested in the normally torsion-freeness of $L$ when $I$ and $J$ are normally torsion-free. 

 This paper is organized as follows.  In Section 2, we collect all of the necessary definitions and results which will be used in the rest of this paper. 
Section 3  is devoted to showing  that any square-free monomial ideal in $K[x_1, x_2, x_3, x_4, x_5]$ has  the   strong persistence property,
cf.  Theorem  \ref{main.Section3}. Section 4  deals with  providing  a criterion for a minimal counterexample to the Conforti-Cornu\'e{j}ols conjecture, 
see Corollary   \ref{Cor.3}.   Finally, Section 5  is concerned with giving  a necessary and sufficient condition to determine the normally torsion-freeness  of a linear combination of two     normally torsion-free square-free monomial  ideals, refer to Theorem \ref{Linear-Combination}.


\section{Preliminaries}

We here collect the  necessary definitions and results which will be used in the rest of this paper. We begin with the following proposition. 

\begin{proposition} (\cite[Proposition 8]{RT}) \label{RT-Pro-8}
A clutter $\mathcal{C}$  with $3$ edges has the strong persistence property.
\end{proposition}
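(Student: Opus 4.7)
The plan is to work entirely with monomials, since $(I^{s+1}:I)$ is a monomial ideal whenever $I$ is. Write $I = I(\mathcal{C}) = (m_1,m_2,m_3)$ where each $m_i$ is a squarefree monomial and no $m_i$ divides another (because $\mathcal{C}$ is a clutter). The inclusion $I^s \subseteq (I^{s+1}:I)$ is automatic, so it suffices to fix an arbitrary monomial $u$ satisfying $u m_i \in I^{s+1}$ for every $i\in\{1,2,3\}$ and produce a witness for $u \in I^s$.

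For each $i$, the hypothesis gives a factorization
\[
u m_i \;=\; m_1^{a_i} m_2^{b_i} m_3^{c_i}\, v_i, \qquad a_i+b_i+c_i=s+1,
\]
with $v_i$ a monomial. The first move is a simple cancellation: if $a_1\ge 1$, then $u$ is divisible by $m_1^{a_1-1} m_2^{b_1} m_3^{c_1} v_1$, a product of $s$ generators times a monomial, so $u\in I^s$ and we are done; symmetric statements dispose of the cases $b_2\ge 1$ and $c_3\ge 1$. This reduces the problem to the extremal situation $a_1=b_2=c_3=0$, in which
\[
u m_1 \in (m_2,m_3)^{s+1}, \quad u m_2 \in (m_1,m_3)^{s+1}, \quad u m_3 \in (m_1,m_2)^{s+1}.
\]

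The main step is to splice these three relations together into a single factorization $u = m_1^{e_1} m_2^{e_2} m_3^{e_3} w$ with $e_1+e_2+e_3 = s$. The plan is to propose candidate exponents built from the data $(a_i,b_i,c_i)$ (for instance $e_1=\min(a_2,a_3)$, $e_2=\min(b_1,b_3)$, $e_3=\min(c_1,c_2)$, adjusting if their sum exceeds $s$) and then verify the divisibility one variable at a time. Here the squarefree hypothesis is essential: for each variable $x$, the exponent triple $(\alpha_1,\alpha_2,\alpha_3)$ in $(m_1,m_2,m_3)$ lies in $\{0,1\}^3$, so the verification reduces to a finite case split over the eight subsets $S(x)\subseteq\{1,2,3\}$ recording which generators contain $x$, using the three variable-wise inequalities forced by the factorizations above.

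The main obstacle is precisely this last verification: one must exhibit a \emph{single} triple $(e_1,e_2,e_3)$ of non-negative integers summing to exactly $s$ whose divisibility constraint $e_1\alpha_1(x)+e_2\alpha_2(x)+e_3\alpha_3(x)\le \mu(x)$ holds simultaneously for every variable $x$ (where $\mu(x)$ is the exponent of $x$ in $u$). This is where the assumption $|\mathcal{C}|=3$ is decisive: with only three generators the associated covering polytope is simple enough that the rational relaxation always has an integral optimum of $\ell^1$-norm at least $s$, whereas the counterexample of Kaiser–Stehl\'ik–\v{S}krekovski recalled in the introduction shows that any analogous statement fails once the clutter has enough edges.
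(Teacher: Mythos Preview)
The paper does not supply a proof of this proposition; it is quoted from \cite{RT} as a preliminary fact in Section~2, so there is no in-paper argument to compare against. I therefore assess your sketch on its own terms.

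Your reduction is correct through the point $a_1=b_2=c_3=0$, leaving $um_i\in(m_j,m_k)^{s+1}$ for each cyclic triple $\{i,j,k\}=\{1,2,3\}$. The argument then stops exactly where the content begins. You propose candidate exponents $e_1=\min(a_2,a_3)$, $e_2=\min(b_1,b_3)$, $e_3=\min(c_1,c_2)$ and assert that a finite case split over the eight patterns $S(x)\subseteq\{1,2,3\}$ will confirm divisibility, but you do not carry out that split, nor do you verify that $e_1+e_2+e_3\ge s$ so that trimming down to a triple of sum exactly $s$ is even possible. Both points are nontrivial: the data are constrained only by $a_2+c_2=a_3+b_3=b_1+c_1=s+1$, and it is not evident that the proposed minima interact correctly with the variable-wise bounds extracted from the three factorizations.

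The closing sentence---that for three generators ``the rational relaxation always has an integral optimum of $\ell^1$-norm at least~$s$''---is a restatement of the goal in linear-programming language, not a proof of it. The $\{0,1\}$ constraint matrix with three columns is not in general totally unimodular (the rows $(1,1,0),(0,1,1),(1,0,1)$, arising already from the triangle $m_1=x_1x_3$, $m_2=x_1x_2$, $m_3=x_2x_3$, form a $3\times 3$ submatrix of determinant~$2$), so integrality of the LP optimum does not come for free and must be argued directly. As written, the proposal is a correct outline that locates the crux but leaves it unproved; you need either to execute the variable-by-variable case analysis in full, or to give a structural reason specific to three squarefree generators why the required integral witness always exists.
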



\begin{lemma} (\cite[Lemma 2.2]{NKRT}) \label{NKRT-Lem-2.2}
Suppose that $I_1$ and $I_2$ are two monomial ideals in a polynomial ring $R = K[x_1, \ldots, x_n]$ over a field $K$ 
 such that $\mathrm{supp}(I_1) \cap \mathrm{supp}(I_2) = \emptyset$. Then  $I_1+I_2$ has the strong persistence property 
if and only if $I_1$ or $I_2$ has the strong persistence property.
\end{lemma}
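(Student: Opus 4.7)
The plan is to analyze the colon ideal $(I^{s+1}:I)$ for $I=I_1+I_2$ by decoupling it along the two disjoint variable sets $X_i:=\mathrm{supp}(I_i)$. Every monomial $m\in R$ factors as $m=m_1m_2m_0$ with $m_i\in K[X_i]$ and $m_0$ a monomial in the remaining variables; since such extra variables are coprime to every generator of $I$, they affect neither membership in $I^s$ nor the colon $(I^{s+1}:I)$, and we may assume $m_0=1$. For a monomial $n$ set $\mathrm{ord}_{I_i}(n):=\max\{k\ge 0:n\in I_i^k\}$.

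The key technical step is the following equivalence, obtained by a direct chase: for $m=m_1m_2$ with $m_i\in K[X_i]$, letting $\alpha=\mathrm{ord}_{I_1}(m_1)$ and $\beta=\mathrm{ord}_{I_2}(m_2)$, one has
\[
m\in(I^{s+1}:I)\iff m_1\in(I_1^{s+1-\beta}:I_1)\ \text{and}\ m_2\in(I_2^{s+1-\alpha}:I_2),
\]
together with $m\in I^s\iff\alpha+\beta\ge s$, using the convention $(I_i^{k}:I_i)=R$ when $k\le 0$. This rests on the observation that, since $X_1\cap X_2=\emptyset$, a product $pq$ with $p\in K[X_1]$, $q\in K[X_2]$ lies in $I_1^aI_2^b$ precisely when $p\in I_1^a$ and $q\in I_2^b$; expanding $I^{s+1}=\sum_{a+b=s+1}I_1^aI_2^b$ and testing $mu\in I^{s+1}$ separately against generators $u\in\mathcal{G}(I_1)$ and $u\in\mathcal{G}(I_2)$ then yields the equivalence.

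For the implication ``$I_1$ or $I_2$ has SPP $\Rightarrow I$ has SPP,'' take $m\in(I^{s+1}:I)$ and assume without loss of generality that $I_1$ has SPP. If $\beta\ge s$ then $m_2\in I_2^s\subseteq I^s$. Otherwise $s+1-\beta\ge 2$, so the SPP of $I_1$ gives $m_1\in(I_1^{s+1-\beta}:I_1)=I_1^{s-\beta}$, whence $\alpha\ge s-\beta$ and $m\in I_1^{s-\beta}I_2^{\beta}\subseteq I^s$.

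For the converse, argue the contrapositive: assume neither $I_1$ nor $I_2$ has SPP and choose $s_i\ge 1$ with monomials $m_i^{(0)}\in(I_i^{s_i+1}:I_i)\setminus I_i^{s_i}$. Put $\alpha=\mathrm{ord}_{I_1}(m_1^{(0)})\le s_1-1$, $\beta=\mathrm{ord}_{I_2}(m_2^{(0)})\le s_2-1$, and set $s=\min(s_1+\beta,s_2+\alpha)\ge 1$. The monotonicity $(I_i^{k+1}:I_i)\supseteq(I_i^{l+1}:I_i)$ for $k\le l$ yields $m_1^{(0)}\in(I_1^{s+1-\beta}:I_1)$ and $m_2^{(0)}\in(I_2^{s+1-\alpha}:I_2)$, so $m_1^{(0)}m_2^{(0)}\in(I^{s+1}:I)$; however $\alpha+\beta\le s-1<s$ (a short case check on whether the minimum is $s_1+\beta$ or $s_2+\alpha$) rules out $m_1^{(0)}m_2^{(0)}\in I^s$, contradicting SPP for $I$. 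The main obstacle is the balancing in this last step: the choice $s=\min(s_1+\beta,s_2+\alpha)$ is precisely what keeps both colon memberships alive while simultaneously forcing $\alpha+\beta<s$.
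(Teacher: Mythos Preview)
The paper does not prove this lemma; it is merely cited from \cite{NKRT} as a preliminary result, so there is no in-paper proof to compare against. That said, your argument is correct and self-contained.

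Your decoupling via $\mathrm{ord}_{I_i}$ and the key equivalence
\[
m_1m_2\in(I^{s+1}:I)\iff m_1\in(I_1^{s+1-\beta}:I_1)\ \text{and}\ m_2\in(I_2^{s+1-\alpha}:I_2)
\]
is exactly the right reduction, and both directions are handled cleanly. In the forward direction the SPP of $I_1$ collapses $(I_1^{s+1-\beta}:I_1)$ to $I_1^{s-\beta}$, forcing $\alpha+\beta\ge s$. In the converse, your choice $s=\min(s_1+\beta,s_2+\alpha)$ is the delicate point and it works: it keeps $s+1-\beta\le s_1+1$ and $s+1-\alpha\le s_2+1$ (so both colon memberships survive by monotonicity of $(I_i^{k}:I_i)$ in $k$), while $\alpha\le s_1-1$ and $\beta\le s_2-1$ guarantee $\alpha+\beta<s$ in either case of the minimum. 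One small omission worth stating explicitly is that the witnesses $m_i^{(0)}$ may be taken in $K[X_i]$, since variables outside $X_i$ are units modulo the relevant membership conditions; you implicitly use this when applying the equivalence.
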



\begin{theorem} (\cite[Theorem 9]{RT}) \label{RT-Th-9}
If $I$ is a squarefree monomial ideal in  $K[x_1,x_2,x_3,x_4]$, then $I$ has the strong persistence property.
\end{theorem}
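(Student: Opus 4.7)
The plan is to combine the decomposition lemma (Lemma \ref{NKRT-Lem-2.2}) with the three-generator result (Proposition \ref{RT-Pro-8}) to reduce to a short list of residual cases, each of which is dispatched by a known result or a short direct verification.

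First I would normalise $I$. If some minimal generator has degree one, say $x_i$, then the antichain property forces every other minimal generator to be disjoint in support from $x_i$, so $I = (x_i) + J$ with disjoint supports. Since the principal ideal $(x_i)$ is trivially strongly persistent, Lemma \ref{NKRT-Lem-2.2} reduces the problem to $J$, an ideal in at most three variables. More generally, if $I$ admits any variable-disjoint decomposition $I = I_1 + I_2$ (in particular if some variable does not appear in $\mathrm{supp}(I)$), the same lemma reduces to smaller pieces. Inducting on the number of variables, with trivial base cases in one or two variables, I may assume that every minimal generator has degree at least $2$, every variable appears in $\mathrm{supp}(I)$, and $I$ is indecomposable in the above sense.

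Next, if $|\mathcal{G}(I)| \leq 3$, Proposition \ref{RT-Pro-8} closes the argument, so I assume $|\mathcal{G}(I)| \geq 4$. The generators are then square-free monomials of degree $2$, $3$, or $4$ in $K[x_1,x_2,x_3,x_4]$ forming an antichain with full support. The unique quartic $x_1x_2x_3x_4$ divides every other square-free monomial in four variables, so it cannot coexist with any other generator. This leaves ten candidate monomials, and a direct enumeration up to the $S_4$-action permuting variables shows that the antichains of size at least four with full support fall into three families: pure edge ideals (all generators of degree two), the full degree-three Veronese-type ideal $(x_1x_2x_3, x_1x_2x_4, x_1x_3x_4, x_2x_3x_4)$, and (up to permutation) the single mixed ideal $(x_1x_2, x_1x_3, x_1x_4, x_2x_3x_4)$.

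Edge ideals of graphs satisfy the strong persistence property by \cite[Theorem 2.15]{MMV}; the Veronese-type ideal is polymatroidal (being generated by the bases of the uniform matroid $U_{3,4}$) and hence has the strong persistence property by \cite[Proposition 2.4]{HQ}. For the mixed ideal $I = (x_1x_2, x_1x_3, x_1x_4, x_2x_3x_4)$, I would verify normality directly: its Newton polyhedron is cut out by $x_j \geq 0$, by $x_1 + x_j \geq 1$ for $j = 2, 3, 4$, by $x_2 + x_3 + x_4 \geq 1$, and by $2x_1 + x_2 + x_3 + x_4 \geq 3$, and a short linear-programming argument then shows that every lattice point in $k \cdot \mathrm{NP}(I)$ decomposes as $\sum_i c_i u_i + r$ with non-negative integers $c_i$ summing to $k$ and $r \in \mathbb{Z}_{\geq 0}^4$. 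Normality then supplies the strong persistence property by \cite[Theorem 6.2]{RNA}. The main technical obstacle is confirming both that the enumeration above is complete and that the final normality verification works---both are finite tasks, made tractable by the restriction to four variables.
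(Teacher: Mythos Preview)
The paper does not give its own proof of this statement: Theorem~\ref{RT-Th-9} is quoted in the preliminaries from \cite[Theorem~9]{RT} without argument, so there is no in-paper proof to compare against. Your reduction-and-enumeration strategy is nonetheless sound, and the case split into edge ideals, the uniform matroid $U_{3,4}$, and the single mixed clutter $(x_1x_2,x_1x_3,x_1x_4,x_2x_3x_4)$ is complete (the antichain analysis you sketch is correct).

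Two remarks. First, Proposition~\ref{RT-Pro-8} is stated for clutters with exactly three edges; you should also cover one or two generators, which is immediate (e.g.\ such ideals are normal, or see \cite[Theorem~5.10]{RNA} as the present paper does). Second, for the mixed ideal you propose a direct normality check; this works, but the paper in fact supplies a cleaner tool. Take $\mathcal{C}$ on $\{x_2,x_3,x_4\}$ with $E(\mathcal{C})=\{\{x_2\},\{x_3\},\{x_4\}\}$, so $I(\mathcal{C})=(x_2,x_3,x_4)$ is strongly persistent; then Proposition~\ref{strongly persistent.Pro.1} with $x=x_1$ gives exactly the clutter with edges $\{x_2,x_3,x_4\},\{x_1,x_2\},\{x_1,x_3\},\{x_1,x_4\}$, establishing the strong persistence property for your mixed ideal without any Newton-polyhedron computation. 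Using this in place of the normality verification removes the only genuinely unchecked step in your outline.
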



\begin{lemma} (\cite[Lemma 2]{RT}) \label{RT-Lem-2}
 Let $\mathcal{C}$  be a clutter. If there exists an edge $f \in  E(\mathcal{C})$ such that  $A =\{g\cap  f \mid  g \in E(\mathcal{C})\}$ 
 is a chain, then $I(\mathcal{C})$  has the strong persistence property.
\end{lemma}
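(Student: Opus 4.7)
The plan is to establish the stronger identity $(I^{s+1} : x^f) = I^s$ for every $s \geq 1$; since $I^s \subseteq (I^{s+1}:I) \subseteq (I^{s+1}:x^f)$ always holds, this yields the strong persistence property at once. Because $I$ is a monomial ideal, it suffices to take an arbitrary monomial $u$ with $u\cdot x^f \in I^{s+1}$ and show $u \in I^s$.

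The first step is to write $u \cdot x^f = x^{h_1} x^{h_2} \cdots x^{h_{s+1}} \cdot w$ for some (not necessarily distinct) edges $h_1,\ldots,h_{s+1}$ of $\mathcal{C}$ and some monomial $w$, and then invoke the chain hypothesis on $A$ to relabel so that $h_1 \cap f \subseteq h_2 \cap f \subseteq \cdots \subseteq h_{s+1} \cap f$. I would note in passing that since $\mathcal{C}$ is a clutter, $h_{s+1} \cap f = f$ forces $h_{s+1} = f$.

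Next I would split into two cases. In the easy case $h_{s+1} = f$, cancelling $x^f$ from both sides immediately yields $u = x^{h_1} \cdots x^{h_s} \cdot w \in I^s$. In the hard case $h_{s+1} \neq f$, the set $V_0 := f \setminus h_{s+1}$ is nonempty; the chain ordering forces $h_j \cap f \subseteq h_{s+1} \cap f$ for every $j$, so none of the variables indexed by $V_0$ appear in any $x^{h_j}$. Hence $x^{V_0}$ must divide $w$, and after writing $w = x^{V_0} w'$, cancelling $x^{V_0}$ from $x^f = x^{V_0} \cdot x^{h_{s+1} \cap f}$, and then cancelling the factor $x^{h_{s+1} \cap f}$ against the corresponding piece of $x^{h_{s+1}}$ on the right, I obtain $u = x^{h_1} \cdots x^{h_s} \cdot x^{h_{s+1} \setminus f} \cdot w' \in I^s$.

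The main obstacle is the variable-accounting in the second case: one must verify that the variables indexed by $V_0$ cannot hide inside any of the edge-monomials $x^{h_j}$ but must all be supplied by the monomial $w$. This is exactly where the chain condition is indispensable, since it is precisely the simultaneous containment $h_j \cap f \subseteq h_{s+1} \cap f$ for every $j$ that prevents these variables from slipping into some $h_j$; without it, choosing $h_{s+1}$ merely to maximize $|h_j \cap f|$ would not suffice.
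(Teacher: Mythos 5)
Your argument is correct and complete: the sandwich $I^s\subseteq (I^{s+1}:I)\subseteq (I^{s+1}:x^f)$ together with your verification that $(I^{s+1}:x^f)\subseteq I^s$ (ordering the $h_j\cap f$ by the chain hypothesis, using the clutter condition to identify the top case $h_{s+1}=f$, and the variable count on $V_0=f\setminus h_{s+1}$ forcing $x^{V_0}\mid w$ in the remaining case) does prove the strong persistence property. The paper itself states this lemma only by citation to \cite[Lemma 2]{RT} and gives no proof, and your colon-with-$x^f$ strategy is exactly the kind of argument that result rests on, so nothing further is needed.
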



\begin{definition} (\cite[Page 303]{HM}) 
\em{
 Assume that $I\subset R=K[x_1, \ldots, x_n]$ is  a square-free monomial ideal and $\Gamma \subseteq \mathcal{G}(I)$, where $\mathcal{G}(I)$ denotes the unique minimal set of monomial generators of  $I$. Then we say $\Gamma$ is an {\it independent} set in $I$ if $\mathrm{gcd}(f,g)=1$ for each $f,g\in \Gamma$ with $f\neq g$. We denote the maximum cardinality of an independent set in $I$ by $\beta_1(I)$. 
}
\end{definition}


  \begin{definition} (\cite[Definition 6.5.2]{V1})
  \em{
Let  $I\subset R=K[x_1, \ldots, x_n]$ be  a square-free monomial ideal. Then the  {\it deletion} (respectively, \textit{contraction}) of $x_i$  from  $I$ with $1\leq i \leq n$, denoted by $I\setminus x_i$ (respectively, $I/x_i$), is obtained by setting $x_i=0$ (respectively, $x_i=1$) in every minimal generator of $I$.
}
\end{definition}
 

\begin{definition} (\cite[Definition 6.1.5]{V1})
\em{
The \textit{support} of a monomial $u$ in $R=K[x_1, \ldots, x_n]$, denoted by $\mathrm{supp}(u)$, is the set of variables that divide $u$. In particular, we consider 
$\mathrm{supp}(1)=\emptyset$.  Moreover, for a monomial ideal $I$ of $R$, we set $\mathrm{supp}(I)=\bigcup_{u \in \mathcal{G}(I)}\mathrm{supp}(u)$. 
}
\end{definition}


\begin{definition} (\cite[Definition 4.3.22]{V1}) \label{Def.4.3.22}
\em 
{
 Let $I$ be an ideal of a ring $R$ and $\mathfrak{p}_1, \ldots, \mathfrak{p}_r$  the minimal primes of $I$. Given an integer 
 $n \geq 1$,  the $n$th \textit{symbolic power} of $I$ is defined to be the ideal
 $I^{(n)} = \mathfrak{q}_1 \cap \cdots \cap \mathfrak{q}_r$,  where $\mathfrak{q}_i$  is the primary component of $I^n$ corresponding to $\mathfrak{p}_i$. 
 }
 \end{definition}


\begin{fact} (\cite[Exercise 6.1.25]{V1})\label{Ex.6.1.25}
 If  $\mathfrak{q}_1, \ldots, \mathfrak{q}_r$  are primary monomial ideals of $R$ with non-comparable radicals and $I$ 
is an ideal such that $I =\mathfrak{q}_1 \cap  \cdots \cap  \mathfrak{q}_r$, then $I^{(n)} =\mathfrak{q}^n_1 \cap  \cdots \cap  \mathfrak{q}^n_r$. 
\end{fact}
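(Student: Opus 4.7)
The plan is to prove the equality $I^{(n)} = \mathfrak{q}_1^n \cap \cdots \cap \mathfrak{q}_r^n$ by showing that the right-hand side is already in the primary-decomposition form demanded by Definition \ref{Def.4.3.22}, i.e.\ that $\mathfrak{q}_i^n$ is precisely the $\mathfrak{p}_i$-primary component of $I^n$, where $\mathfrak{p}_i = \sqrt{\mathfrak{q}_i}$. Since the $\mathfrak{p}_i$ are non-comparable and $I = \bigcap_i \mathfrak{q}_i$, the $\mathfrak{p}_i$ are exactly the minimal primes of $I$ (and of $I^n$), so this suffices.

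First I would record two small lemmas that I expect to use freely. (a) Each $\mathfrak{p}_i$ is a monomial prime, that is, $\mathfrak{p}_i$ is generated by a subset of the variables. (b) A $\mathfrak{p}_i$-primary monomial ideal has all of its minimal generators supported in the variables of $\mathfrak{p}_i$; in particular $\mathfrak{q}_i^n$ is again a $\mathfrak{p}_i$-primary monomial ideal, and its radical is still $\mathfrak{p}_i$. Thus $\mathfrak{q}_1^n \cap \cdots \cap \mathfrak{q}_r^n$ is itself an irredundant primary decomposition whose associated primes are exactly $\mathfrak{p}_1, \ldots, \mathfrak{p}_r$.

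Next I would compute the $\mathfrak{p}_i$-primary component of $I^n$ by localization. Fix $i$. For $j \neq i$, non-comparability gives a variable $x \in \mathfrak{p}_j \setminus \mathfrak{p}_i$. Since $\sqrt{\mathfrak{q}_j} = \mathfrak{p}_j$, some power $x^m$ lies in $\mathfrak{q}_j$, and $x^m$ is a unit in $R_{\mathfrak{p}_i}$, so $\mathfrak{q}_j R_{\mathfrak{p}_i} = R_{\mathfrak{p}_i}$. Consequently,
\[
I R_{\mathfrak{p}_i} \;=\; \bigcap_{j=1}^{r} \mathfrak{q}_j R_{\mathfrak{p}_i} \;=\; \mathfrak{q}_i R_{\mathfrak{p}_i},
\qquad\text{hence}\qquad I^n R_{\mathfrak{p}_i} \;=\; \mathfrak{q}_i^n R_{\mathfrak{p}_i}.
\]
Contracting back and using that $\mathfrak{q}_i^n$ is $\mathfrak{p}_i$-primary,
\[
I^n R_{\mathfrak{p}_i} \cap R \;=\; \mathfrak{q}_i^n R_{\mathfrak{p}_i} \cap R \;=\; \mathfrak{q}_i^n.
\]
The left-hand side is, by definition, the $\mathfrak{p}_i$-primary component of $I^n$, so intersecting over $i$ yields $I^{(n)} = \bigcap_i \mathfrak{q}_i^n$, as desired.

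The main obstacle, and really the only non-trivial point, is the careful justification that $\mathfrak{q}_j R_{\mathfrak{p}_i}$ is the unit ideal whenever $j \neq i$; once that is in hand the localization computation is routine. A minor pitfall to watch out for is that the symbolic power of Definition \ref{Def.4.3.22} only uses the components of $I^n$ at the \emph{minimal} primes of $I$, so one must confirm at the outset that non-comparability of the $\mathfrak{p}_i$ makes $\bigcap_i \mathfrak{q}_i$ an irredundant primary decomposition (otherwise some $\mathfrak{p}_i$ could fail to be minimal over $I$). Everything else is a clean application of contraction-extension for primary ideals.
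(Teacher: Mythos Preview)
The paper does not supply its own proof of this Fact; it is quoted as Exercise~6.1.25 from Villarreal's textbook and used as a black box throughout. Your localization argument is correct and is the standard route to the result: the key observations that each $\mathfrak{q}_i^n$ remains $\mathfrak{p}_i$-primary (because a monomial primary ideal is generated in the variables of its radical) and that $\mathfrak{q}_j R_{\mathfrak{p}_i}$ is the unit ideal for $j\neq i$ (via a variable in $\mathfrak{p}_j\setminus\mathfrak{p}_i$) are exactly what is needed, and the contraction $I^n R_{\mathfrak{p}_i}\cap R$ does recover the $\mathfrak{p}_i$-primary component of $I^n$ since $\mathfrak{p}_i$ is a minimal prime.
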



\begin{proposition}   (\cite[Proposition  2.1]{NQT})    \label{Pro.1}
Let $I\subset R=K[x_1, \ldots, x_n]$ be a  monomial ideal,  $t$ a positive integer, $\mathfrak{p}$ a prime monomial ideal in $R$,
  and $y_1, \ldots, y_s$ be distinct variables in $R$  such that, 
for each  $i=1, \ldots, s$, $\mathfrak{p}\setminus y_i \notin \mathrm{Ass}(R/(I\setminus y_i)^t)$, where $I\setminus y_i$ denotes the 
deletion  of $y_i$  from  $I$.  Then  $\mathfrak{p}\in \mathrm{Ass}(R/I^t)$ if and only if $\mathfrak{p}\in \mathrm{Ass}(R/(I^t:\prod_{i=1}^sy_i))$. 
\end{proposition}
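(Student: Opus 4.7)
The plan is to handle the two directions separately, with the deletion hypothesis playing a role only in one of them. For the implication $\mathfrak{p}\in \mathrm{Ass}(R/(I^t:\prod_{i=1}^sy_i))\Rightarrow \mathfrak{p}\in \mathrm{Ass}(R/I^t)$, no hypothesis is needed: if $\mathfrak{p}=((I^t:\prod_{i=1}^s y_i):w)$ for some monomial $w$, then the standard colon identity gives $\mathfrak{p}=(I^t:w\prod_{i=1}^s y_i)$, exhibiting $\mathfrak{p}$ as an associated prime of $I^t$.

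For the converse, I would fix a monomial $v$ with $\mathfrak{p}=(I^t:v)$ and aim to prove that $y_i\mid v$ for every $i$. Once this is in hand, writing $v=w\prod_{i=1}^s y_i$ yields $\mathfrak{p}=((I^t:\prod_{i=1}^s y_i):w)$, so $\mathfrak{p}\in\mathrm{Ass}(R/(I^t:\prod_{i=1}^s y_i))$, as desired. I would establish the divisibility claim by contradiction: assuming $y_i\nmid v$ for some index $i$, I would show that $\mathfrak{p}\setminus y_i\in\mathrm{Ass}(R/(I\setminus y_i)^t)$, contradicting the hypothesis.

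The crux is therefore the identification $((I\setminus y_i)^t:v)=\mathfrak{p}\setminus y_i$ under the assumption $y_i\nmid v$. The underlying observation is that a monomial $g\in I^t$ with $y_i\nmid g$ automatically lies in $(I\setminus y_i)^t$, because any factorization $g=u_1\cdots u_t\cdot h$ with $u_k\in\mathcal{G}(I)$ forces every $u_k$ to avoid $y_i$, hence to lie in $\mathcal{G}(I\setminus y_i)$. The inclusion $\mathfrak{p}\setminus y_i\subseteq ((I\setminus y_i)^t:v)$ then follows because each variable $x_j\in\mathfrak{p}$ with $x_j\neq y_i$ satisfies $x_jv\in I^t$ and $y_i\nmid x_jv$, hence $x_jv\in(I\setminus y_i)^t$. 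For the reverse inclusion I would inspect the minimal monomial generators of $((I\setminus y_i)^t:v)$: each has the form $g/\gcd(g,v)$ with $g\in\mathcal{G}((I\setminus y_i)^t)$, does not involve $y_i$, and multiplies $v$ into $I^t$, so it lies in $\mathfrak{p}$ and, being $y_i$-free, in $\mathfrak{p}\setminus y_i$. Since $v\notin I^t$ forces $v\notin(I\setminus y_i)^t$, the resulting ideal is a proper prime, completing the contradiction. The main obstacle I anticipate is precisely this colon computation---tracking which variables can occur in the generators and verifying both inclusions without mishandling the role of $y_i$.
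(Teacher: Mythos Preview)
The paper does not supply its own proof of this proposition; it is quoted verbatim from \cite{NQT} in the preliminaries section, so there is nothing in the paper to compare your argument against. That said, your outline is correct and follows the standard route for such results. The backward implication is immediate from the colon identity $((I^t:\prod y_i):w)=(I^t:w\prod y_i)$, and the forward implication reduces---exactly as you describe---to showing that every monomial witness $v$ with $\mathfrak{p}=(I^t:v)$ is divisible by each $y_i$. Your contradiction argument for this step is sound: the key observation that a monomial in $I^t$ not divisible by $y_i$ must lie in $(I\setminus y_i)^t$ is correct (any factorization into minimal generators of $I$ avoids $y_i$ entirely), and the two inclusions establishing $((I\setminus y_i)^t:v)=\mathfrak{p}\setminus y_i$ go through as you indicate. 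One very minor wording point: when you write that each $u_k$ lies in $\mathcal{G}(I\setminus y_i)$, this is in fact true (minimality is preserved because $I\setminus y_i\subseteq I$), but for the argument you only need $u_k\in I\setminus y_i$, which is immediate.
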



\begin{proposition}     (\cite[Proposition  2.2]{NQT})    \label{Pro.2}
  Let   $I \subset R$ be a   monomial ideal, $\ell$ a positive integer, $v$ a square-free monomial in $R$ with $v\in I^\ell$,  and  $\mathfrak{p}$ a prime monomial ideal  such that  $\mathfrak{p}\setminus x_i \notin \mathrm{Ass}(R/(I\setminus x_i)^s)$ for all   $x_i\in \mathrm{supp}(v)$ and some positive integer $s$.  If $\mathfrak{p}\in \mathrm{Ass}(R/I^s)$, then $s>\ell$. 
 \end{proposition}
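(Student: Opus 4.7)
My plan is to argue by contradiction, with Proposition~\ref{Pro.1} serving as the main engine. Suppose, aiming at a contradiction, that $s \leq \ell$; I will deduce $\mathfrak{p} \notin \mathrm{Ass}(R/I^s)$, contradicting the hypothesis.

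The first step is purely about powers of ideals: since $s \leq \ell$ implies $I^\ell \subseteq I^s$, the hypothesis $v \in I^\ell$ forces $v \in I^s$. Hence $(I^s :_R v) = R$, and therefore
\[
\mathrm{Ass}\bigl(R/(I^s :_R v)\bigr) = \emptyset.
\]

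The second step is to invoke Proposition~\ref{Pro.1} with $t = s$. Because $v$ is a square-free monomial in $R$, I can write $v = y_1 y_2 \cdots y_r$, where $y_1, \ldots, y_r$ are exactly the distinct variables constituting $\mathrm{supp}(v)$. The standing hypothesis $\mathfrak{p}\setminus x_i \notin \mathrm{Ass}(R/(I\setminus x_i)^s)$ for every $x_i \in \mathrm{supp}(v)$ is precisely the input required by Proposition~\ref{Pro.1}. That proposition then yields the equivalence
\[
\mathfrak{p} \in \mathrm{Ass}(R/I^s) \;\Longleftrightarrow\; \mathfrak{p} \in \mathrm{Ass}\bigl(R/(I^s :_R \textstyle\prod_{i=1}^r y_i)\bigr) = \mathrm{Ass}\bigl(R/(I^s :_R v)\bigr),
\]
and since the right-hand side was just shown to be empty, $\mathfrak{p} \notin \mathrm{Ass}(R/I^s)$, contradicting the assumption. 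Therefore $s > \ell$, as desired.

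I do not anticipate a genuine obstacle: the proof is essentially a one-step reduction to Proposition~\ref{Pro.1}. The only point requiring momentary thought is recognizing that the square-free monomial $v$ appearing in the statement is precisely the product $\prod_{i=1}^r y_i$ of distinct variables that Proposition~\ref{Pro.1} takes as input. Once that identification is made, both the containment $v \in I^\ell \subseteq I^s$ under the contradictory assumption $s \leq \ell$ and the resulting emptiness of $\mathrm{Ass}(R/(I^s:_R v))$ are immediate, and the argument terminates.
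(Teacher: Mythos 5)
Your proof is correct. The paper itself does not reproduce an argument for this proposition (it is quoted from \cite{NQT}), but your reduction is exactly the natural one: with $t=s$ and the distinct variables of $\mathrm{supp}(v)$ as the $y_i$'s, Proposition~\ref{Pro.1} applies verbatim, and the assumption $s\leq \ell$ gives $v\in I^{\ell}\subseteq I^{s}$, hence $(I^{s}:_R v)=R$ and $\mathrm{Ass}(R/(I^{s}:_R v))=\emptyset$, contradicting $\mathfrak{p}\in\mathrm{Ass}(R/I^{s})$.
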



\begin{theorem} (\cite[Theorem 14.3.6]{V1})  \label{Villarreal1}
 Let $\mathcal{C}$  be a clutter and let  $A$ be its incidence matrix. The following are equivalent:
 \begin{itemize}
 \item[(i)] $\mathrm{gr}_I(R)$ is reduced, where $I=I(\mathcal{C})$ is the edge ideal of $\mathcal{C}$. 
 \item[(ii)] $R[It]$ is normal  and $\mathcal{Q}(A)$ is an integral polyhedron. 
 \item[(iii)]  $x\geq 0$; $xA\geq \textbf{1}$   is a $\mathrm{TDI}$ system. 
 \item[(iv)]  $\mathcal{C}$  has the max-flow min-cut (MFMC) property.
 \item[(v)]  $I^i = I^{(i)}$ for $i \geq 1$. 
 \item[(vi)]  $I$ is normally torsion-free, i.e.,  $\mathrm{Ass}_R(R/I^i) \subseteq  \mathrm{Ass}_R(R/I)$ for $i\geq 1$. 
 \item[(vii)]  $\mathcal{C}$  is Mengerian, i.e., $\beta_1({\mathcal{C}}^a) = \alpha_0({\mathcal{C}}^a)$ for all  $a \in \mathbb{N}^n$.
 \end{itemize}
\end{theorem}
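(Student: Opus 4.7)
The plan is to prove the equivalences as a cycle linking three different worlds: commutative algebra (the associated graded ring, symbolic powers, associated primes), the geometry of the Rees algebra (integrality of $\mathcal{Q}(A)$, normality of $R[It]$), and combinatorial optimization (TDI systems, MFMC, Mengerian equality). Rather than prove each implication directly, I would group the conditions into three clusters and establish bridges between them.

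First, I would handle the cluster $\{(\mathrm{i}),(\mathrm{v}),(\mathrm{vi})\}$ on the commutative algebra side. The equivalence $(\mathrm{v}) \Leftrightarrow (\mathrm{vi})$ follows directly from primary decomposition for square-free monomial ideals: the minimal primes are the covers $\mathfrak{p}_1,\ldots,\mathfrak{p}_r$ and by Fact~\ref{Ex.6.1.25} the symbolic power is $I^{(i)} = \mathfrak{p}_1^i \cap \cdots \cap \mathfrak{p}_r^i$, so $I^i = I^{(i)}$ means $I^i$ has no embedded primes for every $i$, which is exactly normally torsion-freeness. For $(\mathrm{i}) \Leftrightarrow (\mathrm{v})$, I would use the classical result that $\mathrm{gr}_I(R)$ is reduced iff the Rees algebra $R[It]$ is isomorphic to the symbolic Rees algebra $\bigoplus_i I^{(i)}t^i$ in each graded piece, i.e., $I^i = I^{(i)}$ for all $i$.

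Second, I would move to the polyhedral cluster $\{(\mathrm{ii}),(\mathrm{iii})\}$. The identity $(\mathrm{ii}) \Leftrightarrow (\mathrm{v})$ comes from the well-known description of the integral closure $\overline{I^i}$ in terms of lattice points in the $i$-th dilate of the Newton polyhedron $\mathcal{Q}(A)$; normality of $R[It]$ says $\overline{I^i} = I^i$ for all $i$, and this in turn matches $I^i = I^{(i)}$ precisely when $\mathcal{Q}(A)$ is an integral polyhedron. For $(\mathrm{ii}) \Leftrightarrow (\mathrm{iii})$, I would invoke the Edmonds--Giles theorem: a rational system $x \geq 0,\; xA \geq \mathbf{1}$ is TDI iff the polyhedron it defines is integral, which is precisely integrality of $\mathcal{Q}(A)$.

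Finally, I would close the loop with the combinatorial cluster $\{(\mathrm{iv}),(\mathrm{vii})\}$. The equivalence $(\mathrm{iii}) \Leftrightarrow (\mathrm{iv})$ is the LP-duality translation of TDI: total dual integrality of the covering system is by definition the max-flow min-cut property for $\mathcal{C}$. The equivalence $(\mathrm{iv}) \Leftrightarrow (\mathrm{vii})$ is just the reformulation of MFMC as the hypergraph min--max equality $\beta_1(\mathcal{C}^a) = \alpha_0(\mathcal{C}^a)$ for every weight vector $a \in \mathbb{N}^n$, which is the definition of Mengerian.

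The main obstacle, and the part that genuinely requires care, is the bridge $(\mathrm{ii}) \Leftrightarrow (\mathrm{v})$ connecting Rees-algebra normality plus polyhedral integrality to symbolic-power equality. Normality alone only gives $\overline{I^i}=I^i$, and symbolic-power equality alone corresponds to integrality of $\mathcal{Q}(A)$ only through a careful identification of $I^{(i)}$ with the monomials whose exponent vectors lie in $i\cdot \mathcal{Q}(A) \cap \mathbb{Z}^n$; conflating these two halves is where one is most likely to slip. Once that bridge is solidly built, the three clusters fuse and all seven conditions become equivalent.
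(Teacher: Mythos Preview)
The paper does not prove this theorem at all: it is stated in the Preliminaries section with the citation \cite[Theorem 14.3.6]{V1} and is used only as a black box throughout (for instance, in the proofs of Proposition~\ref{Bipartite} and Theorem~\ref{Linear-Combination}). So there is no ``paper's own proof'' to compare your proposal against.

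That said, your sketch has real gaps that would need repair before it could stand on its own. The most serious is your claimed equivalence $(\mathrm{ii}) \Leftrightarrow (\mathrm{iii})$ via Edmonds--Giles: that theorem only gives one direction (a TDI system defines an integral polyhedron), and the converse is false in general. Condition $(\mathrm{ii})$ has \emph{two} ingredients, normality of $R[It]$ and integrality of $\mathcal{Q}(A)$, and you never explain how normality enters to upgrade integrality back to TDI. A second gap is your handling of $(\mathrm{i}) \Leftrightarrow (\mathrm{v})$: the assertion that $\mathrm{gr}_I(R)$ is reduced iff $I^i = I^{(i)}$ for all $i$ is correct in this square-free monomial setting, but the justification you give (``$R[It]$ is isomorphic to the symbolic Rees algebra in each graded piece'') is circular and does not invoke the actual mechanism, which in Villarreal's treatment goes through Hochster's criterion for normality of semigroup rings together with a careful analysis of when the associated graded ring is a domain or reduced. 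If you want a self-contained proof, you should consult Chapter~14 of \cite{V1} directly; the argument there weaves through Ehrhart-type descriptions of $\overline{I^i}$ and $I^{(i)}$ and is considerably more intricate than your outline suggests.
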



\begin{lemma}(\cite[Lemma 2.1]{KHN2})   \label{Kaplansky}
Let $S$ be a commutative ring and let $a_1,\ldots,a_m$ be elements constituting a permutable  $S$-sequence. Let $J$
 be an ideal generated by monomials in $a_{t+1},\ldots,a_m$  for some $t\in\mathbb{N}$ with $1\leq t \leq m-1$. Then $(J:_S a_1^{n_1} a_2^{n_2}
 \cdots a_t^{n_t})=J$ for all $n_1,n_2,\ldots,n_t\in\mathbb{N}$.
\end{lemma}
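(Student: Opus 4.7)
The plan is to reduce the identity $(J:_S a_1^{n_1}\cdots a_t^{n_t})=J$ to the single assertion that each $a_i$, for $1\le i\le t$, is a non-zero-divisor on $S/J$. Granting this, each $a_i^{n_i}$ is a non-zero-divisor on $S/J$ as well, and products of non-zero-divisors are non-zero-divisors, so $a_1^{n_1}\cdots a_t^{n_t}$ is a non-zero-divisor on $S/J$ --- which is precisely the desired equality. By the permutability of $a_1,\ldots,a_m$ on $S$, swapping $a_1$ with any $a_i$ with $i\le t$ preserves both the hypothesis on $J$ and the conclusion; hence it suffices to treat the case $i=1$, i.e.\ to prove $(J:_S a_1)=J$.

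Writing $J=(u_1,\ldots,u_r)$ with each $u_k$ a monomial in $a_{t+1},\ldots,a_m$, I would argue by induction on $r$. The base case $r=1$ is the concrete heart of the argument. Here $J=(u)$ with $u=a_{t+1}^{k_{t+1}}\cdots a_m^{k_m}$. Starting from a relation $ya_1=uz$, one uses that $a_j^{k_j},a_1$ is a regular sequence (by permutability of the original sequence plus the stability of regular sequences under taking powers) to extract successively the factors $a_{t+1}^{k_{t+1}},\ldots,a_m^{k_m}$ from $y$, cancelling each non-zero-divisor factor before moving to the next step, and finally concluding $y\in(u)=J$.

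The main obstacle is the passage from $r=1$ to general $r$: a naive splitting $J=J'+(u_r)$ leads to a circular reduction, because the resulting equation modulo $J'$ amounts to the statement one is trying to prove. To circumvent this, I would instead decompose $J$ as an intersection of monomial ideals of the form $(a_{j_1}^{e_1},\ldots,a_{j_p}^{e_p})$ with $j_1,\ldots,j_p\in\{t+1,\ldots,m\}$, the analogue for a permutable regular sequence of the standard irreducible decomposition of monomial ideals in a polynomial ring. For each such component $\mathfrak{q}_\ell$, the colon $(\mathfrak{q}_\ell :_S a_1)$ equals $\mathfrak{q}_\ell$, since $a_1,a_{j_1}^{e_1},\ldots,a_{j_p}^{e_p}$ is a regular sequence and hence $a_1$ is a non-zero-divisor modulo $\mathfrak{q}_\ell$; because colon distributes over intersection, $(J:_S a_1)=\bigcap_\ell(\mathfrak{q}_\ell :_S a_1)=\bigcap_\ell \mathfrak{q}_\ell =J$. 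Justifying the existence of this irreducible decomposition inside an abstract $S$ (rather than a polynomial ring) is the subtle point, but it follows from the quasi-regularity of the permutable sequence, which ensures that monomial relations among $a_{t+1},\ldots,a_m$ in $S$ mirror those in a polynomial ring.
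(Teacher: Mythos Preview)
The paper does not prove this lemma: it is listed in Section~2 among the preliminaries, quoted verbatim from \cite[Lemma~2.1]{KHN2}, and later invoked in the proof of Proposition~\ref{Pro.6} without any argument supplied. There is therefore no proof in this paper against which to compare your attempt.

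On the merits of your proposal itself: the reduction to showing that $a_1$ is a non-zero-divisor on $S/J$ is correct, and your treatment of the component ideals $\mathfrak{q}_\ell=(a_{j_1}^{e_1},\ldots,a_{j_p}^{e_p})$ is fine, since $a_{j_1}^{e_1},\ldots,a_{j_p}^{e_p},a_1$ is again a permutable regular sequence and hence $a_1$ is a non-zero-divisor modulo $\mathfrak{q}_\ell$. The real gap is exactly where you locate it, namely the equality $J=\bigcap_\ell\mathfrak{q}_\ell$ \emph{as ideals of $S$}. Your appeal to quasi-regularity is not quite enough as stated: quasi-regularity says that $\mathrm{gr}_I(S)$ is polynomial over $S/I$ for $I=(a_{t+1},\ldots,a_m)$, which governs $I$-adic leading terms, but it does not directly guarantee that an intersection of monomial-type ideals computed in $S$ agrees with the ideal generated by the monomials one would obtain from the corresponding polynomial-ring computation. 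That identification is a genuine lemma in its own right (provable, e.g., by inductively verifying identities such as $(a_i^e)\cap(a_j^f)=(a_i^ea_j^f)$ for $i\ne j$, which does follow from permutability), and it carries most of the weight of the argument. Note also that your ``base case $r=1$'' plays no role once you switch to the decomposition route, since the components $\mathfrak{q}_\ell$ are not principal; the induction-on-$r$ scaffolding is vestigial and could be dropped.
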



 \begin{fact} \label{fact1} (\cite[Exercise 6.1.23]{V1}) 
If $I$, $J$, $L$ are monomial ideals, then  the following equalities hold:
\begin{itemize}
\item[(i)]  $I\cap (J+L)=(I \cap J) + (I \cap L).$
\item[(ii)] $I+ (J \cap L)= (I+J) \cap (I+L).$
\end{itemize}
\end{fact}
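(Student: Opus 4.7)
The plan is to prove both identities by reducing them to the observation that a monomial ideal is uniquely determined by the set of monomials it contains, so each inclusion can be checked one monomial at a time. The main ingredient is the pair of membership rules
\begin{align*}
m \in A \cap B &\;\Longleftrightarrow\; m \in A \text{ and } m \in B, \\
m \in A + B &\;\Longleftrightarrow\; m \in A \text{ or } m \in B,
\end{align*}
valid for arbitrary monomial ideals $A$, $B$ of $R$ and any monomial $m$. The first is automatic; the second uses the inclusion $\mathcal{G}(A+B) \subseteq \mathcal{G}(A) \cup \mathcal{G}(B)$ together with the fact that a monomial lies in a monomial ideal exactly when it is divisible by some element of its minimal generating set.

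For part (i) I would fix a monomial $m$ and chain the two rules: $m \in I \cap (J+L)$ iff $m \in I$ and ($m \in J$ or $m \in L$); applying distributivity of $\wedge$ over $\vee$ gives ($m \in I$ and $m \in J$) or ($m \in I$ and $m \in L$), which is exactly $m \in (I\cap J) + (I \cap L)$. Part (ii) is symmetric, using distributivity of $\vee$ over $\wedge$: $m \in I + (J \cap L)$ iff $m \in I$ or ($m \in J$ and $m \in L$), iff ($m \in I$ or $m \in J$) and ($m \in I$ or $m \in L$), iff $m \in (I+J) \cap (I+L)$. Each step is biconditional, so the equalities of monomial sets, and hence of the monomial ideals they generate, follow immediately.

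The main conceptual point, and the reason the statement is worth isolating, is that identity (ii) is \emph{false} for general ideals; easy examples with principal ideals in $K[x,y]$ (e.g.\ $I=(x)$, $J=(y)$, $L=(x+y)$) exhibit the failure of $(I+J) \cap (I+L) \subseteq I + (J \cap L)$. Hence one cannot simply quote an abstract lattice identity: the argument genuinely relies on the disjunctive membership rule for sums, which holds only in the monomial setting. Once that rule is in place, the only remaining step is propositional distributivity, so the real obstacle is recognizing that the reduction to monomials is legitimate rather than carrying out any further computation.
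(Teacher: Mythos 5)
Your proof is correct: the disjunctive membership rule for sums of monomial ideals plus propositional distributivity is exactly the standard argument, and your counterexample showing (ii) fails for non-monomial ideals is valid ($I+(J\cap L)=(x,y^2)\subsetneq (x,y)=(I+J)\cap(I+L)$). The paper itself gives no proof — it quotes this as Exercise 6.1.23 of Villarreal's book — and your argument is essentially the intended solution of that exercise, so there is nothing to compare beyond noting agreement.
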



\begin{theorem} (\cite[Theorem 2.5]{SN})  \label{NTF.Th.2.5}
Let $I$ be  a  monomial ideal  in  $R=K[x_1, \ldots, x_n]$ such that 
$I=I_1R + I_2R$, where
 $\mathcal{G}(I_1) \subset R_1=K[x_1, \ldots, x_m]$ and $\mathcal{G}(I_2) \subset R_2=K[x_{m+1}, \ldots, x_n]$ for some  positive integer $m$. If $I_1$ and   $I_2$  are normally torsion-free, then  $I$  is so.
 \end{theorem}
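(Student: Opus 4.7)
The plan is to use Theorem \ref{Villarreal1} to rephrase normally torsion-freeness as the equality $I^s = I^{(s)}$ for every $s \geq 1$, and then verify this equality by exploiting the disjoint variable supports of $I_1$ and $I_2$.

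First I enumerate $\mathrm{Min}(I_1) = \{P_1, \ldots, P_r\}$ inside $R_1$ and $\mathrm{Min}(I_2) = \{Q_1, \ldots, Q_t\}$ inside $R_2$. Repeated application of Fact \ref{fact1}(ii) gives
\[
I \;=\; \Bigl(\bigcap_{i=1}^r P_i\Bigr) + \Bigl(\bigcap_{j=1}^t Q_j\Bigr) \;=\; \bigcap_{i,j}\bigl(P_i + Q_j\bigr),
\]
and the primes $P_i + Q_j$ are pairwise incomparable (which is visible on their disjoint $R_1$- and $R_2$-components), so Fact \ref{Ex.6.1.25} yields $I^{(s)} = \bigcap_{i,j}(P_i + Q_j)^s$ for every $s \geq 1$. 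The task thus reduces to checking $I^s = \bigcap_{i,j}(P_i + Q_j)^s$. Every monomial $u \in R$ factors uniquely as $u = u_1 u_2$ with $u_1 \in R_1$ and $u_2 \in R_2$; an exponent count then gives $u \in (P+Q)^s$ iff $\deg_P(u_1) + \deg_Q(u_2) \geq s$, while expanding $I^s = \sum_{k+l=s} I_1^k I_2^l$ and again using disjoint supports gives $u \in I_1^k I_2^l$ iff $u_1 \in I_1^k$ and $u_2 \in I_2^l$. Consequently $u \in I^s$ iff $a(u_1) + b(u_2) \geq s$, where $a(u_1) := \max\{k : u_1 \in I_1^k\}$ and $b(u_2) := \max\{l : u_2 \in I_2^l\}$.

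Finally, Theorem \ref{Villarreal1} applied to the normally torsion-free ideals $I_1$ and $I_2$ gives $I_1^k = \bigcap_i P_i^k$ and $I_2^l = \bigcap_j Q_j^l$, so $a(u_1) = \min_i \deg_{P_i}(u_1)$ and $b(u_2) = \min_j \deg_{Q_j}(u_2)$. The elementary identity $\min_{i,j}\bigl(\deg_{P_i}(u_1) + \deg_{Q_j}(u_2)\bigr) = \min_i \deg_{P_i}(u_1) + \min_j \deg_{Q_j}(u_2)$ then identifies the condition $a(u_1)+b(u_2) \geq s$ characterizing $I^s$ with the condition $\deg_{P_i}(u_1)+\deg_{Q_j}(u_2) \geq s$ for all $(i,j)$ characterizing $I^{(s)}$, proving $I^s = I^{(s)}$ and, by Theorem \ref{Villarreal1} again, that $I$ is normally torsion-free. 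The main obstacle is the bookkeeping in the monomial-membership step, particularly verifying $u \in I_1^k I_2^l \Leftrightarrow u_1 \in I_1^k$ and $u_2 \in I_2^l$; once the factorization $u = u_1 u_2$ is set up, the splitting of the minimum into independent coordinates makes the final identification essentially formal.
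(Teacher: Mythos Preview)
The paper does not itself prove this theorem; it is quoted from \cite{SN} and used as a tool, so there is no in-paper argument to compare against. That said, your proposal has a genuine gap relative to the stated hypothesis.

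The statement is for arbitrary monomial ideals, but your argument tacitly assumes $I_1$ and $I_2$ are square-free. Two steps fail otherwise. First, you write $I_1 = \bigcap_i P_i$ with $P_i \in \mathrm{Min}(I_1)$; this equals $\sqrt{I_1}$, not $I_1$, unless $I_1$ is radical. Second, you invoke Theorem~\ref{Villarreal1} to convert ``normally torsion-free'' into $I_1^k = I_1^{(k)} = \bigcap_i P_i^k$, but that theorem is stated for edge ideals of clutters, i.e.\ square-free monomial ideals. For a non-radical example such as $I_1 = (x_1^2, x_1x_2)$, which is normally torsion-free (every associated prime of any power lies in $\{(x_1),(x_1,x_2)\} = \mathrm{Ass}(R_1/I_1)$), one has $\mathrm{Min}(I_1) = \{(x_1)\}$ yet $I_1 \neq (x_1)$, so your decomposition of $I$ and the formula $a(u_1) = \min_i \deg_{P_i}(u_1)$ both break down.

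If you restrict to the square-free case your outline is correct and clean: the identification of $\mathrm{Min}(I)$ with the $P_i+Q_j$, the symbolic-power formula from Fact~\ref{Ex.6.1.25}, and the $\min$-splitting argument all go through. To cover the general statement you would instead need to work directly with associated primes, for instance via the known description of $\mathrm{Ass}_R\bigl(R/(I_1+I_2)^s\bigr)$ in terms of $\mathrm{Ass}_{R_1}(R_1/I_1^a)$ and $\mathrm{Ass}_{R_2}(R_2/I_2^b)$ for suitable $a,b$, which does not require radicality.
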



\begin{lemma} (\cite[Lemma 3.12]{SN})  \label{NTF.Lem.3.12}
Let  $I$ be a monomial ideal in a polynomial ring $R=K[x_1, \ldots, x_n]$ with 
 $\mathcal{G}(I)=\{u_1, \ldots, u_m\}$, and $h=x_{j_1}^{b_1}\cdots x_{j_s}^{b_s}$ with $j_1, \ldots, j_s \in \{1, \ldots, n\}$ be a monomial in $R$. 
 Then  $I$ is normally torsion-free  if and only if $hI$ is normally torsion-free. 
\end{lemma}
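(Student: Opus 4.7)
The plan is to reduce the lemma to a single identity about how multiplication by a monomial perturbs the set of associated primes, namely
\[
\mathrm{Ass}_R(R/hJ) \;=\; \mathrm{Ass}_R(R/J) \,\cup\, \bigl\{(x_{j_i}) : 1\le i\le s\bigr\}
\]
for every monomial ideal $J \subset R$ and every monomial $h$ with $\mathrm{supp}(h) = \{x_{j_1},\ldots,x_{j_s}\}$. Call this the \emph{Key Identity}. Since $(hI)^t = h^t I^t$ and $\mathrm{supp}(h^t) = \mathrm{supp}(h)$, applying the Key Identity with $J = I^t$ (and $h$ replaced by $h^t$) yields
\[
\mathrm{Ass}_R(R/(hI)^t) = \mathrm{Ass}_R(R/I^t) \cup T, \qquad \mathrm{Ass}_R(R/hI) = \mathrm{Ass}_R(R/I) \cup T,
\]
where $T := \{(x_{j_1}),\ldots,(x_{j_s})\}$, and the biconditional in the lemma reduces to a short set-theoretic argument.

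To prove the Key Identity I would induct on $\deg(h)$, the case $h = 1$ being trivial. In the inductive step, write $h = x_j h'$ with $x_j \in \mathrm{supp}(h)$ and set $J' := h'J$, so that $hJ = x_j J'$. Since $x_j$ is a non-zerodivisor on $R$, multiplication by $x_j$ yields the short exact sequence
\[
0 \longrightarrow R/J' \xrightarrow{\;\cdot x_j\;} R/x_j J' \longrightarrow R/(x_j) \longrightarrow 0.
\]
The standard associated-primes lemma for short exact sequences gives $\mathrm{Ass}_R(R/x_j J') \subseteq \mathrm{Ass}_R(R/J') \cup \{(x_j)\}$, while viewing $R/J'$ as a submodule of $R/x_j J'$ supplies the reverse containment $\mathrm{Ass}_R(R/J') \subseteq \mathrm{Ass}_R(R/x_j J')$. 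The remaining point is $(x_j) \in \mathrm{Ass}_R(R/x_j J')$; for this I would factor out the largest $x_j$-power from $J$, producing a monomial ideal $\widetilde J$ with a minimal generator $\widetilde u$ satisfying $x_j \nmid \widetilde u$, and verify via the usual monomial colon formula that the element $u := (h/x_j)\,\widetilde u$ satisfies $(hJ : u) = (x_j)$ with $u \notin hJ$. Combining these containments and using the inductive hypothesis $\mathrm{Ass}_R(R/J') = \mathrm{Ass}_R(R/J) \cup \{(x_{j_i}) : x_{j_i} \in \mathrm{supp}(h')\}$ closes the induction.

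With the Key Identity in hand, the forward direction of the lemma is immediate: if $I$ is normally torsion-free then $\mathrm{Ass}_R(R/I^t) \subseteq \mathrm{Ass}_R(R/I)$ for every $t$, and unioning both sides with $T$ gives $\mathrm{Ass}_R(R/(hI)^t) \subseteq \mathrm{Ass}_R(R/hI)$. For the converse, suppose $hI$ is normally torsion-free and fix $P \in \mathrm{Ass}_R(R/I^t)$; then $P \in \mathrm{Ass}_R(R/(hI)^t) \subseteq \mathrm{Ass}_R(R/hI) = \mathrm{Ass}_R(R/I) \cup T$. If $P \in \mathrm{Ass}_R(R/I)$ we are done; otherwise $P = (x_{j_i})$ for some $i$, a height-one prime. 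The only prime strictly contained in $(x_{j_i})$ is $(0)$, which is never associated to $R/I^t$ provided $I \neq 0$, so $P$ cannot be an embedded prime of $I^t$. Hence $P$ is a minimal prime of $I^t$, hence of $I$, and therefore $P \in \mathrm{Ass}_R(R/I)$. The principal obstacle is the verification that $(x_j) \in \mathrm{Ass}_R(R/x_j J')$ in the inductive step; one must handle the possibility that $J'$ is itself divisible by a power of $x_j$, which is exactly why the ``peel off $x_j^k$ first'' device is needed to produce the colon witness.
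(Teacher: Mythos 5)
The paper does not prove this lemma at all: it is imported verbatim from \cite[Lemma 3.12]{SN}, so there is no internal proof to compare against, and your argument has to stand on its own. It essentially does, and it follows the standard route: the whole content is the identity $\mathrm{Ass}_R(R/hJ)=\mathrm{Ass}_R(R/J)\cup\{(x_{j_i}):x_{j_i}\in\mathrm{supp}(h)\}$ for a monomial ideal $J$, applied to $J=I^t$ with $h^t$ in place of $h$ (harmless, since $\mathrm{supp}(h^t)=\mathrm{supp}(h)$). Your two containments via the exact sequence $0\to R/J'\xrightarrow{\cdot x_j}R/x_jJ'\to R/(x_j)\to 0$ are correct, the induction on $\deg h$ closes, and the converse direction's disposal of the case $P=(x_{j_i})$ (a height-one prime containing $I^t$ must be minimal over $I^t$, hence over $I$, hence lies in $\mathrm{Ass}_R(R/I)$) is sound.

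The one place where the written details fail is the colon witness for $(x_j)\in\mathrm{Ass}_R(R/x_jJ')$. Writing $J=x_j^k\widetilde J$ with $x_j\nmid\widetilde u$ for some $\widetilde u\in\mathcal{G}(\widetilde J)$, your element $u=(h/x_j)\widetilde u$ gives $(hJ:u)=(x_j^{k+1}\widetilde J:\widetilde u)$, which is not $(x_j)$ once $k\geq 1$ (e.g.\ $J=(x_j)$, $\widetilde u=1$ gives $(x_j^{2})$, and $x_j$ itself is not in the colon). The fix is exactly the device you allude to but did not carry into the formula: take $u=(h/x_j)x_j^k\widetilde u$, i.e.\ $u=(h/x_j)w$ where $w\in\mathcal{G}(J)$ has minimal $x_j$-degree; then $(hJ:u)=(x_j\widetilde J:\widetilde u)=(x_j)$ and $u\notin hJ$ since $x_j\nmid\widetilde u$. (Alternatively, one can avoid the witness altogether by localizing at $(x_j)$: in the discrete valuation ring $R_{(x_j)}$ the ideal $hJ$ becomes $(x_j^{b+c})$ with $b=\deg_{x_j}h\geq 1$, so $(x_j)$ is automatically associated.) With that correction the proof is complete and, as far as the cited source goes, is essentially the same argument as the one in \cite{SN}.
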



\begin{theorem} (\cite[Theorem 3.19]{SN})    \label{NTF.Th.3.19}
 Let $I$ be  a    monomial ideal in  $R=K[x_1, \ldots, x_n]$, and  
   $1\leq j \leq n$.  If $I$  is normally torsion-free, then    $I/x_j$ is so. 
 \end{theorem}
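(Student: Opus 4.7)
The strategy is to reduce the statement for $I/x_j$ in $R' = K[x_i : i \neq j]$ to the hypothesis on $I$ in $R$ via the $K$-algebra homomorphism $\phi \colon R \to R'$ with $\phi(x_j) = 1$ and $\phi(x_i) = x_i$ for $i \neq j$. Since $\phi$ is a ring map, $\phi(I) = I/x_j$ and $\phi(I^s) = (I/x_j)^s$ for every $s \geq 1$, so the problem becomes one of comparing associated primes across $\phi$.

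The key technical step is the identity
\begin{equation*}
\mathrm{Ass}_{R'}\bigl(R'/(J/x_j)\bigr) \;=\; \{\, \mathfrak{p} \cap R' : \mathfrak{p} \in \mathrm{Ass}_R(R/J),\ x_j \notin \mathfrak{p} \,\},
\end{equation*}
valid for every monomial ideal $J \subset R$. To prove it, I would localize at $x_j$: since every power of $x_j$ becomes a unit in $R[x_j^{-1}]$, one has $J R[x_j^{-1}] = (J/x_j)\,R[x_j^{-1}]$. Now compute $\mathrm{Ass}_{R[x_j^{-1}]}(R[x_j^{-1}]/JR[x_j^{-1}])$ in two ways: first, via the localization $R \hookrightarrow R[x_j^{-1}]$, yielding $\{\mathfrak{p} R[x_j^{-1}] : \mathfrak{p} \in \mathrm{Ass}_R(R/J),\ x_j \notin \mathfrak{p}\}$; second, via the identification $R[x_j^{-1}] \cong R'[x_j, x_j^{-1}]$ combined with the standard polynomial-extension formula $\mathrm{Ass}_{A[t]}(A[t]/J'A[t]) = \{\mathfrak{p}' A[t] : \mathfrak{p}' \in \mathrm{Ass}_A(A/J')\}$ (flatness) followed by inverting $x_j$, yielding $\{\mathfrak{p}' R[x_j^{-1}] : \mathfrak{p}' \in \mathrm{Ass}_{R'}(R'/(J/x_j))\}$. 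Equating the two sets and contracting back to $R'$ gives the claimed identity, the bijection being $\mathfrak{p} \leftrightarrow \mathfrak{p} \cap R'$ (note that for monomial primes $\mathfrak{p} = (x_i : i \in A)$ with $j \notin A$, the contraction is just $(x_i : i \in A)$ viewed in $R'$).

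With this identity in hand, the conclusion is immediate: applying it with $J = I^s$ and with $J = I$, the hypothesis $\mathrm{Ass}_R(R/I^s) \subseteq \mathrm{Ass}_R(R/I)$ of normal torsion-freeness translates, after restricting to primes not containing $x_j$, into
\begin{equation*}
\mathrm{Ass}_{R'}\bigl(R'/(I/x_j)^s\bigr) \;\subseteq\; \mathrm{Ass}_{R'}\bigl(R'/(I/x_j)\bigr),
\end{equation*}
which is exactly the statement that $I/x_j$ is normally torsion-free. The main obstacle is the clean execution of the technical step — in particular, combining the localization and Laurent-polynomial extension correspondences for associated primes so that the two descriptions of $\mathrm{Ass}_{R[x_j^{-1}]}(R[x_j^{-1}]/JR[x_j^{-1}])$ agree under $\mathfrak{p} \leftrightarrow \mathfrak{p} \cap R'$; once this bookkeeping is carried out, no further difficulty arises.
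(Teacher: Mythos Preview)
The paper does not supply its own proof of this statement; it is quoted as a preliminary from \cite{SN} and invoked as a black box (notably in the proof of Theorem~\ref{Linear-Combination}), so there is no in-paper argument to compare against. Your localization approach is correct and is a standard route to the result: the identification $R[x_j^{-1}] \cong R'[x_j, x_j^{-1}]$ together with the two computations of associated primes---once via localization of $R$ at the powers of $x_j$, once via flat Laurent-polynomial extension from $R'$---does give the bijection $\mathrm{Ass}_{R'}(R'/(J/x_j)) \leftrightarrow \{\mathfrak{p} \in \mathrm{Ass}_R(R/J) : x_j \notin \mathfrak{p}\}$, and applying it with $J = I$ and $J = I^s$ (using $(I^s)/x_j = (I/x_j)^s$) immediately yields the conclusion.
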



 \begin{theorem} (\cite[Theorem 3.21]{SN})   \label{NTF.Th.3.21}
 Let $I$ be  a  square-free  monomial ideal in  $R=K[x_1, \ldots, x_n]$, and  
  $1\leq j \leq n$.  If $I$  is normally torsion-free, then    $I\setminus x_j$ is so. 
 \end{theorem}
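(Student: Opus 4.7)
The strategy is to invoke the equivalence $(v)\Leftrightarrow(vi)$ of Theorem~\ref{Villarreal1}, which reduces the problem to showing $J^s=J^{(s)}$ for every $s\geq 1$, where $J:=I\setminus x_j$. The inclusion $J^s\subseteq J^{(s)}$ is automatic for monomial ideals, so I focus on the reverse inclusion. Every minimal prime of $J$ is a monomial prime generated by a subset of the variables appearing in some $u\in\mathcal{G}(J)$, and since no such $u$ involves $x_j$, I have $x_j\notin\mathfrak{p}$ for all $\mathfrak{p}\in\mathrm{Min}(J)$. By Fact~\ref{Ex.6.1.25}, $J^{(s)}=\bigcap_{\mathfrak{p}\in\mathrm{Min}(J)}\mathfrak{p}^s$ is generated by monomials not involving $x_j$, so I may restrict attention to monomials $w\in J^{(s)}$ with $x_j\nmid w$.

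The key combinatorial link between $I$ and $J$ is the following: for every $\mathfrak{q}\in\mathrm{Min}(I)$, the set of variables $\mathfrak{q}\setminus\{x_j\}$ is a vertex cover of the clutter $\mathcal{C}(J)$, because each edge of $\mathcal{C}(J)$ is an edge of $\mathcal{C}(I)$ that omits $x_j$ and hence is met by some variable of $\mathfrak{q}$ other than $x_j$. Consequently $\mathfrak{q}\setminus\{x_j\}$ contains some $\mathfrak{p}\in\mathrm{Min}(J)$, which gives $w\in J^{(s)}\subseteq \mathfrak{p}^s\subseteq \mathfrak{q}^s$ (using $\mathfrak{p}\subseteq \mathfrak{q}\setminus\{x_j\}\subseteq \mathfrak{q}$). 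As this holds for every $\mathfrak{q}\in\mathrm{Min}(I)$, I conclude $w\in I^{(s)}$, and since $I$ is assumed to be normally torsion-free, Theorem~\ref{Villarreal1} yields $w\in I^s$.

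Finally, from $w\in I^s$ with $x_j\nmid w$ I can recover $w\in J^s$: by definition $w$ is divisible by a product $u_{i_1}\cdots u_{i_s}$ of minimal generators of $I$, and if $x_j\nmid w$ then $x_j$ divides none of the $u_{i_\ell}$, so each $u_{i_\ell}\in\mathcal{G}(J)$ and therefore $u_{i_1}\cdots u_{i_s}\in J^s$. This establishes $J^{(s)}\subseteq J^s$, completing the proof via Theorem~\ref{Villarreal1}. I expect the main delicate point to be the passage from $\mathfrak{q}\in\mathrm{Min}(I)$ to some $\mathfrak{p}\in\mathrm{Min}(J)$ contained in $\mathfrak{q}$, which relies essentially on interpreting minimal primes as minimal vertex covers of the associated clutters; every other step is a direct translation between symbolic powers, minimal primes, and monomial divisibility.
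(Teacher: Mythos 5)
Your proposal is correct, and it is worth noting that the paper itself offers no proof to compare against: the statement is quoted in the preliminaries from \cite[Theorem 3.21]{SN}, so your argument has to stand on its own, and it does. The reduction through the equivalence (v)$\Leftrightarrow$(vi) of Theorem \ref{Villarreal1} is the natural one, and both pivotal steps check out. First, since $\mathcal{G}(I\setminus x_j)=\{u\in\mathcal{G}(I):x_j\nmid u\}$, every variable of a minimal prime of $J:=I\setminus x_j$ divides some minimal generator of $J$ (otherwise it could be removed from the corresponding vertex cover), so $x_j\notin\mathfrak{p}$ for all $\mathfrak{p}\in\mathrm{Min}(J)$ and the minimal generators of $J^{(s)}=\bigcap_{\mathfrak{p}\in\mathrm{Min}(J)}\mathfrak{p}^{s}$ avoid $x_j$; your phrase ``generated by a subset of the variables appearing in some $u\in\mathcal{G}(J)$'' should read ``generated by variables each of which appears in some $u\in\mathcal{G}(J)$,'' a harmless slip. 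Second, for $\mathfrak{q}\in\mathrm{Min}(I)$ the variables of $\mathfrak{q}$ other than $x_j$ do cover every edge of the clutter of $J$, because each such edge is an edge of the clutter of $I$ avoiding $x_j$; hence the prime they generate contains $J$ and therefore contains some $\mathfrak{p}\in\mathrm{Min}(J)$, giving $J^{(s)}\subseteq\bigcap_{\mathfrak{q}\in\mathrm{Min}(I)}\mathfrak{q}^{s}=I^{(s)}=I^{s}$ by normal torsion-freeness of $I$, and the final descent from $w\in I^{s}$ with $x_j\nmid w$ to $w\in J^{s}$ is immediate since no factor $u_{i_\ell}$ in a decomposition $u_{i_1}\cdots u_{i_s}\mid w$ can be divisible by $x_j$. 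The only point you should add explicitly is the degenerate case $J=0$, which occurs exactly when every generator of $I$ is divisible by $x_j$ (equivalently $(x_j)\in\mathrm{Min}(I)$, so that $\mathfrak{q}\setminus\{x_j\}$ could be empty); there the covering argument is vacuous but the conclusion is trivial, and when $J\neq 0$ no minimal prime of $I$ equals $(x_j)$, so every $\mathfrak{q}\setminus\{x_j\}$ is a genuine nonempty vertex cover. With that remark the proof is complete.
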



We close this section by   recalling the needed definitions  related to  hypergraph theory, which can be found in \cite{Berge, V1}. 

A finite {\it hypergraph} $\mathcal{H}$ on a vertex set $V({\mathcal{H}})=\{x_1,x_2,\ldots,x_n\}$ is a collection of  edges  $E({\mathcal{H}})=\{ E_1, \ldots, E_m\}$ with $E_i \subseteq V({\mathcal{H}})$
 for all $i=1, \ldots,m$.  
A hypergraph $\mathcal{H}$ is called {\it simple}  if $E_i \subseteq  E_j$ implies  $i = j$. Simple hypergraphs are also known as {\em clutters}. 
In addition,  if $|E_i|=d$  for all $i=1, \ldots, m$, then $\mathcal{H}$ is called a {\em $d$-uniform} hypergraph. A $2$-uniform hypergraph $\mathcal{H}$ is just a finite simple graph.

Let $V(\mathcal{H})=\{x_1,x_2,\ldots, x_n\}$ and $R=K[x_1, \ldots, x_n]$ be the polynomial ring in $n$ variables  over a field $K$. The {\it edge ideal} of $\mathcal{H}$ is given by
$$I(\mathcal{H}) = \left(\prod_{x_j\in E_i} x_j : E_i\in  E({\mathcal{H}})\right).$$


\section{Square-free monomial  ideals in  $K[x_1, x_2, x_3, x_4, x_5]$ and the  strong persistence property}

In this section we show that  any square-free monomial ideal in the polynomial ring $K[x_1, x_2, x_3, x_4, x_5]$ over a field $K$ has  the   strong persistence property.  To do this, we start  with the following definitions and notation.

\begin{definition}
\em{
     Given a clutter $\mathcal{C}$, we say that $\mathcal{C}$ is a \textit{strongly persistent} clutter if $I(\mathcal{C})$  has the strong persistence property. 
     }
 \end{definition}


 \begin{definition}   (\cite[Definition  9]{RT})  
 \em{
Given a clutter $\mathcal{C}=(V(\mathcal{C}), E(\mathcal{C}))$ and $x\notin V(\mathcal{C}),$ we define the \textit{cone} over $\mathcal{C}$,  
denoted by $\mathcal{C}x$, the clutter with  vertex set $V(\mathcal{C}x)=V(\mathcal{C})\cup \{x\}$ and edge set 
 $E(\mathcal{C}x)=\{f \cup \{x\} : f\in E(\mathcal{C})\}$. 
 }
 \end{definition}


\begin{notation}
Let  $M\in K[x_1, \ldots, x_n]$ be  a square-free monomial. Then for a monomial   $u=x_1^{\alpha _1}\dotsm x_n^{\alpha_n}\in K[x_1,\ldots, x_n]$, 
 we   set $\deg_M u:=\sum_{x_i \mid M}\alpha_i$.
\end{notation}


The next proposition will be used in the proof of Proposition \ref{strongly persistent.Pro.1}, and we state it here for ease of reference.
\begin{proposition} (\cite[Proposition 5]{RT})          \label{RT. Proposition 5} 
$\mathcal{C}$ has the strong persistence property if and only if $\mathcal{C}x$  has the strong persistence property.
\end{proposition}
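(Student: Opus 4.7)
The plan is to translate the combinatorial statement into a purely algebraic one and then carry out a short colon-ideal calculation. Since the cone $\mathcal{C}x$ is obtained from $\mathcal{C}$ by adjoining a new vertex $x\notin V(\mathcal{C})$ and attaching it to every edge, the edge ideal of the cone satisfies
\[
I(\mathcal{C}x) \;=\; x\,I(\mathcal{C}),
\]
working in the polynomial ring on $V(\mathcal{C})\cup\{x\}$. Setting $I=I(\mathcal{C})$, the claim reduces to the purely algebraic statement: $(I^{s+1}:I)=I^s$ for all $s\ge 1$ if and only if $((xI)^{s+1}:xI)=(xI)^s$ for all $s\ge 1$.

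The main computation is the identity
\[
\bigl((xI)^{s+1}:xI\bigr)\;=\;x^{s}\bigl(I^{s+1}:I\bigr).
\]
First I would write $(xI)^{s+1}=x^{s+1}I^{s+1}$ and expand the colon as an intersection over generators:
\[
\bigl(x^{s+1}I^{s+1}:xI\bigr)\;=\;\bigcap_{u\in\mathcal{G}(I)}\bigl(x^{s+1}I^{s+1}:xu\bigr).
\]
For each $u\in\mathcal{G}(I)$, since $x$ is a new variable with $\gcd(x,u)=1$, a monomial $f$ satisfies $fxu\in x^{s+1}I^{s+1}$ precisely when $x^{s}\mid f$ and $(f/x^{s})u\in I^{s+1}$; hence $(x^{s+1}I^{s+1}:xu)=x^{s}(I^{s+1}:u)$. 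Intersecting over $u$ and using that $x^s$ factors out of the intersection (because $x$ does not divide any minimal generator of $I^{s+1}:u$, or simply by distributivity of scalar multiplication over monomial intersections) gives the displayed identity.

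Once that identity is in hand, the equivalence is immediate: $((xI)^{s+1}:xI)=(xI)^{s}$ is exactly $x^{s}(I^{s+1}:I)=x^{s}I^{s}$, and cancelling the common factor $x^{s}$ (the ring is a domain, and we are dealing with monomial ideals) yields $(I^{s+1}:I)=I^{s}$. Since $s\ge 1$ is arbitrary, the strong persistence property for $I(\mathcal{C})$ is equivalent to that for $I(\mathcal{C}x)=xI(\mathcal{C})$.

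The only point that needs a little care is the generator-wise colon computation $(x^{s+1}I^{s+1}:xu)=x^{s}(I^{s+1}:u)$; this is a routine monomial bookkeeping once one uses $\gcd(x,u)=1$, and is the lone technical step of the argument.
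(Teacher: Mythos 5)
The paper does not prove this statement at all: it is quoted verbatim from \cite[Proposition 5]{RT} and used as a black box, so there is no internal proof to compare against. Your argument is correct and complete as a self-contained proof. The identification $I(\mathcal{C}x)=x\,I(\mathcal{C})$ is exactly right, the generator-wise computation $\bigl(x^{s+1}I^{s+1}:xu\bigr)=x^{s}\bigl(I^{s+1}:u\bigr)$ is valid because $x$ divides no minimal generator of $I^{s+1}$, pulling $x^{s}$ out of the finite intersection is legitimate since multiplication by a nonzerodivisor commutes with intersections in a domain, and the final cancellation of $x^{s}$ is likewise harmless; the only bookkeeping point worth making explicit is that the colon $(I^{s+1}:I)$ is unaffected by passing from $K[V(\mathcal{C})]$ to the larger ring $K[V(\mathcal{C})\cup\{x\}]$, which holds by flatness of the polynomial extension. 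In spirit your proof is the natural one (and presumably close to the original in \cite{RT}); it is also pleasant in that it exhibits the proposition as a special case of the more general principle that the strong persistence property is preserved under multiplying a monomial ideal by a variable not in its support, in analogy with Lemma \ref{NTF.Lem.3.12} for normally torsion-freeness.
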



To formulate Theorem \ref{main.Section3}, as the main result of this section, we will require Propositions \ref{strongly persistent.Pro.1} and 
\ref{strongly persistent.Pro.2}. 

\begin{proposition}\label{strongly persistent.Pro.1} 
Suppose that  $\mathcal{C}$  is  a strongly persistent clutter, $x\notin V(\mathcal{C})$, and $\mathcal{C}x$ is the cone over $\mathcal{C}$.
 Then $\mathcal{C}'$ is  a  strongly persistent clutter with  $V(\mathcal{C}')=V(\mathcal{C}) \cup \{x\}$ and 
  $E(\mathcal{C}')= \{V(\mathcal{C})\} \cup E(\mathcal{C}x)$.
\end{proposition}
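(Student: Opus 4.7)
The plan is to set $V := V(\mathcal{C}) = \{x_1, \ldots, x_n\}$, $I := I(\mathcal{C})$, $M := x_1 x_2 \cdots x_n$, and work in the ambient polynomial ring $R := K[x_1, \ldots, x_n, x]$. From the description of $E(\mathcal{C}')$, the edge ideal is $J := I(\mathcal{C}') = xI + (M) \subset R$. I would prove the strong persistence property of $J$ by establishing the nontrivial containment $(J^{s+1} :_R J) \subseteq J^s$ for every $s \geq 1$. Two structural facts drive the argument. First, every edge monomial of $\mathcal{C}$ divides $M$, so $M \in I$ and hence $(M) + I = I$; this yields the collapse $\sum_{k=0}^{t} M^{k} I^{t-k} = I^t$ for every $t \geq 0$. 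Second, by the binomial expansion $J^{t} = \sum_{k=0}^{t} x^{t-k} M^{k} I^{t-k}$, a monomial lies in $J^t$ iff it is divisible by some generator of the form $x^{t-k} M^{k} u_1 \cdots u_{t-k}$ with $u_i \in \mathcal{G}(I)$.

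Pick a monomial $w = x^a v \in (J^{s+1} :_R J)$ with $x \nmid v$; then both $wM \in J^{s+1}$ and $wxu \in J^{s+1}$ for every $u \in \mathcal{G}(I)$. My plan is to split on the size of $a$. If $a \geq s+1$, I would use $wxu \in J^{s+1}$: the $x$-degree condition becomes automatic, and the non-$x$ part of each dividing generator forces $vu \in \sum_{k=0}^{s+1} M^k I^{s+1-k} = I^{s+1}$. Since this holds for every $u \in \mathcal{G}(I)$, one obtains $vI \subseteq I^{s+1}$, and the strong persistence of $I$ then yields $v \in I^s$; consequently $w = x^a v \in x^a I^s \subseteq J^s$ via the $k=0$ summand $x^s I^s$ of $J^s$. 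If instead $a \leq s$, I would use $wM \in J^{s+1}$: matching $x$-degrees now forces $k \geq s+1-a \geq 1$ in any dividing generator, so after cancelling the extra $M$ the factor $v$ must be divisible by $M^{k-1} u_1 \cdots u_{s+1-k}$ for some such $k$. Hence $v \in \sum_{k=s+1-a}^{s+1} M^{k-1} I^{s+1-k}$, which after the reindexing $j = k-1$ and factoring out $M^{s-a}$ simplifies to $M^{s-a} \sum_{j=0}^{a} M^{j} I^{a-j} = M^{s-a} I^a$; therefore $w = x^a v \in x^a M^{s-a} I^a$, and this is exactly the $k = s-a$ summand of $J^s$.

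I expect the main obstacle to be the careful bookkeeping in the low-$a$ case: tracking the $x$-degree match that forces $k \geq 1$, cancelling the extra $M$ from the divisibility relation on $vM$, and executing the reindexing that collapses the sum to $M^{s-a} I^a$ via $(M)+I=I$. It is worth highlighting that the strong persistence of $I$ is invoked only once, in the high-$a$ case; the low-$a$ case succeeds purely from $M \in I$, with no appeal to the hypothesis on $\mathcal{C}$. Since the reverse containment $J^s \subseteq (J^{s+1} :_R J)$ is immediate, this dichotomy completes the proof.
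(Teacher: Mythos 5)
Your proof is correct, and it takes a noticeably different route from the paper's. The paper works with $\mathcal{G}(I(\mathcal{C}'))=\{f_0,f_1,\ldots,f_r\}$ (where $f_0=\prod_{z\in V(\mathcal{C})}z$ and the $f_i$ are the cone edges), fixes $u\in(I(\mathcal{C}')^{k+1}:I(\mathcal{C}'))$, and splits according to whether the chosen factorizations of the products $uf_i$ ever use $f_0$: if never, it reduces to the cone ideal $I(\mathcal{C}x)$ and invokes the cited result that strong persistence passes to cones (\cite[Proposition 5]{RT}); if some $\alpha_{i,0}>0$, it argues via divisibility by $f_0$ or an $x$-degree count to force an $f_0$-factor in the expression for $uf_0$. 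You instead write $J=I(\mathcal{C}')=xI+(M)$ with $I=I(\mathcal{C})$ and $M=f_0$, use the binomial expansion $J^t=\sum_k x^{t-k}M^kI^{t-k}$ together with the collapse $\sum_k M^kI^{t-k}=I^t$ (valid since $M\in I$), and split on the $x$-degree $a$ of the monomial $w=x^av$: for $a\geq s+1$ you strip $x$ from the relations $wxu\in J^{s+1}$ to get $v\in(I^{s+1}:I)=I^s$, which is the only place the hypothesis on $\mathcal{C}$ enters, and for $a\leq s$ the relation $wM\in J^{s+1}$ plus cancellation of one copy of $M$ (legitimate because $M$ is squarefree in all the base variables) lands $v$ in $M^{s-a}I^a$, hence $w$ in the summand $x^aM^{s-a}I^a$ of $J^s$. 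The underlying structural facts are the same in both arguments ($x$ divides every cone edge; every edge divides $f_0$), but your version is self-contained — it bypasses \cite[Proposition 5]{RT} entirely and uses strong persistence of $I(\mathcal{C})$ directly rather than of the cone ideal — at the cost of slightly more bookkeeping with the binomial expansion, whereas the paper's argument is shorter once the cone result is granted.
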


\begin{proof}
Let  $I:=I(\mathcal{C}')$, $J:=I(\mathcal{C}x)$, $\mathcal{G}(J)=\{f_1,\ldots,f_r\}$, and  $f_0=\prod_{z\in V(\mathcal{C})}z$. 
It follows from $E(\mathcal{C}')= \{V(\mathcal{C})\} \cup E(\mathcal{C}x)$ that  $\mathcal{G}(I)=\{f_0, f_1,\ldots,f_r\}$.  
Fix $k\geq 1$  and take an arbitrary monomial   $u\in (I^{k+1}:I)$. Then, for each   $0\leq i \leq r$,   we can write 
 $uf_i=f_0^{\alpha_{i,0}}f_1^{\alpha_{i,1}}\cdots f_r^{\alpha_{i,r}}M_i$  with $\sum_{j=0}^r\alpha_{i,j}=k+1$ and for some monomial $M_i$.  Now, one may consider the  following cases:

\bigskip
\textbf{Case 1.}   $\alpha_{i,0}=0$  for all  $1 \leq i \leq r$. Then $u\in (J^{k+1}:J)$. Since $\mathcal{C}$  is  a strongly persistent clutter, this implies that 
$I(\mathcal{C})$ has the strong persistence property, and  by virtue of  Proposition \ref{RT. Proposition 5}, we can deduce that $J$ has the strong persistence property.  Thus, $u\in J^k$. Due to  $ J^k\subseteq I^k$, we get  $u\in I^k$. 

\bigskip
\textbf{Case 2.} $\alpha_{i,0}>0$ for some $1 \leq i \leq r$. If $x \mid M_i$, then   $f_i\mid M_if_0$ due to $f_0=\prod_{z\in V(\mathcal{C})}z$. We therefore get  
  $u \in I^k$. If $ x\nmid M_i$, then $\deg_xu \leq k-1$, and so $\deg_x{(uf_0)} \leq  k-1$ since  $x\nmid f_0$. 
 Because $uf_0=f_0^{\alpha_{0,0}}f_1^{\alpha_{0,1}}\cdots f_r^{\alpha_{0,r}}M_0$ and $x\mid f_j$ for all $1\leq j \leq r$, we must have $ \alpha_{0,0}\geq 1.$ This   gives rise to  $u\in I^k$.
 
 Consequently, we obtain   $(I^{k+1}:I)=I^k$, and so  $\mathcal{C}'$ is  a  strongly persistent clutter, as claimed.   
\end{proof}


\begin{proposition}  \label{strongly persistent.Pro.2} 
Let $X$ be a finite set and $x,y \notin X$ with  $x\neq y$. Let  $\mathcal{C}$ be a clutter on $X\cup \{x,y\}$ such that $x\in e$ or $y\in e$,
 but $\{x,y\}\nsubseteq e$ for any  $e\in E(\mathcal{C})$.  Then the following statements hold:

\begin{itemize}
\item[(i)]  $\mathcal{C}'$ is strongly persistent, where   $E(\mathcal{C}')=E(\mathcal{C})\cup \{\{x,y\}\}$.
\item[(ii)] $\mathcal{C}'$ is strongly persistent, where   $E(\mathcal{C}')=E(\mathcal{C})\cup \{X,\{x,y\}\}$.
\end{itemize}
\end{proposition}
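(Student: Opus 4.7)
The plan is to prove both parts by a unified combinatorial analysis of monomials in $(I^{k+1} : I)$, where $I := I(\mathcal{C}')$. Since every edge of $\mathcal{C}$ contains exactly one of $x$, $y$, we can write $I(\mathcal{C}) = x J_x + y J_y$ for monomial ideals $J_x, J_y \subseteq K[X]$ encoding the links of $x$ and $y$ in $\mathcal{C}$. Hence $I = (xy) + x J_x + y J_y$ in part (i), and $I = (xy) + x J_x + y J_y + (u_X)$ with $u_X := \prod_{z \in X} z$ in part (ii). Expanding a generic product of $k$ generators yields the following characterization: a monomial $u = x^a y^b w$ with $w \in K[X]$ lies in $I^k$ if and only if there exist non-negative integers $r, s, t$ (and, for (ii), also $q$) with $r + s + t = k$ (resp.\ $q + r + s + t = k$), $r + s \leq a$, $r + t \leq b$, and $w \in J_x^s J_y^t$ (resp.\ $w \in u_X^q J_x^s J_y^t$); here $r, s, t, q$ count the generators of types $xy$, $xf$, $yg$, $u_X$ used in the factorization.

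Given $u = x^a y^b w \in (I^{k+1} : I)$, we first apply the characterization to $u \cdot xy \in I^{k+1}$ to obtain a tuple $(r_0, s_0, t_0)$ (plus $q_0$ in (ii)) summing to $k+1$ and satisfying $r_0 + s_0 \leq a + 1$, $r_0 + t_0 \leq b + 1$. In the generic situation, at least one of the following reductions produces a valid $k$-tuple that certifies $u \in I^k$: decrement $r_0$ (when $r_0 \geq 1$); decrement $t_0$ (when $r_0 + s_0 \leq a$ and $t_0 \geq 1$); decrement $s_0$ (when $r_0 + t_0 \leq b$ and $s_0 \geq 1$); and, in (ii), decrement $q_0$ (when $r_0 + s_0 \leq a$, $r_0 + t_0 \leq b$, and $q_0 \geq 1$). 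The containments $J_x^s J_y^{t+1} \subseteq J_x^s J_y^t$ and $u_X^{q+1} J_x^s J_y^t \subseteq u_X^q J_x^s J_y^t$ ensure that $w$ still lies in the required ideal after each such reduction.

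All reductions fail precisely in the \emph{critical case} $r_0 = 0$, $s_0 = a + 1$, $t_0 = b + 1$; this is where we expect the main obstacle. Note that $s_0, t_0 \geq 1$ in this case, so necessarily $J_x, J_y \neq 0$ (otherwise $w \in J_x^{s_0} J_y^{t_0} = 0$, contradicting $w \neq 0$). For part (i) the critical case forces $a + b = k - 1$; applying the characterization to $u \cdot x f \in I^{k+1}$ for any $f \in \mathcal{G}(J_x)$ produces a tuple $(r', s', t')$ of sum $k+1$ with $r' + s' \leq a + 1$ and $r' + t' \leq b$, and summing these bounds against $r' + s' + t' = k + 1$ yields the contradiction $r' \leq -1$, so this case is vacuous. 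For part (ii) the critical case only forces $a + b \leq k - 1$; here the stronger bounds $r_4 + s_4 \leq a$ and $r_4 + t_4 \leq b$ from the tuple $(q_4, r_4, s_4, t_4)$ of $u \cdot u_X \in I^{k+1}$, together with $q_4 + r_4 + s_4 + t_4 = k + 1$, force $q_4 \geq 2$; then $w u_X \in u_X^{q_4} J_x^{s_4} J_y^{t_4}$ divides by $u_X$ to give $w \in u_X^{q_4 - 1} J_x^{s_4} J_y^{t_4}$, producing the $k$-tuple $(q_4 - 1, r_4, s_4, t_4)$ that establishes $u \in I^k$.
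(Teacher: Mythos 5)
Your proof is correct, and it takes essentially the same route as the paper's: multiply $u$ by $xy$, dispose of the cases where the factorization can be rearranged to exhibit $u\in I^k$, and in the remaining (critical) case use the bound $\deg_{xy}u\le k-1$ against a second test element — an edge through $x$ in (i), and $u_X=f_0$ in (ii) — which is exactly the paper's contradiction via $uf_2$ and its forcing of $\alpha_{0,0}\ge 1$ via $uf_0$. Your decomposition $I=(xy)+xJ_x+yJ_y\,(+\,(u_X))$ and the explicit $(q,r,s,t)$-criterion for membership in $I^k$ amount to a more systematic bookkeeping of the paper's factorizations $uf_i=f_0^{\alpha_{i,0}}f_1^{\alpha_{i,1}}\cdots f_r^{\alpha_{i,r}}M_i$ and its $\deg_{xy}$ counts, not a different argument.
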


\begin{proof}
 Suppose that $f_0=\prod_{z\in X}z$, $f_1=xy$, $\mathcal{G}(I(\mathcal{C}))=\{f_2,\ldots,f_r\}$, and  $I:=I(\mathcal{C}')$. Fix $k\geq 1$ and 
 choose an arbitrary monomial $u\in (I^{k+1}:I)$. 

(i)  Since  $E(\mathcal{C}')=E(\mathcal{C})\cup \{\{x,y\}\}$, we have  $\mathcal{G}(I)=\{f_1, f_2,\ldots,f_r\}$. We thus can write  $uf_1=f_1^{\alpha_{1,1}}\cdots f_r^{\alpha_{1,r}}M_1$ with $\sum_{j=1}^r\alpha_{1,j}=k+1$ and some monomial $M_1$. If $x \mid M_1$ or $y \mid M_1$ or $\alpha_{1,1}>0$, then we obtain   $u\in I^k$. Therefore, we assume that  $x\nmid M_1$ , $y \nmid M_1$, and  $\alpha_{1,1}=0$.
  This implies that  $\deg_{xy} u\leq k-1$, and so   $\deg_{xy} (uf_2)\leq k$. 
  Hence, $uf_2\notin I^{k+1}$ since $x \mid f$ or $ y\mid f$ for any $f\in \mathcal{G}(I)$, which  contradicts our  assumption 
   $u\in(I^{k+1}:I)$.

(ii)  Due to  $E(\mathcal{C}')=E(\mathcal{C})\cup \{X,\{x,y\}\}$, we get $\mathcal{G}(I)=\{f_0, f_1, f_2,\ldots,f_r\}$.
  Then, for each   $0\leq i \leq r$,   we can write  $uf_i=f_0^{\alpha_{i,0}}f_1^{\alpha_{i,1}}\cdots f_r^{\alpha_{i,r}}M_i$
    with $\sum_{j=0}^r\alpha_{i,j}=k+1$ and for some monomial $M_i$.  
 If $x \mid M_1$ or $y \mid M_1$ or $\alpha_{1,1}>0$, then we get  $u\in I^k$. Accordingly, we assume that  $x\nmid M_1$, $y \nmid M_1$, and  $\alpha_{1,1}=0$. 
 This yields that  $\deg_{xy} u\leq k-1$, and so  $\deg_{xy} (uf_0)\leq k-1$.  
 Thanks to $uf_0=f_0^{\alpha_{0,0}}f_1^{\alpha_{0,1}}\cdots f_r^{\alpha_{0,r}}M_0$, we must have  $\alpha_{0,0}\geq 1$ due to  $x \mid f_i$ or $ y\mid f_i$ for each $1\leq i \leq r$.  This leads to  $u\in I^k$. 
\end{proof}

The following proposition is essential for us to show Lemma  \ref{strongly persistent.Lem.2}. 

\begin{proposition}\label{strongly persistent.Pro.3} 
Let  $\mathcal{C}$ be  a strongly persistent clutter on a finite set $X$ and $x,y \notin X$ with $x\neq y$. Then $\mathcal{C}'$ is strongly persistent, where 
 $V(\mathcal{C}') =X\cup\{x,y\} $ and $E(\mathcal{C}') =\{e\cup\{x\},e\cup\{y\} : e\in E(\mathcal{C})\}.$
\end{proposition}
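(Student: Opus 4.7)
The plan is to exploit the structural decomposition $I := I(\mathcal{C}') = xJ + yJ$, where $J := I(\mathcal{C})$ with $\mathcal{G}(J) = \{f_1, \ldots, f_r\} \subset K[X]$, and to reduce the strong persistence of $I$ to that of $J$ via a degree accounting in the auxiliary variables $x$ and $y$. Note that since each $f_i$ is supported on $X$, the minimal generating set of $I$ is exactly $\mathcal{G}(I) = \{xf_i, yf_i : 1 \leq i \leq r\}$.

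The first step is to establish the following monomial-membership criterion for powers of $I$: any monomial $u$ in $K[X \cup \{x,y\}]$ admits a unique factorization $u = x^{\alpha} y^{\beta} u'$ with $u' \in K[X]$ coprime to $xy$, and I claim
\[
u \in I^k \iff \alpha + \beta \geq k \text{ and } u' \in J^k.
\]
This follows because every generator of $I^k$ has the form $x^{a} y^{b} f_{i_1}\cdots f_{i_k}$ with $a+b=k$ and the $f_{i_j}$'s coprime to $xy$; such a generator divides $u$ iff $a \leq \alpha$, $b \leq \beta$, and $f_{i_1}\cdots f_{i_k} \mid u'$. The existence of nonnegative integers $a,b$ with $a+b=k$, $a \leq \alpha$, $b \leq \beta$ is equivalent to $\alpha + \beta \geq k$, and the divisibility condition holds for some choice of $f_{i_j}$'s iff $u' \in J^k$.

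With this criterion in hand, the remainder is a short verification. Take a monomial $u = x^\alpha y^\beta u' \in (I^{k+1}:I)$. For each $i$, both $uxf_i = x^{\alpha+1}y^\beta u' f_i$ and $uyf_i = x^\alpha y^{\beta+1} u' f_i$ lie in $I^{k+1}$; applying the criterion to either product gives $\alpha + \beta \geq k$ together with $u' f_i \in J^{k+1}$. Since this holds for every $i$, I conclude $u' \in (J^{k+1} : J) = J^k$, where the last equality is the strong persistence of $\mathcal{C}$. Combining $\alpha + \beta \geq k$ with $u' \in J^k$, the criterion yields $u \in I^k$, so $(I^{k+1}:I) \subseteq I^k$; the reverse inclusion is automatic.

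The main obstacle is really only pinning down the monomial-membership criterion for $I^k$ in a usable form; once the $(\alpha, \beta, u')$ decomposition is in place, the degree bookkeeping is automatic, because each minimal generator of $I$ contributes exactly one extra copy of either $x$ or $y$ beyond a factor supported on $X$. No further input beyond the strong persistence hypothesis on $\mathcal{C}$ is required.
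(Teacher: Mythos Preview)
Your proof is correct and follows essentially the same approach as the paper: both decompose a monomial $u\in (I(\mathcal{C}')^{k+1}:I(\mathcal{C}'))$ as $u = x^{\alpha}y^{\beta}u'$ with $u'$ supported on $X$, obtain $\alpha+\beta\ge k$ from the $xy$-degree count, deduce $u'\in (I(\mathcal{C})^{k+1}:I(\mathcal{C}))=I(\mathcal{C})^k$ from strong persistence of $\mathcal{C}$, and then reassemble to conclude $u\in I(\mathcal{C}')^k$. Your packaging of the membership criterion as an explicit if-and-only-if statement is a bit cleaner than the paper's inline computation, but the underlying argument is the same.
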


\begin{proof}
  Set  $I:=I(\mathcal{C})$ and  $J:=I(\mathcal{C}')$. Let  $\mathcal{G}(I)=\{e_1,\ldots,e_r\}$. Define  $f_i:=e_ix$, and $g_i:=e_iy$ for all $i=1, \ldots, r$. 
This implies that    $\mathcal{G}(J)=\{f_1, \ldots, f_r, g_1, \ldots, g_r\}$. 
 Fix $k\geq 1$ and  take an arbitrary monomial  $u\in (J^{k+1}:J)$. Then we can write $u=u'x^{\alpha} y^{\beta}$ such that 
  $\mathrm{gcd}(u',x^\alpha y^\beta)=1.$ 
 Let  $e_i\in \mathcal{G}(I)$, where $1\leq i \leq r$. Then $ue_ix\in J^{k+1}$, and so we have  
 \begin{equation}
    ue_ix= f_1^{\alpha_1}\cdots f_r^{\alpha_r}g_1^{\beta_1}\cdots g_r^{\beta_r} M=u'x^{\alpha} y^{\beta}e_ix,  \label{eq.4.2}
     \end{equation}
     where $\sum_{i=1}^r \alpha_i+ \sum_{i=1}^r \beta_i=k+1$ and $M$ is a monomial. Also, one can easily
      check that $\deg_{xy}(ue_ix)=\alpha +\beta+1=\deg_{xy} M+k+1\geq k+1$.  This yields that $\alpha + \beta \geq k$. 
           Dividing (\ref{eq.4.2}) by $x^\alpha y^\beta x$, we get    $u'e_i= e_1^{\alpha_1+\beta_1}\cdots e_r^{\alpha_r+\beta_r} M'.$ 
      Because $\sum_{i=1}^r \alpha_i+ \sum_{i=1}^r \beta_i=k+1$ and $e_i$ is arbitrary, this gives that  $u'\in ( I^{k+1}:I)$.
      Since $I$ has the strong persistence property, we obtain   $u'\in I^k$. Thus, we can write  $u'=L e_1^{\gamma_1}\cdots e_r^{\gamma_r}$  
      with $ \gamma_1+\cdots +\gamma_r=k$ and $L$ is a monomial. Due to  $\alpha+\beta \geq k$,  we get the following equalities
 $$u=u'x^\alpha y^\beta =Lx^\alpha y^\beta e_1^{\gamma_1}\cdots e_r^{\gamma_r}=Lx^{\alpha-k_1}y^{\beta -k_2} h_1 \cdots h_k\in J^k,$$ where $k_1+k_2=k$ and, for each $1\leq i\leq k$, we have $h_i= e_jx$ or $h_i=e_jy$ for some $1\leq j\leq r$. This means that $\mathcal{C}'$ is strongly persistent, as required. 
\end{proof}


To demonstrate Proposition \ref{strongly persistent.Pro.4}, we will utilize Lemmas \ref{strongly persistent.Lem.2},  \ref{strongly persistent.Lem.3}, 
and \ref{Lem.Unique}.

\begin{lemma}\label{strongly persistent.Lem.2}
 Let $ X=\{x,y,x_1,x_2,x_3\} $ be a finite set of cardinality $5$, and  $\mathcal{C}$ be a clutter on $X$ such that if $e\in  E(\mathcal{C})$, then 
  $|e|=3$, $\{x,y\}\nsubseteq e$, and $\{x_1,x_2,x_3\}\notin E(\mathcal{C})$.  Then $\mathcal{C}$ is strongly persistent.
\end{lemma}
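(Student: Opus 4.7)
The plan is to partition the possible clutters $\mathcal{C}$ by the distribution of their edges between $x$ and $y$. Write $P_1 = \{x_1, x_2\}$, $P_2 = \{x_1, x_3\}$, $P_3 = \{x_2, x_3\}$. The hypotheses force every edge of $\mathcal{C}$ to have the form $\{x\} \cup P_i$ or $\{y\} \cup P_i$, so the clutter is completely determined by $S_x = \{i : \{x\} \cup P_i \in E(\mathcal{C})\}$ and $S_y = \{i : \{y\} \cup P_i \in E(\mathcal{C})\}$, and $|E(\mathcal{C})| = |S_x| + |S_y| \le 6$. By the $x \leftrightarrow y$ symmetry of the hypothesis, I may assume $|S_x| \ge |S_y|$.

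First I would dispatch the easy reductions. If $|S_x| + |S_y| \le 3$, strong persistence follows from Proposition~\ref{RT-Pro-8}. If $|S_y| = 0$, then $\mathcal{C}$ is the cone (with apex $x$) over the simple graph on $\{x_1, x_2, x_3\}$ with edge set $\{P_i : i \in S_x\}$; that graph is strongly persistent by \cite[Theorem 2.15]{MMV}, so Proposition~\ref{RT. Proposition 5} lifts the property to $\mathcal{C}$. When $S_x = S_y$ (and nonempty), the clutter $\mathcal{C}$ coincides with the one produced by Proposition~\ref{strongly persistent.Pro.3} applied to the graph on $\{x_1, x_2, x_3\}$ with edge set $\{P_i : i \in S_x\}$ and the new vertices $x, y$; since that base graph is strongly persistent, so is $\mathcal{C}$.

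What remains are the asymmetric configurations with $|S_x| + |S_y| \ge 4$ and $|S_y| \ge 1$: namely $(|S_x|, |S_y|) \in \{(3,1), (3,2)\}$ and $(|S_x|, |S_y|) = (2,2)$ with $|S_x \cap S_y| = 1$. For each of these I would argue directly. Fix $k \ge 1$ and a monomial $u = x^a y^b x_1^{c_1} x_2^{c_2} x_3^{c_3}$ in $(I^{k+1} : I)$, where $I = I(\mathcal{C})$. Pick a distinguished generator $f$ (the unique $y$-edge in case $(3,1)$, or an asymmetric edge in the other two cases) and expand $u \cdot f = \prod_g g^{a_g} \cdot N$ with $\sum_g a_g = k+1$. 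If the multiplicity of $f$ in this expansion is at least~$1$, dividing by $f$ immediately exhibits $u \in I^k$. Otherwise the $y$-degree that $f$ contributes to the left-hand side must be absorbed by $N$, and the induced constraints on $N$, combined with the remaining colon conditions $u \cdot g \in I^{k+1}$, should yield nonnegative integers $\beta_g$ with $\sum_g \beta_g = k$ for which $\prod_g g^{\beta_g}$ divides $u$.

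The main obstacle is the case $(|S_x|, |S_y|) = (3,1)$, where $\mathcal{C}$ is the cone over $K_3$ with apex $x$ together with a single $y$-edge. One checks that neither Proposition~\ref{strongly persistent.Pro.3} nor Lemma~\ref{RT-Lem-2} applies here (no edge produces a chain of intersections). When the distinguished $y$-generator has multiplicity $0$ in the expansion of $uf$, expressing $u$ as a product of $k$ generators reduces to a triangle-feasibility system of the shape $\beta_i + \beta_j \le c_\ell$ on the $(x_1, x_2, x_3)$-degrees. Feasibility will follow once I extract from $u \in (I^{k+1}:I)$ the strengthened inequalities $c_1 + c_2 + c_3 \ge 2k$ and $a + c_i \ge k$ for each $i \in \{1,2,3\}$ (by summing over the appropriate constraints coming from each $u \cdot g \in I^{k+1}$), and then construct an explicit integer solution; the cases $(3,2)$ and $(2,2)$ with $|S_x \cap S_y| = 1$ are handled by similar, slightly shorter bookkeeping.
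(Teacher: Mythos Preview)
Your case decomposition via $S_x, S_y$ is exactly the paper's (it calls the corresponding graphs $\mathcal{H}$ and $\mathcal{H}'$), and your reductions for $|E(\mathcal{C})|\le 3$ and for $S_x=S_y$ via Proposition~\ref{strongly persistent.Pro.3} match as well. The divergence is in how the three residual cases $(3,1)$, $(3,2)$, $(2,2)$ with $|S_x\cap S_y|=1$ are handled, and here your plan has a real gap.

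Your scheme is to extract the inequalities $c_1+c_2+c_3\ge 2k$ and $a+c_i\ge k$ for all $i$ from $u\in(I^{k+1}:I)$, then solve a ``triangle feasibility'' system. Neither half works as stated. First, $a+c_i\ge k$ is \emph{not} a consequence of the colon condition alone: in case $(3,1)$ with $f=yx_1x_2$, the monomial $u=(yx_1x_2)^k$ lies in $(I^{k+1}:I)$ but has $a=c_3=0$. The inequality does hold in the multiplicity-$0$ subcase, but only because something much stronger is true there: since every generator appearing in the expansion of $uf$ is an $x$-edge, one gets $a\ge k+1$ outright. You never isolate this fact, and it is the linchpin. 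Second, even granting your two inequalities, the feasibility system can fail: with $a=k$, $b=0$, $c_1=2k$, $c_2=c_3=0$ all your constraints hold, yet no $(\beta_g)$ exists. So further constraints coming from the \emph{other} colon conditions $u\cdot g\in I^{k+1}$ are essential, and your sketch does not explain how they close the gap.

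The paper's route avoids this entirely. In the multiplicity-$0$ subcase of $(3,1)$ it observes $a\ge k+1$, divides $u$ by $x^a y^b$ to get a monomial $m'$ in $K[x_1,x_2,x_3]$, checks that the remaining colon conditions force $m'\in(J^{k+1}:J)$ for the triangle ideal $J=(x_1x_2,x_1x_3,x_2x_3)$, and then invokes the known strong persistence of $J$ to get $m'\in J^k$; multiplying back by $x^k$ gives $u\in I^k$. Case $(3,2)$ uses the same reduction when it is available, together with one explicit divisibility construction in the subcase where the reduction is blocked; Case $(2,2)$ is then declared analogous. The moral: rather than trying to solve the feasibility system by hand from two summary inequalities, factor out $x^a y^b$ and let the strong persistence of the underlying graph do the work.
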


\begin{proof} 
We put   $I:=I(\mathcal{C})$. It is not hard to check that $E(\mathcal{C})$ is a subset of 
$$\{\{x, x_1 , x_2\}, \{x, x_1 , x_3\}, \{x, x_2 , x_3\}, \{y, x_1 , x_2\}, \{y, x_1 , x_3\}, \{y, x_2 , x_3\}\}.$$
This implies that $x\in e$ or $y\in e$ (not both of them) for each $e\in E(\mathcal{C})$. 
 If $1\leq |E(\mathcal{C})| \leq 2$, then in view of the proof of \cite[Theorem 5.10]{RNA}, we deduce that $I$ has the strong persistence property. 
If  $|E(\mathcal{C})|= 3$, then we can conclude from  Proposition \ref{RT-Pro-8}  that $I$ has the strong persistence property. 
We thus  assume $4\leq |E(\mathcal{C})| \leq 6$. We consider $\mathcal{H}$ and 
 $\mathcal{H}'$ the simple graphs on $X_1\subseteq \{x_1, x_2,x_3\}$ whose edge sets are $E(\mathcal{H})=\{e\cap X_1 \mid e\in E(\mathcal{C}),x\in e\}$ and $E(\mathcal{H}')=\{e\cap X_1 \mid e\in E(\mathcal{C}), y \in e\}$. If  $|E(\mathcal{C})|= 6$, then  $E(\mathcal{H})=E(\mathcal{H}')$, and so 
  one can derive from Proposition \ref{strongly persistent.Pro.3}   that $I$ has the strong persistence property. Hence, the remaining cases are as follows: 

\bigskip 
{\bf Case (I):} $|\mathcal{G}(I(\mathcal{H}))|=3$ and  $|\mathcal{G}(I(\mathcal{H}'))|=1$. Without loss of generality, we can assume 
 $\mathcal{G}(I(\mathcal{H}))=\{f_1=x_1x_2, f_2=x_2x_3, f_3=x_1x_3\}$ and $\mathcal{G}(I(\mathcal{H}'))=\{f_1=x_1x_2\}$. 
 Set  $J:=(f_1, f_2, f_3)$ and pick  an arbitrary   monomial    $m\in(I^{k+1}:I)$, where $k\geq 1$. We can write  $m=m'x^{\alpha} y^{\beta}$ with 
  $\mathrm{gcd}(m',x^\alpha y^\beta)=1.$ Then we have the following  
 \begin{equation}\label{13}
     mf_1y=\ell_1 (f_1x)^{\beta_{11}}(f_2x)^{\beta_{12}}(f_3x)^{\beta_{13}}(f_1y)^{\beta_{14}},
 \end{equation}
 and 
\begin{equation}\label{14}
  mf_ix=\ell_{i2} (f_1x)^{\alpha_{i1}}(f_2x)^{\alpha_{i2}}(f_3x)^{\alpha_{i3}}(f_1y)^{\alpha_{i4} } \ \text{for} \ i =1,2,3,
\end{equation} 
where $\alpha_{i1}+\cdots+\alpha_{i4}=k+1$ for $i=1,2,3,$  $\beta_{11}+\cdots+\beta_{14}=k+1$, and $\ell_1$ and $\ell_{i2}$ (for $i=1,2,3$) are monomials. 
 If $\beta_{14}>0$, then we have $m=\ell_1 (f_1x)^{\beta_{11}}(f_2x)^{\beta_{12}}(f_3x)^{\beta_{13}}(f_1y)^{\beta_{14} -1}\in I^k.$ 
 Thus,  let  $\beta_{14}=0$.  From (\ref{13}), we obtain   $\alpha\geq \beta_{11}+\beta_{12}+\beta_{13}=k+1$. Dividing (\ref{14}) by $x^{\alpha+1} y^\beta$ yields that 
   $$m'f_i=\ell_i' f_1^{\alpha_{i1}+\alpha_{i4}}f_2^{\alpha_{i2}}f_3^{\alpha_{i3}} \   \text{for}   \  i=1,2,3.$$
This gives that  $ m'\in (J^{k+1}:J)$. Since $J$ has  the strong persistence property, we can deduce that  $m'\in J^k$. Thus,
$m'=\ell'' f_1^{\gamma_1}f_2^{\gamma_2} f_3^{\gamma_3}$  with $ \gamma_1+\gamma_2 +\gamma_3=k$ and $\ell''$ is a monomial. 
Accordingly, we get  $$ m=m'x^\alpha y^\beta =\ell''x^{\alpha-k} y^\beta (xf_1)^{\gamma_1}(xf_2)^{\gamma_2} (xf_3)^{\gamma_3} \in I^k.$$ 

\bigskip
{\bf Case (II):}    $|\mathcal{G}(I(\mathcal{H}))|=3$ and  $|\mathcal{G}(I(\mathcal{H}'))|=2$. Without loss of generality, we can assume 
 $\mathcal{G}(I(\mathcal{H}))=\{f_1=x_1x_2,f_2=x_2x_3,f_3=x_1x_3\}$ and  $\mathcal{G}(I(\mathcal{H}'))=\{f_1=x_1x_2,f_3=x_1x_3\}$. 
 Let $m\in(I^{k+1}:I)$ be a monomial and $k\geq 1$. Suppose $m=m'x^{\alpha} y^{\beta}$ with $\mathrm{gcd}(m',x^\alpha y^\beta)=1.$ 
 We thus get  
  \begin{equation} \label{16}
    mf_jy=\ell_{j1} (f_1x)^{\beta_{j1}}(f_2x)^{\beta_{j2}}(f_3x)^{\beta_{j3}}(f_1y)^{\beta_{j4}}(f_3y)^{\beta_{j5}} \ \text{for} \ j=1,3,
\end{equation}
and 
\begin{equation} \label{17}
 mf_ix=\ell_{i2} (f_1x)^{\alpha_{i1}}(f_2x)^{\alpha_{i2}}(f_3x)^{\alpha_{i3}}(f_1y)^{\alpha_{i4}}(f_3y)^{\alpha_{i5}} \ \text{for}  \ i=1,2,3, 
\end{equation}
where $\alpha_{i1}+\cdots+\alpha_{i5}=k+1$ for $i=1,2,3$, $\beta_{j1}+\cdots+\beta_{j5}=k+1$ for $ j=1,3$, and $\ell_{j1}$ (for $j=1,3$)  
and   $\ell_{i2}$ (for $j=1,2,3$)  are monomials. Here, we consider the following subcases:
 \begin{enumerate}
 \item[$\bullet$]    $\beta_{14}>0$ or $\beta_{35}>0$. Then  (\ref{16}) implies that $m\in  I^k$.
   \item[$\bullet$]    $\beta_{11}>0$  and $\beta_{15}>0$. Due to $(f_1x)(f_3y)=(f_1y)(f_3x)$, we can deduce from  (\ref{16})  that $m\in I^k$. 
  \item[$\bullet$]  $\beta_{33}>0$  and $\beta_{34}>0$. It follows from $(f_3x)(f_1y)=(f_3y)(f_1x)$ and (\ref{16})  that $m\in I^k$.  
 \end{enumerate}
     Now, we suppose  that none of the above subcases  are   satisfied.  Then we obtain the following
   \begin{itemize}
   \item[$\bullet$]  $\beta_{14}=0$ and  $\beta_{35}=0$. 
       \item[$\bullet$]   $\beta_{11}=0$  or  $\beta_{15}=0$.
   \item[$\bullet$]    $\beta_{33}=0$  or $\beta_{34}=0$.
      \end{itemize}
Let  $\beta_{11}=0$ and $\beta_{15}=0$. As  $\beta_{14}=0$,   we get $mf_1y=\ell_{11} (f_2x)^{\beta_{12}}(f_3x)^{\beta_{13}},$
 where   $\beta_{12} + \beta_{13}=k+1$. In particular, we must have   $y \mid \ell_{11}$ and $\alpha \geq k+1$. 
 Dividing (\ref{17}) by $x^{\alpha+1} y^\beta$ gives   that 
 $$m'f_i=\ell'_{i2} f_1^{\alpha_{i1}+\alpha_{i4}} f_2^{\alpha_{i2}} f_3^{\alpha_{i3}+\alpha_{i5}} \ \text{for}  \ i=1,2,3.$$
 We thus obtain   $ m'\in (J^{k+1}:J)$. Thanks to  $J$ has  the strong persistence property, one has   $m'\in J^k$, and so  
$m'=\ell'' f_1^{\gamma_1}f_2^{\gamma_2} f_3^{\gamma_3}$  with $ \gamma_1+\gamma_2 +\gamma_3=k$ and $\ell''$ is a monomial. 
Consequently, we can deduce that   $$ m=m'x^\alpha y^\beta =\ell''x^{\alpha-k} y^\beta (xf_1)^{\gamma_1}(xf_2)^{\gamma_2} (xf_3)^{\gamma_3} \in I^k.$$ 
      A similar argument can be repeated for the case in which  $\beta_{33}=0$ and $\beta_{34}=0$. 
      
     Let  $\beta_{11}>0$ and $\beta_{15}=0$. Since $\beta_{14}=0$,  it follows from (\ref{16}) that 
$$mf_1y=\ell_{11}(f_1x)^{\beta_{11}}(f_2x)^{\beta_{12}}(f_3x)^{\beta_{13}}.$$ 
Then $y\mid \ell_{11}$.  Since $\beta_{11}-1+\beta_{12}+\beta_{13}=k$, we can derive that 
  $$m=\frac{\ell_{11}}{y}x(f_1x)^{\beta_{11}-1}(f_2x)^{\beta_{12}}(f_3x)^{\beta_{13}} \in I^k.$$ Therefore, from now on, we assume that 
  $\beta_{11}=0$ and   $\beta_{15}>0$. 
A similar argument can be repeated for the case in which  $\beta_{33}>0$ and $\beta_{34}=0$. Henceforth, we  let  $\beta_{33}=0$ and $\beta_{34}>0$. 

   We can therefore  reduce equations (\ref{16})   to the following equalities
\begin{equation} \label{18}
    m(x_1x_2y)=\ell_{11} (x_2x_3x)^{\beta_{12}}(x_1x_3x)^{\beta_{13}}(x_1x_3y)^{\beta_{15}} 
\end{equation}
and 
\begin{equation} \label{19}
    m(x_1x_3y)=\ell_{31}  (x_1x_2x)^{\beta_{31}}(x_2x_3x)^{\beta_{32}}(x_1x_2y)^{\beta_{34}}
\end{equation}
   with  $\beta_{12}+\beta_{13}+\beta_{15}=k+1$, where $\beta_{15}>0$, and $\beta_{31}+\beta_{32}+\beta_{34}=k+1$, where  $\beta_{34}>0$.
   According to equations (\ref{18}) and (\ref{19}), we get the following inequalities
       \begin{enumerate}
    \item[(i)] $\deg_{x_1}m \geq\max\{\beta_{15}-1,\beta_{34}-1\}$
    \item[(ii)] $\deg_{x_2}m  \geq k+1$ and $\deg_{x_3}m\geq k+1$. 
    \item[(iii)] $\alpha=\deg_{x}m \geq\beta_{12}+\beta_{13}$.
    \item[(iv)] $\beta=\deg_{y}m \geq\beta_{15}-1$.
\end{enumerate}
From (i)-(iv), one can conclude that $x_1^{\beta_{15}-1}x_2^{k+1}x_3^{k+1}x^{\alpha}y^{\beta}\mid m.$ 
   Put $L:=x^{\alpha-\beta_{12}-\beta_{13}}y^{\beta-\beta_{15}+1}x_2^{k+1-\beta_{12}-\beta_{13}}x_3^{k+2-\beta_{12}-\beta_{13}-\beta_{15}}$. 
   On account of 
   $$x_1^{\beta_{15}-1}x_2^{k+1}x_3^{k+1}x^{\alpha}y^{\beta}=L(x_1x_3y)^{\beta_{15}-1}(x_2x_3x)^{\beta_{12}+\beta_{13}}\in I^k,$$
   this implies that $m\in I^k$, as required.

\bigskip
{\bf Case (III):}   $|\mathcal{G}(I(\mathcal{H}))|=|\mathcal{G}(I(\mathcal{H}'))|=2$. Without loss of generality, we can assume 
$\mathcal{G}(I(\mathcal{H}))=\{x_1x_2, x_2x_3\}$ and  $\mathcal{G}(I(\mathcal{H}'))=\{x_1x_2,x_1x_3\}$.
Then  the proof is similar to Case (II).  This finishes the proof. 
\end{proof}


\begin{lemma}\label{strongly persistent.Lem.3}
 Let $ X=\{x,y,x_1,x_2,x_3\} $ be a finite set of cardinality $5$, and  $\mathcal{C}$ be a clutter on $X$ such that if $e\in  E(\mathcal{C})$, then 
  $|e|=3$, $\{x,y\}\nsubseteq e$, and $\{x_1,x_2,x_3\}\in E(\mathcal{C})$.  Then $\mathcal{C}$ is strongly persistent.
\end{lemma}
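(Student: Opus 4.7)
Let me set up notation as in the previous lemma. Put $I := I(\mathcal{C})$, $M := x_1 x_2 x_3 \in \mathcal{G}(I)$, $a_1 := x_1 x_2$, $a_2 := x_2 x_3$, $a_3 := x_1 x_3$, and $J := (a_1, a_2, a_3)$; by Proposition \ref{RT-Pro-8}, $J$ has the strong persistence property. Let $\mathcal{H}$ and $\mathcal{H}'$ be defined exactly as in the proof of Lemma \ref{strongly persistent.Lem.2}, so that $|E(\mathcal{C})| = 1 + |E(\mathcal{H})| + |E(\mathcal{H}')|$ and $\mathcal{G}(I)$ consists of $M$ together with monomials of the form $a_i x$ (one for each edge of $\mathcal{H}$) and $a_i y$ (one for each edge of $\mathcal{H}'$). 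The first step is to dispose of the small and degenerate cases: if $|E(\mathcal{C})| \le 3$ then Proposition \ref{RT-Pro-8} finishes the argument, and by the $(x,y)$-symmetry we may then assume $|E(\mathcal{H})| \ge |E(\mathcal{H}')|$, leaving only the cases $(|E(\mathcal{H})|, |E(\mathcal{H}')|) \in \{(3,0),(2,1),(3,1),(2,2),(3,2),(3,3)\}$. The case $(3,0)$ is immediate because, up to the unused variable $y$, the clutter $\mathcal{C}$ is exactly the one produced by applying Proposition \ref{strongly persistent.Pro.1} to the triangle clutter on $\{x_1, x_2, x_3\}$ with cone vertex $x$.

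For each remaining case the plan is to take an arbitrary $m \in (I^{k+1}:I)$ with $k \geq 1$, write $m = m' x^\alpha y^\beta$ with $\gcd(m', xy) = 1$, and expand each membership relation $m f \in I^{k+1}$ as $m f = M^{c_0} \prod_j f_j^{c_j} N$ with $\sum_j c_j = k + 1$. If some expansion uses $M$ (that is, $c_0 \geq 1$), then either one concludes $m \in I^k$ at once, or one first applies the algebraic identity
\[
M \cdot z \;=\; (a_i z) \cdot x_\ell \qquad (a_i = x_j x_k,\ z \in \{x,y\},\ \{j,k,\ell\} = \{1,2,3\}),
\]
which absorbs each factor of $M$ into a cone generator times a single $x_\ell$, and again one concludes $m \in I^k$. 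If in contrast every useful expansion has $c_0 = 0$, then each factor $f_j$ contributes exactly one power of $x$ or $y$, forcing $\alpha + \beta \ge k+1$ and sharp lower bounds on the individual degrees $\deg_{x_i} m$. Combining these bounds across all generators will, as in Lemma \ref{strongly persistent.Lem.2}, imply $m' \in (J^{k+1} : J) = J^k$; writing $m' = \ell \cdot a_1^{\gamma_1} a_2^{\gamma_2} a_3^{\gamma_3}$ with $\gamma_1 + \gamma_2 + \gamma_3 = k$ and redistributing the surplus powers of $x$ and $y$ among the $a_i$'s then yields $m = m' x^\alpha y^\beta \in I^k$.

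The hard part will be the most symmetric case $(3,3)$: here $M$ interacts simultaneously with both cone structures, the available identities between generators proliferate, and the bounds that must be squeezed out of the constraint equations are tightest. My plan for this case is to process $mM \in I^{k+1}$, the three equations $m(a_i x) \in I^{k+1}$, and the three equations $m(a_i y) \in I^{k+1}$ in parallel, use the identity above to eliminate each occurrence of $M$ in a factorization in favor of a cone generator times an $x_\ell$, and then mirror the subcase structure of Cases~(I)--(III) of Lemma \ref{strongly persistent.Lem.2}; the intermediate cases $(2,1), (3,1), (2,2), (3,2)$ follow the same template with strictly fewer subcases to verify, completing the proof that $(I^{k+1} : I) = I^k$ and hence that $\mathcal{C}$ is strongly persistent.
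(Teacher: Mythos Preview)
Your plan diverges substantially from the paper's proof, and as written it is a sketch with real gaps rather than a proof.

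The paper does \emph{not} split by $(|E(\mathcal{H})|,|E(\mathcal{H}')|)$ at all. Instead it observes that every generator $f$ of $I$ satisfies $\deg_{x_1x_2x_3} f \ge 2$ and, for any pair $\{s,p\}\subset\{1,2,3\}$, $\deg_{x_sx_p} f \ge 1$; hence every monomial in $I^{k+1}$ satisfies $\deg_{x_1x_2x_3}\ge 2k+2$ and $\deg_{x_sx_p}\ge k+1$. It then analyzes only the expansion $mf_0 = \ell_0 f_1^{\beta_{01}}\cdots f_t^{\beta_{0t}}$ (assuming $\beta_{00}=0$, else done), does a short case split on which of $x_1,x_2,x_3$ divide $\ell_0$, and in each case exhibits a specific product $m f_i$ that violates one of the two degree lower bounds above. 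This degree-counting argument is uniform over all values of $(|E(\mathcal{H})|,|E(\mathcal{H}')|)$ and is much shorter than re-running the machinery of Lemma~\ref{strongly persistent.Lem.2} six times.

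There are concrete gaps in your outline. First, your claim that ``$c_0\ge 1$ in some expansion implies $m\in I^k$, possibly via $M\cdot z=(a_iz)\cdot x_\ell$'' is not justified: that identity needs a spare $z\in\{x,y\}$ in the leftover $N$ \emph{and} needs $a_iz$ to be an actual generator of $I$. If, say, $m(a_1x)=M\cdot(a_2x)^k\cdot N$ with $\gcd(N,xy)=1$, neither hypothesis holds and you cannot read off $m\in I^k$ from this equation alone (one must return to the $mM$ equation and use $\deg_{xy}$ to force $\beta_{00}\ge 1$ there---but you never say this). Second, the final ``redistribution'' step is not guaranteed: from $m'=\ell\, a_1^{\gamma_1}a_2^{\gamma_2}a_3^{\gamma_3}$ you want to attach $x$'s and $y$'s to turn each $a_i$ into a generator of $I$, but in asymmetric cases not every $a_iz$ is a generator, so $\alpha+\beta\ge k$ alone does not ensure the right split of $x$'s versus $y$'s. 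Third, to obtain $m'\in(J^{k+1}:J)$ you need $m'a_i\in J^{k+1}$ for \emph{every} $i=1,2,3$, yet when some $a_i$ lies in neither $E(\mathcal{H})$ nor $E(\mathcal{H}')$ there is no cone-generator equation to divide by; you would have to extract this from $mM$, which you do not address. Each gap may be patchable, but the resulting argument would be considerably longer than the paper's degree-bound trick, which sidesteps all three issues at once.
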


\begin{proof} 
 One can easily see that  
\begin{align*}
E(\mathcal{C}) \setminus   \{x_1, x_2, x_3\}   \subseteq \{&\{x, x_1 , x_2\}, \{x, x_1 , x_3\}, \{x, x_2 , x_3\}, \{y, x_1 , x_2\},\\
& \{y, x_1 , x_3\}, \{y, x_2 , x_3\}\}.
\end{align*}
Hence, $x\in e$ or $y\in e$ (not both of them) for each $e\in E(\mathcal{C})\setminus \{x_1, x_2, x_3\}$. 
 Put $I:=I(\mathcal{C})$, $f_0=x_1x_2x_3$, $\mathcal{G}(I)=\{f_0, f_1,\ldots, f_t\}$, and  $J:=(f_1, \ldots, f_t)$. 
 Also, for convenience of notation, we set $\{r,s,p\}=\{1,2,3\}$, in particular, we have $f_0=x_rx_sx_p$. We first  note that if $M\in I^k$ is a monomial, then $\deg_{x_1x_2x_3}M\geq 2k$  
 and $\deg_{x_1x_2x_3}{(x_sx_p)}\geq k$. Fix $k\geq 1 $ and take a monomial  $m\in(I^{k+1}:I)$. We can assume that 
 $m=m'x^{\alpha} y^{\beta}$ with $\mathrm{gcd}(m',x^\alpha y^\beta)=1.$ Then we have
 \begin{equation}  \label{15}
   mf_i=\ell_i f_0^{\beta_{i0}}  f_1^{\beta_{i1}}\cdots f_t^{\beta_{it}}  \ \text{for} \ i=0,1,\ldots, t,
\end{equation}
where $\beta_{i0}+ \beta_{i1}+ \cdots+\beta_{it}=k+1$ for each $i=0,1, \ldots,t$ and some monomial $\ell_i$. Here, we consider the following subcases:
\begin{enumerate}
 \item[$\bullet$]   $\beta_{00}>0$ or $f_0\mid \ell_0$.  Then  (\ref{15}) implies that $m\in  I^k$.
    \item[$\bullet$]  $f_0\mid \ell_0f_{j}$ for some $1\leq j \leq t$. According to (\ref{15}), we can deduce that $m\in  I^k$.
  \end{enumerate}
 Suppose, on the contrary,   that none of the above subcases  are   satisfied, and seek a contradiction.  Then we have  the following
   \begin{itemize}
   \item[$\bullet$]  $\beta_{00}=0$ and $f_0\nmid \ell_0$. 
      \item[$\bullet$]  $f_0\nmid \ell_0f_{j}$ for each $1\leq j \leq t$.  
   \end{itemize}
As $\beta_{00}=0$, we can conclude from (\ref{15}) that  $mf_0=\ell_0 f_1^{\beta_{01}}\cdots f_t^{\beta_{0t}}$ such that  $x \mid f_i $  or $y \mid f_i$ for  
each $ 1\leq i \leq t$. Because $f_0\nmid \ell_0$, we will  have three cases for $\ell_0$ as follows: 

\bigskip 
{\bf Case (I):} $\ell_0=x_r^{\alpha''}x_s^{\beta''}x^{\alpha'}y^{\beta'}$  with $\alpha''>0$ and $\beta''>0$. By virtue of $x_r\mid \ell_0$, $x_s\mid \ell_0$, 
and $f_0\nmid \ell_0f_{j}$ for each $1\leq j \leq t$, we must have  $x_p \nmid f_{j}$ for each $1\leq j \leq t$ when $\beta_{0j}>0$.    
 This  implies that  $f_{j}=xx_rx_s$ or $f_{j}=yx_rx_s$  when $\beta_{0j}>0$.  Due to $x_p\nmid \ell_0$, we get  $x_p\nmid \ell_0 f_1^{\beta_{01}}\cdots f_t^{\beta_{0t}}$. On the other hand, since $f_0=x_1x_2x_3$ and $\{1,2,3\}=\{r,s,p\}$, 
 we must have $x_p\mid mf_0$, and so  $x_p\mid \ell_0 f_1^{\beta_{01}}\cdots f_t^{\beta_{0t}}$. This leads to a  contradiction.
 
 \bigskip 
{\bf Case (II):} $\ell_0=x_r^{\alpha''}x^{\alpha'}y^{\beta'}$ with  $\alpha''>0$.  Since  $f_0\nmid \ell_0f_{j}$ for each $1\leq j \leq t$, we may have 
 $f_{j}=xx_rx_s$ or $f_{j}=xx_rx_p$ or $f_{j}=yx_rx_s$ or $f_{j}=yx_rx_p$. Then we get the following, where $\gamma_1+\gamma_2+\gamma_3+\gamma_4=k+1$,
    \begin{equation}\label{10}
    m(x_rx_sx_p)=x_r^{\alpha''}x^{\alpha'}y^{\beta'}(xx_rx_p)^{\gamma_1}(xx_rx_s)^{\gamma_2}(yx_rx_p)^{\gamma_3}(yx_rx_s)^{\gamma_4}. 
\end{equation}
 Because  $x_rx_sx_p\mid x_r^{\alpha''}(xx_rx_p)^{\gamma_1}(xx_rx_s)^{\gamma_2}(yx_rx_p)^{\gamma_3}(yx_rx_s)^{\gamma_4}$, without loss  of generality,  assume $\gamma_3>0$ and $\gamma_4>0$. Dividing  (\ref{10}) by $x_1x_2x_3$, we obtain 
\begin{equation}\label{11}
         m=x_r^{\alpha''+1}x^{\alpha'}y^{\beta'+2}(xx_rx_p)^{\gamma_1}(xx_rx_s)^{\gamma_2}(yx_rx_p)^{\gamma_3-1}(yx_rx_s)^{\gamma_4-1}.
    \end{equation}
  Multiplying  (\ref{11}) by  the  monomial $yx_rx_s\in \mathcal{G}(I)$, we have 
  \begin{align*}
  N=& m(yx_rx_s) \\
  =&x_r^{\alpha''+1}x^{ \alpha'}y^{\beta'+2}(xx_rx_p)^{\gamma_1}(xx_rx_s)^{\gamma_2}(yx_rx_p)^{\gamma_3-1}
  (yx_rx_s)^{\gamma_4} \in I^k.
  \end{align*}  
Since $\deg_{x_sx_p} N=k < k+1 \leq \deg_{x_sx_p} u$, for any monomial $u\in I^{k+1}$, this implies that  $N\notin I^{k+1}$. We thus get 
$m\notin (I^{k+1}:I)$, which is a  contradiction.  

\bigskip 
{\bf Case (III):}  $\ell_0=x^{\alpha'}y^{\beta'}.$  Then we have the following 
   \begin{equation}\label{12}
m(x_rx_sx_p)=x^{\alpha'}y^{\beta'}f_1^{\beta_{01}}\cdots f_t^{\beta_{0t}}.
\end{equation} 
Dividing  (\ref{12})  by $x_1x_2x_3$ and multiplying by $xx_rx_s$, we obtain 
$$M=m(xx_rx_s)=\frac{x^{\alpha'}y^{\beta'}f_1^{\beta_{01}}\cdots f_t^{\beta_{0t}}(xx_rx_s)}{x_1x_2x_3}.$$ 
It is easy to see that   $\deg_{x_1x_2x_3} M=2k+1<2k+2$. This yields that  $M\notin I^{k+1}$, and so $m\notin (I^{k+1}:I)$. This is a contradiction. 

We therefore  conclude that  $m\in I^k$, and the proof is complete.
 \end{proof}


\begin{lemma} \label{Lem.Unique}
 Let $ \mathcal{C}=(V,E)$  be a clutter  with $V(\mathcal{C})=\{x,y,x_1,x_2,x_3\}$  satisfying the following conditions:  
\begin{enumerate}
\item[(1)] $ |e|=3$ for each $e\in E(\mathcal{C})$;
\item[(2)] There exists a unique edge $e \in E(\mathcal{C})$ such that $\{x,y\}\subseteq e$;
\item[(3)] $e\cap\{x,y\}\neq\emptyset$ for each $e\in E(\mathcal{C})$.
\end{enumerate}
Then the following statements hold.
\begin{enumerate}
\item[(i)] $\mathcal{C}$ is strongly persistent.
\item[(ii)] $\mathcal{C}'$ is strongly persistent, where $E(\mathcal{C}')=E(\mathcal{C})  \cup\{\{x_1,x_2,x_3\}\}$.

\end{enumerate}
\end{lemma}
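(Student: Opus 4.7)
The plan is to mimic the case-by-case monomial arguments used in the proofs of Lemmas \ref{strongly persistent.Lem.2} and \ref{strongly persistent.Lem.3}, incorporating the new feature that one edge contains both $x$ and $y$. By condition (2) together with a relabeling of $x_1, x_2, x_3$, one may assume the unique edge containing $\{x,y\}$ is $f_0 := xyx_1$. Conditions (1) and (3) then force every other edge to have size $3$ and to contain exactly one of $x$ or $y$, so the remaining edges lie in $\{xx_1x_2, xx_1x_3, xx_2x_3, yx_1x_2, yx_1x_3, yx_2x_3\}$. For part (ii) we additionally have the generator $f_\infty := x_1x_2x_3$.

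For part (i), set $I := I(\mathcal{C})$ with $\mathcal{G}(I) = \{f_0, f_1, \ldots, f_r\}$, fix $k \geq 1$, pick a monomial $m \in (I^{k+1}:I)$, and write $m = m'x^\alpha y^\beta$ with $\gcd(m', xy) = 1$. For each $f_i$ one obtains a decomposition $mf_i = \ell_i f_0^{\beta_{i0}}\prod_{j=1}^r f_j^{\beta_{ij}}$ with $\sum_j \beta_{ij} = k+1$. The first reduction is: if $\beta_{00} \geq 1$, or more generally if $f_0 \mid \ell_0 f_j$ for some $j$, then $m \in I^k$ immediately. With these easy subcases discharged, the remaining analysis splits along the same Cases (I)--(III) of Lemma \ref{strongly persistent.Lem.2}, distinguished by the sizes of $|\mathcal{G}(I(\mathcal{H}))|$ and $|\mathcal{G}(I(\mathcal{H}'))|$ (the graphs on $\{x_1,x_2,x_3\}$ induced by the $x$- and $y$-containing non-$f_0$ edges). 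In each such case the equations $mf_i = \ell_i \prod f_j^{\beta_{ij}}$ may be divided by an appropriate power of $x$ and $y$ to yield a relation $m' g_i = \ell_i' \prod g_j^{\gamma_{ij}}$ inside $K[x_1,x_2,x_3]$, so that $m' \in (J^{k+1}:J)$ for a small ideal $J$ whose strong persistence is immediate, either by Theorem \ref{RT-Th-9} or by Proposition \ref{RT-Pro-8}. Multiplying back by the extracted powers of $x,y$ then gives $m \in I^k$.

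For part (ii) the same strategy is executed with the additional generator $f_\infty = x_1 x_2 x_3$. One first reduces to $\beta_{\infty,0} = 0$ and $f_0, f_\infty \nmid \ell_\infty$, then replays the three subcases of Lemma \ref{strongly persistent.Lem.3} (distinguished by the shape of $\ell_\infty$) with $f_0$ adjoined to the available building blocks. In each subcase, either a support argument immediately produces $m \in I^k$, or a degree count in $x_1 x_2 x_3$ violates the defining membership $mf_i \in I^{k+1}$ and yields a contradiction. The main obstacle is the sheer bookkeeping: the generator $f_0 = xyx_1$ interacts with the other allowed $3$-edges through exchange relations of the form $(xyx_1)(xx_2x_3) = (xx_1x_2)(xyx_3)$, which are only legitimately usable when the target edge actually belongs to $\mathcal{C}$, so each subcase of Lemmas \ref{strongly persistent.Lem.2} and \ref{strongly persistent.Lem.3} has to be re-examined separately with $f_0$ in play. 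Once this bookkeeping is carried out, the verification reduces to monomial manipulations fully parallel to those already performed in the two previous lemmas.
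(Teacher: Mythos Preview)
Your plan for part (i) diverges from the paper's argument and has a real gap. The paper does \emph{not} redo the Cases (I)--(III) of Lemma~\ref{strongly persistent.Lem.2}. Instead it writes $J=(f_2,\ldots,f_r)$ for the sub-clutter obtained by deleting the unique $xy$-edge $f_1=xyx_1$, and splits into two situations: if $\alpha_{i,1}=0$ for every $i\geq 2$ then $m\in (J^{k+1}:J)=J^k$ by invoking Lemma~\ref{strongly persistent.Lem.2} as a black box; otherwise some $\alpha_{i_0,1}>0$, and the paper runs a short direct argument on the shape of $\ell_1$ in $mf_1=\ell_1 f_2^{\alpha_{1,2}}\cdots f_r^{\alpha_{1,r}}$. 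The key new idea, absent from both Lemma~\ref{strongly persistent.Lem.2} and Lemma~\ref{strongly persistent.Lem.3}, is a degree count in $\deg_{x_1y}$ (respectively $\deg_{x_1x}$): under the reductions one shows $\deg_{x_1y}m=k-1$, and then evaluates $mf_p$ for a carefully chosen $f_p=yx_2x_3$ to force $\deg_{x_1y}(mf_p)\geq k+1$, a contradiction. This is not ``fully parallel'' to the earlier lemmas.

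Your proposed route---divide all relations by powers of $x,y$ and land in $(J^{k+1}:J)$ for some $J\subset K[x_1,x_2,x_3]$---breaks at exactly the point you gloss over. You discharge only $\beta_{00}\geq 1$, but for $i\geq 1$ the factor $f_0=xyx_1$ may still occur with $\beta_{i0}>0$. If your $J$ omits $g_0=x_1$, then $m'g_i=\ell_i' x_1^{\beta_{i0}}\prod_{j\geq 1} g_j^{\beta_{ij}}$ lies only in $J^{k+1-\beta_{i0}}$, not $J^{k+1}$, so you cannot conclude $m'\in(J^{k+1}:J)$. If instead you include $g_0=x_1$ in $J$, then the ``multiply back'' step fails: lifting $g_0=x_1$ to $f_0=xyx_1$ costs one $x$ \emph{and} one $y$, so a representation $m'=L\prod g_j^{\delta_j}$ requires budget $\alpha+\beta\geq k+\delta_0$, whereas you only know $\alpha+\beta\geq k$. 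Neither horn is handled by your sketch.

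For part (ii) your outline is closer in spirit to the paper, but the paper's execution is again more focused: it looks only at the two products $mf_0$ and $mf_t$ (with $f_0=x_1x_2x_3$ and $f_t=xyx_1$), derives the inequalities $\deg_{x_rx_s}(mf_t)\geq k+1$ and $\deg_{x_1x_2x_3}(mf_t)\geq 2k+2$ from the decomposition of $mf_t$, and then cases on the support of $\ell_0$ to obtain a contradiction with one of these. It does not replay all of Lemma~\ref{strongly persistent.Lem.3}.
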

\begin{proof}
(i) We set $I(\mathcal{C})=\{f_1,\ldots,f_r\}$, where $f_1=xyx_1$ is associated to the unique  edge containing $x$ and $y$.
 Let $m\in(I^{k+1}:I)$ be a monomial, where $k\geq 1$. Then, for all $i=1, \ldots, r$,  we can write 
\begin{equation}\label{eq13}
 mf_i=\ell_i f_1^{\alpha_{i,1}}f_2^{\alpha_{i,2}}\cdots  f_r^{\alpha_{i,r}}, 
\end{equation}
where $\sum_{j=1}^r\alpha_{i,j}=k+1$ and $\ell_i$ is a monomial. Let  $J:=(f_2, \ldots, f_r)$.  If  $\alpha_{i,1}=0$ for all  $i=2,\ldots, r$, then
  $m\in (J^{k+1}:J)=J^k$ due to  Lemma \ref{strongly persistent.Lem.2}. 
    Similarly, if $\alpha_{i,i}>0$ for some $1\leq i \leq r$, then  we obtain $m\in I^k$. Hence, we can assume that $\alpha_{i,i}=0$ for  all $i=1,\ldots,r$ and $\alpha_{i_0,1}>0$ for some $2\leq i_0 \leq r$. Thus, we have $\deg_{xy}(mf_{i_0})\geq 2\alpha_{i_0,1}+\alpha_{i_0,2}+\cdots+ \alpha_{i_0,r}\geq k+2$. 
    Now, writing  (\ref{eq13}) for the particular case $i=1$, we get the following 
\begin{equation}\label{eq15}
     m(xyx_1)=\ell_{1}f_2^{\alpha_{1,2}}\cdots  f_r^{\alpha_{1,r}}. 
 \end{equation}
 If $xy \mid \ell_1$ or $xx_1 \mid \ell_1$ or $yx_1 \mid \ell_1$, then we can deduce that $m\in I^k$. 
Therefore, we  can assume that $\ell_1\in\{x^\alpha x_2^\beta x_3^\gamma,y^\alpha x_2^\beta x_3^\gamma, x_1^\alpha x_2^\beta x_3^\gamma\}$. 
Here, one may consider the  following cases:

\bigskip
\textbf{Case 1.}
$\ell_1=x_1^\alpha x_2^\beta x_3^\gamma$. It follows from (\ref{eq15}) that  $\deg_{xy}m=k-1$ and $\deg_{xy}(mf_{i_0})=k$. This leads to 
 a contradiction.

\bigskip
\textbf{Case 2.}
 $\ell_1=x^\alpha x_2^\beta x_3^\gamma$. If $ \alpha=0$, then $ \ell_1=x^\alpha x_2^\beta x_3^\gamma=x_1^\alpha x_2^\beta x_3^\gamma$, and we have Case $1$. Thus, we assume that $\alpha>0$. Since $x_1\nmid \ell_1$, this implies that  $x_1\mid f_j$ for some $2\leq j \leq r$ such that $\alpha_{1,j}>0$. If $y\mid f_j$, then $f_1\mid \ell_1f_j$, and so $m\in I^k$. Hence,  we can assume  $f_j=x_1x_sx$ with $s\in\{2,3\}$. 
 Due to  $y\mid \ell_{1}f_2^{\alpha_{1,2}}\cdots  f_r^{\alpha_{1,r}}$ and $y\nmid \ell_1$, this yields that there exists some $2\leq p\neq j \leq r$ with 
  $\alpha_{1,p}>0$  such that $y\mid f_p$. If $x_1 \mid f_p$, then $f_1 \mid \ell_1f_p$, and so $m\in I^k$. We thus  can assume that 
 $f_p=x_2x_3y$. Hence, we derive that    $\ell_1 f_jf_p=(x^{\alpha-1} x_2^\beta x_3^\gamma x_s)(xyx_1)(xx_2x_3)$. 
  If $xx_2x_3\in I(\mathcal{C})$, then we get  $m\in I^k$. Hence, we let  $xx_2x_3\notin I(\mathcal{C})$.

 Collecting the above assumptions enables us to conclude that for all $c=2, \ldots, r$ with  $\alpha_{1,c}>0$, we have  the following:
 \begin{enumerate}
  \item[$\bullet$] $x_1\mid f_c$ and  $y\nmid f_c$, or
  \item[$\bullet$] $x_1\nmid f_c$ and $y\mid f_c$.
 \end{enumerate}
   We thus  obtain $\deg_{x_1y} m=k-1$ and $\deg_{x_1y}(mf_p)=k$. Considering (\ref{eq13}) for $i=p$, we can deduce that 
   $m(x_2x_3y)=\ell_p f_1^{\alpha_{p,1}}\cdots  f_r^{\alpha_{p,r}}.$
   Recall that  $\alpha_{pp}=0$ and $xx_2x_3\notin I(\mathcal{C})$. If $x_1\nmid  f_j$ and $y\nmid f_j$ for some $1\leq  j \leq r$, then  $f_j=xx_2x_3$, 
   which is a   contradiction. Hence,  $x_1\mid  f_j$ or $y\mid f_j$ for each $1\leq  j \leq r$, and so $\deg_{x_1y}(mf_p)\geq k+1$. This gives   a contradiction. 

\bigskip
\textbf{Case 3.}
$\ell_1=y^\alpha x_2^\beta x_3^\gamma$. The argument  is similar to Case  $2$.

Accordingly, $m\in I^k$, and so the clutter  $\mathcal{C}$ is strongly persistent.

\bigskip
(ii) Set $I:=I(\mathcal{C}')=(f_0=x_1x_2x_3, f_1,\ldots, f_{t-1}, f_t=x_cxy)$, where $x_c\in \{x_1, x_2, x_3\}$.
   Without loss of generality,  assume  that   $f_t=x_1xy$.   Let $k\geq 1$ and take an arbitrary monomial  $m\in(I^{k+1}:I)$. 
  Now, we write $m=m'x^{\alpha} y^{\beta}$ with  $\mathrm{gcd}(m',x^\alpha y^\beta)=1$. Then, for all $i=0,\ldots,t$, we have the following   
\begin{equation}\label{unique.1}
    mf_i=\ell_i f_0^{\beta_{i0}} f_1^{\beta_{i1}}\cdots f_t^{\beta_{it}},
\end{equation}
where $\beta_{i0}+\cdots+\beta_{it}=k+1$ and $\ell_i$ is a monomial. 
   If $\beta_{00}>0$ or $\beta_{tt}>0$, then we can deduce from (\ref{unique.1}) that  $m\in I^k$. Hence, we let  
  $\beta_{00}=0$ and  $\beta_{tt}=0$.
 We  can now deduce from  (\ref{unique.1}) and the assumptions (2) and (3) that 
 \begin{equation}\label{unique.2}
     mf_0=\ell_0 {f_1}^{\beta_{01}}\cdots  {f_{t-1}}^{\beta_{0,t-1}}(x_1xy)^{\beta_{0t}},
  \end{equation}
   and 
  \begin{equation}\label{unique.3}
  mf_t=\ell_t {f_0}^{\beta_{t0}} {f_1}^{\beta_{t1}}\cdots  {f_{t-1}}^{\beta_{t,t-1}},
  \end{equation}
where $ x \mid f_i $  or $ y \mid f_i$ (not both of them) for  all $i=1, \ldots, t-1$.  Assume  that  $\{x_r, x_s, x_p\}=\{x_1,x_2,x_3\}$.  
  It is easy to see that  $|\mathrm{supp}(f) \cap\{x_r, x_s\}|\geq 1$ for each $f\in \mathcal{G}(I) \setminus\{f_t\}$. 
  Hence, (\ref{unique.3}) yields that   $\deg_{x_rx_s}(mf_t)\geq k+1$ and $\deg_{x_1x_2x_3}(mf_t)\geq 2k+2$. 
  Now,  we can write $\ell_0=x_r^{\alpha''}x_s^{\beta''}x_p^{\gamma''} x^{\alpha'} y^{\beta'}$. 
If $x_1x_2\mid \ell_0$ or  $x_1x_3\mid \ell_0$ or $x_2x_3 \mid \ell_0$, then in view of (\ref{unique.2}), we get  $x_1x_2x_3\mid \ell_0f_j$ for some 
$1\leq j \leq t$. This implies that  $m \in I^k$.   Thus, without loss of generality, assume that $\ell_0=x_r^{\alpha''}x^{\alpha'}y^{\beta'}.$ 
Let  $ \alpha''>0$. If there exists some $1\leq j \leq t-1$ with $\beta_{0j}>0$ such that  $f_j=x_sx_px$ or $f_j=x_sx_py$, then
  $f_0\mid \ell_0f_j$, and (\ref{unique.2}) gives  that  $m \in I^k$. 
   We therefore assume that for all $j=1, \ldots, t-1$ with $\beta_{0j}>0$ we have   $f_j\neq x_sx_px$ and  $f_j \neq x_sx_py$. 
 Hence, we obtain  $|\mathrm{supp}(f_j)\cap\{x_r,x_p\}|\leq 1$ for all $j=1, \ldots, t$ with $\beta_{0j}>0$. 
  This leads to $\deg_{x_sx_p}m\leq k-1$, and so $\deg_{x_sx_p}(mf_t)\leq k<k+1$, which is  a contradiction.  This means that this case is impossible.
   If $\alpha''=0$, then  $\deg_{x_1x_2x_3} m=2k+2-\beta_{0,t}-3=2k-1-\beta_{0,t}\leq 2k-1$, and consequently  $\deg_{x_1x_2x_3}(mf_t)\leq 2k<2k+2$. 
   This is a contradiction. Thus, this case is also impossible. Consequently, the clutter  $\mathcal{C}'$ is strongly persistent.
\end{proof}


To complete the proof of Theorem  \ref{main.Section3}, we employ  Propositions \ref{211}, \ref{212}, \ref{213}, and Corollary \ref{strong.persistence.2+4}. 

\begin{proposition}\label{211}
    Let $\mathcal{C}$ be  a clutter with $E(\mathcal{C})=\{e_1, e_2, e_3, \ldots, e_r\}$ such that contains edges $e_1=\{x,y\}$ of cardinality two and  $e_2$ such that $e_1\cap e_2=\emptyset$, and $e_j\cap e_1\neq\emptyset$ for each $3 \leq j \leq r$. Then $\mathcal{C}$ is strongly persistent.
\end{proposition}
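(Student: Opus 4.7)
The plan is to follow the strategy used in Lemmas \ref{strongly persistent.Lem.2} and \ref{strongly persistent.Lem.3}: fix $k\geq 1$, take an arbitrary monomial $m\in (I^{k+1}:I)$ with $I=I(\mathcal{C})$, write $m=m'x^{\alpha}y^{\beta}$ with $\gcd(m',xy)=1$, and show $m\in I^k$ by analyzing the two distinguished products $mf_1$ and $mf_2$, where $f_1=xy$ and $f_2$ is the monomial associated to $e_2$. Note first that since $\mathcal{C}$ is a clutter and $e_j\cap e_1\neq\emptyset$ for $j\geq 3$, each such $f_j$ is divisible by exactly one of $x,y$ (otherwise $e_1\subseteq e_j$). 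Let $S_x=\{j\geq 3 \mid x\mid f_j\}$ and $S_y=\{j\geq 3 \mid y\mid f_j\}$; these partition $\{3,\dots,r\}$.

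Writing $mf_1=\ell_1 f_1^{\alpha_{11}}f_2^{\alpha_{12}}\cdots f_r^{\alpha_{1r}}$ with $\sum_j\alpha_{1j}=k+1$, I would dispose of the easy cases quickly: if $\alpha_{11}\geq 1$ or $xy\mid\ell_1$ then $m\in I^k$ directly. If $x\mid\ell_1$ but $y\nmid\ell_1$, write $\ell_1=x\ell_1'$, cancel one $x$ to obtain $my=\ell_1'\prod_{j\geq 2}f_j^{\alpha_{1j}}$, and compare $y$-degrees to get $\beta+1=\sum_{j\in S_y}\alpha_{1j}$. Pick $j_0\in S_y$ with $\alpha_{1,j_0}\geq 1$ and write $f_{j_0}=yg_{j_0}$; the identity $y^{\alpha_{1,j_0}-1}g_{j_0}^{\alpha_{1,j_0}}=g_{j_0}f_{j_0}^{\alpha_{1,j_0}-1}$ rewrites
\[m=\ell_1'\,g_{j_0}\,f_{j_0}^{\alpha_{1,j_0}-1}\prod_{j\geq 2,\,j\neq j_0}f_j^{\alpha_{1j}}\in I^k,\]
since the total exponent of the $f_j$'s equals $k$. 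The symmetric case ($y\mid\ell_1$, $x\nmid\ell_1$) is handled identically.

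The only remaining configuration is $\alpha_{11}=0$ with $x\nmid\ell_1$ and $y\nmid\ell_1$. Comparing $x$- and $y$-degrees in $mxy=\ell_1\prod_{j\geq 2}f_j^{\alpha_{1j}}$ gives $\alpha+1=\sum_{j\in S_x}\alpha_{1j}$ and $\beta+1=\sum_{j\in S_y}\alpha_{1j}$; adding and using the partition $\{3,\dots,r\}=S_x\sqcup S_y$ together with $\sum_{j\geq 2}\alpha_{1j}=k+1$ yields the key bound
\[\alpha+\beta=k-1-\alpha_{12}\leq k-1.\]
This is where the disjointness hypothesis $e_1\cap e_2=\emptyset$ enters: turning to $mf_2=\ell_2 f_1^{\alpha_{21}}f_2^{\alpha_{22}}\prod_{j\geq 3}f_j^{\alpha_{2j}}$, if $\alpha_{22}\geq 1$ then cancellation shows $m\in I^k$. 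Otherwise $\alpha_{22}=0$ and, since $f_2$ contributes nothing to the $x$- or $y$-degree, comparing degrees gives $\alpha\geq\alpha_{21}+\sum_{j\in S_x}\alpha_{2j}$ and $\beta\geq\alpha_{21}+\sum_{j\in S_y}\alpha_{2j}$, whence $\alpha+\beta\geq 2\alpha_{21}+\sum_{j\geq 3}\alpha_{2j}=\alpha_{21}+(k+1)\geq k+1$, contradicting $\alpha+\beta\leq k-1$.

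The main obstacle, and the conceptual core of the argument, is precisely that final step: one must show that after isolating the troublesome residual case from $mf_1$, the disjointness of $e_2$ from $\{x,y\}$ forces enough $x$'s and $y$'s into $mf_2$ to violate the bound obtained from $mf_1$. Once one sees that $f_2$ contributes no $x$ or $y$ at all, both the lower bound from $mf_2$ and the upper bound from $mf_1$ become strict enough to collide. The only other thing to verify is routine: in the clutter, $e_j\not\subseteq e_1$ forces $|e_j|\geq 2$ for $j\geq 3$, so the $g_{j_0}$ used above is a genuine monomial and all divisions are legitimate.
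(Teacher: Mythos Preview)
Your proof is correct and follows essentially the same approach as the paper: both arguments reduce to the residual case $\alpha_{11}=0$, $x\nmid\ell_1$, $y\nmid\ell_1$, deduce $\deg_{xy}m\leq k-1$ from $mf_1$, and then obtain a contradiction from $mf_2$ unless $\alpha_{22}>0$. The paper's version is more terse (it uses $\deg_{xy}$ directly and asserts the case $x\mid\ell_1$ or $y\mid\ell_1$ without the explicit cancellation you wrote out), but the logic is identical.
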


\begin{proof}
 Let $I(\mathcal{C})=(f_1, f_2, f_3, \ldots, f_r)$, where $f_i:=\prod_{z\in e_i}z$ for all $i=1, \ldots, r$ with $f_1=xy$ and 
 $\mathrm{supp}(f_1) \cap \mathrm{supp}(f_2)=\emptyset$.   Take an arbitrary monomial  
  $m\in ({I(\mathcal{C})}^{k+1}: I(\mathcal{C}))$, where $k\geq 1$.  Then,   for all  $i=1,\ldots ,r$, 
  we can write  $mf_i=\ell_i f_1^{\alpha_{i1}}\cdots f_r^{\alpha_{ir}}$ with $ \sum_{j=1}^r \alpha_{ij}=k+1$ and $\ell_i$ is a monomial.
    If $\alpha_{11}>0$  or $x\mid\ell_1 $ or $y\mid\ell_1$, then  we can deduce that  $m\in {I(\mathcal{C})}^k$. Hence, 
    we can assume that none of the preceding assumptions are satisfied.    
    Then we have $\alpha_{11}=0$, $x \nmid \ell_1 $, and $y \nmid \ell_1$. We thus get  $\deg_{xy}m\leq k-1$, and so  $\deg_{xy}(mf_2)\leq k-1$ 
due to      $\mathrm{supp}(f_1) \cap \mathrm{supp}(f_2)=\emptyset$.   If $\alpha_{22}=0$, then we must have  $\deg_{xy}(mf_2)\geq k+1$, which is a contradiction. Accordingly, we have  $\alpha_{22}>0$,
     and hence      $m\in {I(\mathcal{C})}^k$. This completes the proof.
\end{proof}


\begin{corollary}\label{strong.persistence.2+4} 
Let $\mathcal{C}$  be a clutter on $X=\{x_1,x_2,x_3,x_4,x_5\}$ such that $2\leq |e|\leq 3$ for each $e\in E(\mathcal{C})$, and $\mathcal{C}$ has only two edges
$e_1, e_2$   with $|e_1|=|e_2|=2$ and  $e_1 \cap e_2=\emptyset$. Then $\mathcal{C}$ is strongly persistent.  
\end{corollary}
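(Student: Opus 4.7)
The plan is to recognize that this corollary follows almost immediately from Proposition \ref{211} once we check the intersection hypothesis. Write, without loss of generality, $e_1=\{x_1,x_2\}$ and $e_2=\{x_3,x_4\}$, so that $e_1\cup e_2=\{x_1,x_2,x_3,x_4\}$ and only $x_5$ lies outside $e_1\cup e_2$. Let $e_3,\ldots,e_r$ denote the remaining edges of $\mathcal{C}$; by hypothesis each has cardinality three.

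The key step is to show that $e_j\cap e_1\neq\emptyset$ for every $j\geq 3$. Suppose to the contrary that some $e_j$ with $j\geq 3$ satisfies $e_j\cap e_1=\emptyset$. Then $e_j\subseteq X\setminus e_1=\{x_3,x_4,x_5\}$, and since $|e_j|=3$, we are forced to have $e_j=\{x_3,x_4,x_5\}$. But this edge properly contains $e_2=\{x_3,x_4\}$, contradicting the clutter property. Hence every other edge meets $e_1$.

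Now all the hypotheses of Proposition \ref{211} are in place: $e_1$ has cardinality two, $e_1\cap e_2=\emptyset$, and $e_j\cap e_1\neq\emptyset$ for every $j\geq 3$. Invoking Proposition \ref{211} yields that $\mathcal{C}$ is strongly persistent. The only conceivable obstacle is the verification of the intersection condition, but the small vertex set $X$ together with the clutter property trivializes it, so no further combinatorial analysis is required.
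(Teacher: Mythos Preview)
Your proof is correct and follows essentially the same approach as the paper's proof: both verify that every edge other than $e_1,e_2$ meets $e_1$ (the paper states the slightly sharper $|e\cap e_1|=1$, which also uses the clutter property to rule out $e_1\subseteq e$, but only nonempty intersection is needed for Proposition~\ref{211}) and then invoke Proposition~\ref{211}. Your contradiction argument via $e_j=\{x_3,x_4,x_5\}\supsetneq e_2$ is exactly the ``prompt check'' the paper leaves to the reader.
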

\begin{proof} 
One can promptly check that  $|e\cap e_1|=1$ for any $ e\in E(\mathcal{C})\setminus \{e_1,e_2\}$. This implies that the clutter $\mathcal{C}$ satisfies 
the conditions of  Proposition \ref{211}, and so $\mathcal{C}$ is strongly persistent, as desired. 
\end{proof}


\begin{proposition}\label{212}
    Let $X=\{x_1,\ldots,x_r\}$ and $Y=\{y_1,\ldots,y_s\}$  be two disjoint vertex sets and $A \subseteq X$. Let  $\mathcal{C}=(X\cup Y,E)$ be a clutter with  $E(\mathcal{C})=\{\{x_i, y_j\}\mid x_i\in A, y_j\in Y\}\cup\{X\}$. Then $\mathcal{C}$ is strongly persistent.
    \end{proposition}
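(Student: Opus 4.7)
Write $I=I(\mathcal{C})=J+(f_0)$ with $f_0=\prod_{i=1}^{r}x_i$ and $J=(x_iy_j:x_i\in A,\,y_j\in Y)$. Since $J=\mathfrak{p}_A\mathfrak{p}_Y$ where $\mathfrak{p}_A=(x_i:x_i\in A)$ and $\mathfrak{p}_Y=(y_j:y_j\in Y)$, we have $J^k=\mathfrak{p}_A^k\mathfrak{p}_Y^k$, so a monomial $m$ lies in $J^k$ if and only if $\deg_A(m)\geq k$ and $\deg_Y(m)\geq k$. If $A=\emptyset$ or $Y=\emptyset$, then $I=(f_0)$ is principal and trivially strongly persistent, so assume $a:=|A|\geq 1$ and $Y\neq\emptyset$.

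Fix a monomial $m\in(I^{k+1}:I)$ with $k\geq 1$, and set $\alpha_A:=\deg_A(m)$, $\beta_Y:=\deg_Y(m)$. The first step is to exploit $mf_0\in I^{k+1}$: write $mf_0=f_0^{t}g_1\cdots g_p\,M$ with each $g_l\in\mathcal{G}(J)$, $t+p=k+1$, and $M$ a monomial. If $t\geq 1$, cancelling one copy of $f_0$ yields $m=f_0^{t-1}g_1\cdots g_p M\in I^{k}$, and we are done. Otherwise $t=0$ and $mf_0\in J^{k+1}$; comparing $Y$- and $A$-degrees gives $\beta_Y\geq k+1$ and $\alpha_A\geq k+1-a$. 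In this second case, if $\alpha_A\geq k$ we conclude $m\in J^k\subseteq I^k$ immediately.

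The only remaining subcase is $\alpha_A\leq k-1$, which forces $a\geq 2$. Since $\alpha_A<a(k+1)$, some variable $x_{l_0}\in X$ satisfies $\deg_{x_{l_0}}(m)\leq k$, and because $a\geq 2$ we may choose $i\in A$ with $i\neq l_0$ and any $y_j\in Y$. Applying the hypothesis $mf_{ij}\in I^{k+1}$ with $f_{ij}=x_iy_j$, decompose $mf_{ij}=f_0^{t'}g'_1\cdots g'_{p'}M'$ with $t'+p'=k+1$. Comparing $x_{l_0}$-degrees forces $t'\leq\deg_{x_{l_0}}(m)\leq k$, so $p'\geq 1$; comparing $A$-degrees forces $at'+p'\leq\alpha_A+1$. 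Substituting $p'=k+1-t'$ yields $(a-1)t'\leq\alpha_A-k\leq-1$, which is impossible for $a\geq 2$ and $t'\geq 0$. This contradiction rules out the remaining subcase, so $m\in I^k$ in every case.

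The main conceptual obstacle is isolating the correct auxiliary ideal $J=\mathfrak{p}_A\mathfrak{p}_Y$, whose powers are cut out by just two linear degree conditions; once this is in hand the proof is pure degree-counting. The delicate point is the judicious choice of the ``off-edge'' vertex $x_{l_0}$ together with an $A$-vertex $i\neq l_0$, so that the degree match at $x_{l_0}$ caps $t'$ and collides with the $A$-degree budget through the single inequality $(a-1)t'\leq\alpha_A-k$.
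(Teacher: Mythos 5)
Your proof is correct, and it takes a genuinely different route from the paper's. The paper's argument splits on whether $f_0=\prod_{z\in X}z$ occurs in the factorizations of $mf_i$ for the bipartite generators $f_i$: when it never occurs, it reduces to $m\in(J^{k+1}:J)$ and invokes the strong persistence property of edge ideals of simple graphs (an external result) to conclude $m\in J^k$; otherwise it runs a divisibility case analysis (whether $\alpha_{1,1}>0$, $x_1\mid \ell_1$, $y_1\mid\ell_1$, $y_i\mid\ell_1$, \dots), ending with a $\deg_Y$ count that forces $\alpha_{0,0}>0$ in the factorization of $mf_0$ and hence $m\in I^k$. You avoid the external input altogether by observing $J=\mathfrak{p}_A\mathfrak{p}_Y$, so that a monomial lies in $J^k$ exactly when its $A$-degree and $Y$-degree are both at least $k$; your case split is on the factorization of $mf_0$ itself (does $f_0$ appear or not), and the residual case $t=0$, $\alpha_A\le k-1$ is eliminated by a single $A$-degree count applied to $m\,x_iy_j$, namely $k+1=t'+p'\le at'+p'\le\alpha_A+1\le k$, a contradiction. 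This is self-contained, purely degree-theoretic, and arguably cleaner than the paper's argument. Two small remarks: the auxiliary vertex $x_{l_0}$ and the conclusion $p'\ge 1$ are never actually used, since $(a-1)t'\le\alpha_A-k\le -1$ is already absurd because $(a-1)t'\ge 0$; for the same reason the reduction to $a\ge 2$ is unnecessary, as the count works for any nonempty $A$.
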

    
    \begin{proof}
Let $|A|=t$. Without loss of generality, assume that $x_1\in A$. To simplify the notation, we put $I:=I(\mathcal{C})=(f_0, f_1,\ldots,f_{ts})$, where 
    $f_0:=\prod_{z\in X}z$  and  $f_1:=x_1y_1$, and $J:=(f_1,\ldots,f_{ts})$. Fix $k\geq 1$ and let $m\in({I(\mathcal{C})}^{k+1}:I(\mathcal{C}))$  be a monomial. Then we can write
 \begin{equation}\label{1eq}
  mf_i=\ell_i f_0^{\alpha_{i,0}} f_1^{\alpha_{i,1}}\cdots f_{ts}^{\alpha_{i,ts}},
 \end{equation}
  with $ \sum_{j=0}^{ts} \alpha_{i,j}=k+1$   for all $i=0,\ldots ,ts$, and $\ell_i$ is a monomial. 
        If $\alpha_{i,0}=0$ for each $i=1,\ldots, ts$, then we get $m\in (J^{k+1}:J)$. Since $J$ is the edge ideal of a simple graph, this implies that 
 $(J^{k+1}:J)=J^k$, and by virtue of $J^k\subseteq I^k$, we obtain $m\in I^k$.  Thus, without loss of generality, we can assume $\alpha_{1,0}>0$. 
  If  $\alpha_{1,1}>0$ or $x_1\mid \ell_1$ or $y_1\mid\ell_1$, then  we can  conclude that  $m\in I^k.$ So, we additionally assume that $\alpha_{1,1}=0$,  $
  x_1\nmid \ell_1$, and $y_1\nmid\ell_1$. Thus, under this assumptions, (\ref{1eq}) reduces to the following equality
  \begin{equation}\label{2eq}
   m(x_1y_1)=\ell_1 f_0^{\alpha_{1,0}} f_2^{\alpha_{1,2}}\cdots f_{ts}^{\alpha_{1,ts}}.     
     \end{equation}
        
           It follows from (\ref{2eq}) that  $y_1\mid f_j$ for some $2\leq j \leq ts$ with $\alpha_{1,j}>0$. We consider $f_j=x_cy_1$. 
    If $y_i\mid \ell_1$ for some $i\geq 2$, then we can write  
    $$m=\frac{\ell_1}{y_i} \frac{f_0}{x_1} f_0^{\alpha_{1,0}-1} f_2^{\alpha_{1,2}}\cdots \frac{f_j}{x_c}f^{\alpha_{1,j}-1}_j \cdots  f_{ts}^{\alpha_{1,ts}}(x_cy_i). $$
    Since  $x_cy_i\in I$, this leads to $m\in I^k$.     Thus, we  additionally assume that $y_i\nmid \ell_1$ for each $i=1,\ldots, s$. 
     From (\ref{2eq}), we derive that   $\deg_Y m \leq k-1$,  and so $\deg_Y(mf_0)\leq k-1$. 
    Finally, if $ \alpha_{0,0}=0$, then (\ref{1eq}) gives $\deg_Y(mf_0)\geq k+1$, which  is a contradiction. Hence, we must have $\alpha_{0,0}>0$, and 
    therefore $m\in I^k$. This terminates the proof.  
        \end{proof}

\begin{proposition}\label{213}
        Let $X=\{x_1,\ldots,x_r\}$ and $Y=\{y_1,y_2\}$  be  disjoint vertex sets and $A,B \subseteq X$ such that $B\subseteq A$ and $|A\setminus B|=1$. 
 Let  $\mathcal{C}=(X\cup Y,E)$  be a clutter with $E(\mathcal{C})=\{\{x_j,y_1\}\mid x_j\in A\} \cup \{\{x_i,y_2\} \mid x_i\in B\} \cup\{X\}$. 
 Then $\mathcal{C}$ is strongly persistent.
\end{proposition}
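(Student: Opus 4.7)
The plan is to emulate the strategy of Proposition~\ref{212}, but to anchor the argument on the \emph{specific} edge $g^{*} := x_{1}y_{1}$, where $\{x_{1}\}=A\setminus B$, rather than on a generic edge; this choice pre-empts the asymmetry caused by the missing generator $x_{1}y_{2}$. Set $I := I(\mathcal{C})$, $f_{0}:=\prod_{z\in X}z$, and let $J$ be the ideal of the bipartite simple graph formed by the edges of $\mathcal{C}$ other than $f_{0}$; in particular $(J^{k+1}:J)=J^{k}$ by the strong persistence of simple graph edge ideals. Fix $k\ge 1$ and a monomial $m\in(I^{k+1}:I)$, and for each generator $h$ write $mh=\ell_{h}\,f_{0}^{\alpha_{h,0}}\prod_{j\ne 0}f_{j}^{\alpha_{h,j}}$ with $\sum_{j}\alpha_{h,j}=k+1$.

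If $\alpha_{h,0}=0$ can be arranged for every graph edge $h$, then $mJ\subseteq J^{k+1}$, so $m\in J^{k}\subseteq I^{k}$ and we are done. Otherwise, I would focus on the equation for $g^{*}$, namely $mg^{*}=\ell\,f_{0}^{\alpha_{0}}\prod_{j\ne 0}f_{j}^{\alpha_{j}}$, and run the usual reductions: if $\alpha_{g^{*}}\ge 1$, or $x_{1}\mid\ell$, or $y_{1}\mid\ell$, then $m\in I^{k}$ follows immediately. Suppose none of these occur. Since $x_{1}$ divides the right-hand side but the only generators containing $x_{1}$ are $f_{0}$ and $g^{*}$ (the absence of $x_{1}y_{2}$ being precisely $x_{1}\notin B$), we are forced into $\alpha_{0}\ge 1$. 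Likewise, some $f_{j_{0}}=x_{c'}y_{1}$ with $\alpha_{j_{0}}\ge 1$ supplies the $y_{1}$-factor; the exclusion of $g^{*}$ gives $c'\ne 1$, hence $x_{c'}\in A\setminus\{x_{1}\}=B$, and so $x_{c'}y_{2}\in I$. This is the step that crucially uses $|A\setminus B|=1$.

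If $y_{2}\mid\ell$, write $\ell=y_{2}\ell'$ and execute the swap in the spirit of Proposition~\ref{212}: the identity $f_{0}(x_{c'}y_{1})=g^{*}\cdot(x_{c'}x_{2}\cdots x_{r})$ lets us consume one factor each of $f_{0}$ and $f_{j_{0}}$ while producing the new generator $x_{c'}y_{2}$, yielding $m=\ell'(x_{2}\cdots x_{r})(x_{c'}y_{2})\,f_{0}^{\alpha_{0}-1}f_{j_{0}}^{\alpha_{j_{0}}-1}\prod_{\text{other}}f_{j}^{\alpha_{j}}$, a monomial times exactly $k$ generators of $I$, so $m\in I^{k}$. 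If instead $y_{2}\nmid\ell$, then $\deg_{Y}\ell=0$, and the $Y$-degree identity in the $g^{*}$-equation gives $\deg_{Y}m = k-\alpha_{0}\le k-1$; the $f_{0}$-equation $mf_{0}=\ell_{0}\,f_{0}^{\alpha'_{0}}\prod g^{\beta_{g}}$ then forces $\alpha'_{0}\ge 1$, because otherwise its right-hand side would have $Y$-degree at least $k+1$, contradicting $\deg_{Y}(mf_{0})=\deg_{Y}m$. Thus $m\in I^{k}$.

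The principal obstacle is controlling the swap when the $y_{1}$-partner threatens to equal $x_{1}$; the deliberate choice of $g^{*}$ as the distinguished edge is what sidesteps this, since the standard reduction $\alpha_{g^{*}}=0$ automatically rules out the forbidden partner and delivers $x_{c'}\in B$ for free.
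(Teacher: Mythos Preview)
Your proof is correct and follows essentially the same approach as the paper's: both single out the distinguished edge $g^{*}=x_{1}y_{1}$ with $\{x_{1}\}=A\setminus B$, force $\alpha_{0}\ge 1$ via the uniqueness of $x_{1}$ among the graph edges, locate a partner $x_{c'}y_{1}$ with $x_{c'}\in B$, and then split on whether $y_{2}\mid\ell$ (swap) or $y_{2}\nmid\ell$ ($Y$-degree obstruction against the $f_{0}$-equation). The only cosmetic differences are that the paper assumes $\alpha_{ss}=0$ for all $s$ up front (so the $f_{0}$-equation contradiction is phrased as a genuine contradiction rather than forcing $\alpha'_{0}\ge 1$), and your preliminary $J$-reduction paragraph is redundant, since the $g^{*}$-analysis already covers every case.
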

 
\begin{proof}
 Let $r=|B|$ and,  without loss of generality,  assume that  $\{x_1\}=A\setminus B$.  
 For convenience of notation, we set    $I:=I(\mathcal{C})=(f_0, f_1, f_2,\ldots, f_{2r+1})$, where $f_0=\prod_{z\in X}z$ and $f_1=x_1y_1$. 
 Choose a monomial  $m\in(I^{k+1}:I)$, where $k\geq 1$.    Hence,  for all $i=0,\ldots, 2r+1$, we can write  
 \begin{equation}\label{4eq}
  mf_i=\ell_i f_0^{\alpha_{i,0}} f_1^{\alpha_{i,1}}\cdots f_{2r+1}^{\alpha_{i,2r+1}},
 \end{equation}
 with $ \sum_{j=0}^{2r+1} \alpha_{i,j}=k+1$ and $\ell_i$ is a monomial.   
            If $\alpha_{ss}>0$ for some $0\leq s \leq 2r+1$ or $x_1\mid \ell_1$ or $y_1 \mid \ell_1$, then  we conclude immediately that  $m\in I^k$. 
 We thus  assume that $\alpha_{ss}=0$ for all $s=0, \ldots, 2r+1$, $x_1\nmid \ell_1$, and $y_1 \nmid \ell_1$. Then $y_1\mid f_j$ for some $2\leq j \leq 2r+1$ 
 with  $\alpha_{1,j}>0$. In particular, we  note that $\deg_{y_1y_2}(mf_0)\geq k+1$. Since $\alpha_{1,1}=0$,  this implies that 
   $f_j=x_sy_1$ with $x_s\neq x_1$.  On account of  $x_1\nmid \ell_1$ and  $x_1\nmid f_i$ for each $2 \leq i \leq 2r+1$, we must have 
   $\alpha_{1,0}>0$. Therefore, we can  rewrite (\ref{4eq}) as follows
                       \begin{equation}\label{5eq}
  m(x_1y_1)=\ell_1 (f_0x_sy_1) f_0^{\alpha_{1,0}-1} f_2^{\alpha_{1,2}}\cdots f_j^{\alpha_{1,j}-1} \cdots f_{2r+1}^{\alpha_{1,2r+1}}. 
 \end{equation} 
 If $y_2\mid \ell_1$, then  $x_sy_2\mid \ell_1f_0x_sy_1$, and  so $m\in I^k$. Finally, if $y_2\nmid \ell_1$, then  we obtain $\deg_{y_1y_2}m \leq k-1$,
  and hence   $\deg_{y_1y_2}(mf_0)\leq k-1$, which is a contradiction. Accordingly, we have $m\in I^k$, and the proof is done. 
\end{proof}


To establish Lemma \ref{strongly persistent.Lem.4}, we first need to recall the following definition. 

\begin{definition} (\cite[Definition 12.6.1]{HH1})
\em{
Let $I\subset K[x_1, \ldots, x_n]$ be a monomial ideal generated in one degree. We say that $I$ is {\it polymatroidal} \index{polymatroidal ideal}    if the following ``exchange condition"  is satisfied: For monomials $u = x_1^{a_1}\cdots x_n^{a_n}$  and $v = x_1^{b_1}\cdots x_n^{b_n}$ belonging to $\mathcal{G}(I)$
and, for each $i$ with $a_i > b_i$, one has $j$ with $a_j < b_j$ such that $x_ju/x_i \in \mathcal{G}(I)$. 
}
\end{definition}


We are ready to state  Lemma \ref{strongly persistent.Lem.4} which will be applied   to prove Proposition \ref{strongly persistent.Pro.4}.

\begin{lemma}\label{strongly persistent.Lem.4} 
Let $\mathcal{C} $ be a clutter on $\{x_1,x_2,x_3,x_4,x_5\}$ such that $|e|=3$ for each $e\in E(\mathcal{C})$. 
If any $x_i,x_j\in V(\mathcal{C})$ with $x_i\neq x_j$ are contained in at least two different edges, then $E(\mathcal{C})$ is the set of bases of a
matroid, and hence $\mathcal{C}$ is strongly persistent. 
\end{lemma}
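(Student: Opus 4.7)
The plan is to verify the matroid basis exchange axiom directly from the hypothesis, and then to conclude strong persistence by invoking item~(2) in the list of known examples from the introduction, namely that polymatroidal ideals have the strong persistence property.

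Concretely, I would take arbitrary $B_1,B_2\in E(\mathcal{C})$ and $x\in B_1\setminus B_2$, and aim to find $y\in B_2\setminus B_1$ with $(B_1\setminus\{x\})\cup\{y\}\in E(\mathcal{C})$. Since $|B_1|=|B_2|=3$ and $|V(\mathcal{C})|=5$, we have $|B_1\cap B_2|\in\{1,2\}$. When $|B_1\cap B_2|=2$, $B_2=(B_1\setminus\{x\})\cup\{y\}$ for the unique $y\in B_2\setminus B_1$, so the exchange is immediate. When $|B_1\cap B_2|=1$, write $B_1=\{a,x,x'\}$ and $B_2=\{a,y_1,y_2\}$ with $\{x,x'\}\cap\{y_1,y_2\}=\emptyset$. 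The pair $\{a,x'\}=B_1\setminus\{x\}$ is, by hypothesis, contained in at least two edges of $\mathcal{C}$; one of them is $B_1$, so the other has a third vertex $z\in V(\mathcal{C})\setminus\{a,x,x'\}=\{y_1,y_2\}=B_2\setminus B_1$. Taking $y=z$ finishes the exchange.

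Hence $E(\mathcal{C})$ is the collection of bases of a matroid on $\{x_1,\ldots,x_5\}$ of rank $3$. Translating back to monomials, $I(\mathcal{C})$ is generated in a single degree and its generators satisfy the squarefree exchange property: if $u,v\in\mathcal{G}(I(\mathcal{C}))$ and $x_i$ divides $u$ but not $v$, then there exists $x_j$ dividing $v$ but not $u$ such that $x_j u/x_i\in\mathcal{G}(I(\mathcal{C}))$. This is exactly the polymatroidal exchange condition, so $I(\mathcal{C})$ is a (squarefree) polymatroidal ideal. By item~(2) of the list in the introduction (Herzog--Qureshi), every polymatroidal ideal has the strong persistence property, so $\mathcal{C}$ is strongly persistent.

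There is no serious obstacle here: the key observation is simply that on $5$ vertices two distinct triples must share at least one vertex, so the ``missing'' third vertex of the second edge through $B_1\setminus\{x\}$ is forced to lie in $B_2\setminus B_1$. The only thing to be careful about is the identification of squarefree polymatroidal ideals with edge ideals of matroids, which is standard and follows directly by comparing the two exchange axioms.
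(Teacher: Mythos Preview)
Your argument is correct and follows essentially the same route as the paper: verify the basis/polymatroidal exchange axiom by splitting into the cases $|B_1\cap B_2|=2$ and $|B_1\cap B_2|=1$, and then invoke Herzog--Qureshi (item~(2) in the introduction, i.e.\ \cite[Proposition~2.4]{HQ}) to conclude strong persistence. In fact your handling of the case $|B_1\cap B_2|=1$ is more explicit than the paper's, since you spell out why the third vertex of the second edge through $B_1\setminus\{x\}$ must land in $B_2\setminus B_1$.
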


\begin{proof}
Based on \cite[Proposition 2.4]{HQ}, it is well-known that any polymatroidal ideal satisfies the strong persistence property. Hence it is enough for us to show that 
$I:=I(\mathcal{C})$ is polymatroidal. Let $u, v \in \mathcal{G}(I)$ with $u\neq v$. It is not hard to see that 
 $1\leq |\mathrm{supp}(u) \cap \mathrm{supp}(v)| \leq 2$. Hence, one may consider the  following cases:

\bigskip
\textbf{Case 1.}    $|\mathrm{supp}(u) \cap \mathrm{supp}(v)|=2$. Then $u=x_ix_jx_r$ and $v=x_ix_jx_s$ with $r\neq s$. 
Since $x_su/x_r=v$, we get $x_su/x_r\in \mathcal{G}(I)$.

\bigskip
\textbf{Case 2.}  $|\mathrm{supp}(u) \cap \mathrm{supp}(v)|=1$.  It follows from our hypothesis that there exists some 
$w\in \mathcal{G}(I)$ such that  $|\mathrm{supp}(u) \cap \mathrm{supp}(w)|=2$. Now, we can repeat the argument in Case 1. 

 Consequently, we can deduce that  $E(\mathcal{C})$ is the set of bases of a matroid, and hence $\mathcal{C}$ is strongly persistent. 
 \end{proof}


It should be observed that Proposition \ref{strongly persistent.Pro.4} and  Corollary \ref{strong.persistence.2+4} are crucial for us to complete the proof of Theorem \ref{main.Section3}. 

\begin{proposition}\label{strongly persistent.Pro.4} 
  Let $\mathcal{C} $ be a clutter on $\{x_1,x_2,x_3,x_4,x_5\}$ such that $|e|=3$ for each $e\in E(\mathcal{C})$.  Then $\mathcal{C}$ is strongly persistent. 
\end{proposition}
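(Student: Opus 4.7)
The plan is to exhaust all possible structures of $\mathcal{C}$ through a case analysis, reducing each case to one of the preparatory lemmas of this section. Write $V := V(\mathcal{C}) = \{x_1, x_2, x_3, x_4, x_5\}$. The driving numerical observation is that since every edge has size $3$ and $|V| = 5$, the complement in $V$ of any pair of vertices has exactly three elements, so there is at most one potential edge disjoint from any given pair.

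The first dichotomy I would set up is whether every pair of distinct vertices of $V$ lies in at least two different edges of $\mathcal{C}$. If so, Lemma \ref{strongly persistent.Lem.4} applies directly and $\mathcal{C}$ is strongly persistent (its edges form the bases of a matroid, so $I(\mathcal{C})$ is polymatroidal). If not, there exist two distinct vertices $x, y \in V$ contained together in at most one edge; relabel so that $V \setminus \{x, y\} = \{x_1, x_2, x_3\}$.

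Next, I would split according to the number of edges containing $\{x, y\}$. When $\{x, y\}$ lies in no edge, the hypotheses of Lemma \ref{strongly persistent.Lem.2} are met if $\{x_1, x_2, x_3\} \notin E(\mathcal{C})$, and otherwise the hypotheses of Lemma \ref{strongly persistent.Lem.3} are met. When $\{x, y\}$ lies in exactly one edge $\{x, y, z\}$, I would first consider the subcase in which every edge of $\mathcal{C}$ meets $\{x, y\}$; here conditions (1)--(3) of Lemma \ref{Lem.Unique} hold and part (i) gives strong persistence. In the remaining subcase, some edge $e \in E(\mathcal{C})$ is disjoint from $\{x, y\}$, and by the cardinality observation $e = \{x_1, x_2, x_3\}$. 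I would then set $\mathcal{C}_0 := \mathcal{C}$ with the edge $\{x_1, x_2, x_3\}$ removed, check that $\mathcal{C}_0$ still satisfies the three conditions of Lemma \ref{Lem.Unique}, and apply part (ii) of that lemma to $\mathcal{C} = \mathcal{C}_0 \cup \{\{x_1, x_2, x_3\}\}$.

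The main obstacle is not executing the case analysis, which is essentially forced, but rather being confident that the preparatory lemmas truly cover every possibility. What makes the argument close is precisely the small parameter $|V| = 5$: once a ``bad pair'' $\{x, y\}$ is identified, its three-element complement $\{x_1, x_2, x_3\}$ together with $\{x, y\}$ exhausts $V$, so any edge avoiding $\{x, y\}$ must equal $\{x_1, x_2, x_3\}$. This leaves no room for stray uncovered configurations beyond those treated in Lemmas \ref{strongly persistent.Lem.2}, \ref{strongly persistent.Lem.3}, \ref{Lem.Unique}, and \ref{strongly persistent.Lem.4}, and the proof is complete.
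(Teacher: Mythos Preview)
Your proposal is correct and follows essentially the same route as the paper's proof: both argue first via Lemma~\ref{strongly persistent.Lem.4} when every pair lies in at least two edges, then pick a ``bad pair'' $\{x,y\}$ and split into the cases handled by Lemmas~\ref{strongly persistent.Lem.2}, \ref{strongly persistent.Lem.3}, and \ref{Lem.Unique}(i)--(ii). Your version is in fact slightly more explicit in the final subcase, spelling out why removing $\{x_1,x_2,x_3\}$ yields a clutter to which part~(ii) of Lemma~\ref{Lem.Unique} applies.
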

\begin{proof}
    If $\mathcal{C} $ satisfies the conditions of Lemma \ref{strongly persistent.Lem.4}, then    $\mathcal{C}$ is strongly persistent. 
    In contrary case, there exist  $x,y\in V(\mathcal{C})$  with $x\neq y$ such that are contained in at most one  edge. If both $x $ and $y$ are not contained
     in any edge (i.e., $\{x,y\}\nsubseteq e$ for all $e\in E(\mathcal{C})$), then $\mathcal{C}$ satisfies the condition of Lemma \ref{strongly persistent.Lem.2} or Lemma \ref{strongly persistent.Lem.3}, and so 
     $\mathcal{C}$ is strongly persistent.       Finally, we assume that there exists a unique edge $e_1$ such that contains $x$ and $y$. 
     Then $\mathcal{C}$ satisfies the conditions  of  Lemma \ref{Lem.Unique} part (i) or (ii). Hence, $\mathcal{C}$ is strongly persistent, and the proof is complete. 
     \end{proof}


We are  in a position to express and verify  the   main result of this section in the subsequent theorem.

\begin{theorem} \label{main.Section3}
Every square-free monomial ideal in  $K[x_1,x_2,x_3,x_4,x_5]$ has the strong persistence property. 
\end{theorem}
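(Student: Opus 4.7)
The strategy is to classify an arbitrary square-free monomial ideal $I \subset K[x_1,\ldots,x_5]$ by the structure of the associated clutter $\mathcal{C}$ with $I = I(\mathcal{C})$, and reduce each case to one of the structural propositions established earlier in this section. First, two preliminary reductions: if $\mathrm{supp}(I) \subsetneq \{x_1,\ldots,x_5\}$, then $I$ lies effectively in a polynomial ring in at most four variables and Theorem~\ref{RT-Th-9} applies directly; if some $x_i \in \mathcal{G}(I)$, writing $I = (x_i) + J$ with $\mathcal{G}(J) \subset K[V \setminus \{x_i\}]$ and observing that the principal ideal $(x_i)$ trivially has the strong persistence property, Lemma~\ref{NKRT-Lem-2.2} transfers the property to $I$. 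Henceforth I assume every edge of $\mathcal{C}$ has cardinality at least two and $\mathrm{supp}(I) = \{x_1,\ldots,x_5\}$.

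Next I split according to $s := \max\{|e| : e \in E(\mathcal{C})\}$. The extreme cases $s = 5$ (principal ideal) and $s = 2$ (edge ideal of a graph, covered by \cite[Theorem 2.15]{MMV}) are immediate. For $s = 4$, fix a size-four edge $e_0 = V \setminus \{x\}$; the clutter condition forces every remaining edge to contain $x$ (otherwise it would be a proper subset of $e_0$), so $\mathcal{C}$ has precisely the structure of Proposition~\ref{strongly persistent.Pro.1}: it is the cone over the four-vertex clutter $\mathcal{C}_0 = \{e \setminus \{x\} : e \in E(\mathcal{C}) \setminus \{e_0\}\}$, augmented by the base edge $V(\mathcal{C}_0) = e_0$. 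Theorem~\ref{RT-Th-9} gives that $\mathcal{C}_0$ is strongly persistent, and Proposition~\ref{strongly persistent.Pro.1} transfers the property to $\mathcal{C}$. The case $s = 3$ with all edges of size three is exactly Proposition~\ref{strongly persistent.Pro.4}.

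The remaining case, $s = 3$ with both size-two and size-three edges present, is the main obstacle, and I would proceed as follows. If $\mathcal{C}$ is a cone (every edge shares a common vertex $x$), then Proposition~\ref{RT. Proposition 5} combined with Theorem~\ref{RT-Th-9} applied to the four-vertex base settles the case. Otherwise, every variable is missed by some edge, and I would analyse the subgraph $\mathcal{G}$ formed by the size-two edges: if $\mathcal{G}$ contains two disjoint edges, Proposition~\ref{211} and Corollary~\ref{strong.persistence.2+4} apply; if $\mathcal{G}$ is concentrated around a small vertex set so that the whole clutter has the ``big edge plus bipartite'' shape, Propositions~\ref{212} and~\ref{213} apply; the more symmetric configurations are reducible via the cone-augmentation Propositions~\ref{strongly persistent.Pro.2} and~\ref{strongly persistent.Pro.3} to clutters on fewer vertices. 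The crux is a combinatorial enumeration ensuring that every admissible configuration of size-two and size-three edges on five vertices, subject to the clutter condition and to full support, falls into one of these structural templates; this bookkeeping is the technical heart of the proof, though no new conceptual ingredient beyond the propositions already available should be required.
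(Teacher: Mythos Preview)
Your overall architecture matches the paper's: reduce to minimum edge size at least two, then branch on the maximum edge size; the cases $s\in\{5,4,2\}$ and the pure $s=3$ case go through exactly as you say (the paper's treatment of a size-four edge is identical to your argument via Proposition~\ref{strongly persistent.Pro.1}, and your direct appeal to \cite{MMV} for $s=2$ is actually cleaner than the paper's route, which absorbs the graph case into later subcases).

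The gap is exactly where you locate it, the mixed $s=3$ case, and your sketch would need correction before it becomes a proof. Two specific issues: Corollary~\ref{strong.persistence.2+4} requires that there be \emph{exactly} two size-two edges (and that they be disjoint), not merely that two disjoint ones exist somewhere in $\mathcal{G}$; and Proposition~\ref{strongly persistent.Pro.3} plays no role in the mixed case---it is a building block inside the proof of Proposition~\ref{strongly persistent.Pro.4}, which you have already consumed for the pure $3$-uniform case. The paper organizes differently from your suggestion: it counts the size-two edges precisely. With exactly one such edge $\{x,y\}$, the clutter condition forces every size-three edge to hit $\{x,y\}$ except possibly the complement $X=V\setminus\{x,y\}$, and Proposition~\ref{strongly persistent.Pro.2}(i) or (ii) applies. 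With exactly two, either they intersect (same proposition) or they are disjoint (Corollary~\ref{strong.persistence.2+4}). With at least three, the paper classifies any triple of size-two edges into three isomorphism types (path, triangle, star), first disposes of the situation where some $e_i^c$ is an edge via Proposition~\ref{strongly persistent.Pro.2}(ii), and then dispatches each residual configuration individually using Propositions~\ref{211}, \ref{212}, \ref{213} and Lemma~\ref{RT-Lem-2}. This last block is genuinely fiddly; your instinct that it is ``just bookkeeping'' is right in spirit, but the hypotheses of each auxiliary result are narrow enough that matching them to the configuration at hand takes real care.
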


\begin{proof} 
Let $I\subset R=K[x_1, x_2, x_3, x_4, x_5]$ be a square-free monomial ideal, and  $\mathcal{C}$ be the clutter associated to $I$. 
First note that by the proof of Theorem 5.10 in  \cite{RNA} and Proposition \ref{RT-Pro-8}, we are able to assume that  $|E(\mathcal{C})|>3$.  If $|e|=1$
 for some $e\in E(\mathcal{C})$, then the claim can be deduced from  Lemma \ref{NKRT-Lem-2.2}. 
 We thus  assume that $|e| \geq 2$ for each $e\in  E(\mathcal{C})$. Here, we may consider  the  following cases:

\bigskip
{\bf Case 1.}  $\mathcal{C}$ has some edge of cardinality $4$. Then we can derive from  Theorem  \ref{RT-Th-9}  and 
Proposition \ref{strongly persistent.Pro.1}  that $\mathcal{C}$ is strongly persistent. 

\bigskip 
{\bf Case  2.}  $|e|\leq 3$ for each $e\in E(\mathcal{C})$ and $\mathcal{C}$  has only one edge of cardinality $2$. Then   Proposition \ref{strongly persistent.Pro.2}  implies that  $\mathcal{C}$ is strongly persistent. 

\bigskip 
{\bf Case  3.} $|e|\leq 3$ for each $e\in E(\mathcal{C})$ and $\mathcal{C}$  has exactly two  edges of cardinality $2$, say  $ |e_1|=|e_2|=2$.
  If $e_1\cap e_2\neq\emptyset$, then $\mathcal{C}$ satisfies the condition of Proposition \ref{strongly persistent.Pro.2}. 
  If $e_1\cap e_2=\emptyset$, then $\mathcal{C}$ satisfies the condition of Corollary   \ref{strong.persistence.2+4}.

\bigskip 
{\bf Case 4.}   $|e|\leq 3$ for each $e\in E(\mathcal{C})$ and  $\mathcal{C}$ has at least $3$ edges of cardinality $2$, say $e_1$, $e_2$, and $e_3$. 
 Then it is easy to check that there exist three possibles  for these edges, that is, 
 \begin{itemize} 
 \item $\{e_1, e_2, e_3\}=\{\{x_1, x_2\}, \{x_2,x_3\}, \{x_3, x_4\}\}\; \text{(type I)}.$ 
\item  $\{e_1, e_2, e_3\}=\{\{x_1, x_2\}, \{x_2,x_3\}, \{x_1,x_3\}\}\; \text{(type II)}.$   
 \item $\{e_1, e_2, e_3\}=\{\{x_1, x_2\}, \{x_1,x_3\}, \{x_1,x_4\}\}\; \text{(type III)}.$ 
\end{itemize}
If $ e_i^c=\{x_1,x_2,x_3,x_4,x_5\}\setminus e_i\in E(\mathcal{C})$ for some $1\leq i \leq 3$, 
then $ \mathcal{C}$ satisfies the conditions of  Proposition \ref{strongly persistent.Pro.2}(ii) by  taking $\{x,y\}=e_i$ and $X=e_i^c$. Thus, we can 
assume that $e_i^c\notin E(\mathcal{C})$ for each $i=1,2,3$. In particular, if  $e'\in E(\mathcal{C})$ with $|e'|=3$, then $|e\cap e_i|=1$ for each $i=1,2,3$. 
If  $e_1$, $e_2,$ and $e_3$ are of the type II, then  $\mathcal{C}$ must be  a simple graph since if $|e|=3$, then  $e\cap e_i=e_i$ for some $1\leq i \leq 3$, and so  $e_i\subseteq e$, a contradiction. Hence,  we can additionally  assume that $\mathcal{C}$ does not contain the type II but has the type  III.  We note $x_1\in e$ implies $x_s\notin e$ for $s=2,3,4$. Thus obtain $e=\{x_1,x_5\}$. Similarly,  $ x_1\notin e$ implies $x_5 \notin e$, since $x_5\in e$ yields that  $\{x_1,x_t\},\{x_5,x_r,x_s\} \in E(\mathcal{C})$ for $\{r,s,t\}=\{2,3,4\}$. So $x_1,x_5\notin e$ gives $e=\{x_2,x_3,x_4\}$. We therefore  conclude that the only edge $e$ of cardinality 3 is $e=\{x_2,x_3,x_4\}$, and that  the edges incident at $x_5$ are edges of cardinality $2$. We note that $\{x_1,x_5\}\notin E(\mathcal{C})$ since $\{x_1,x_5\}^c=\{x_3,x_4,x_5\}$. Hence, we reduce to  the following  cases:
   \begin{enumerate}
  \item[(A)] If $\deg(x_5)=3$, we consider  $\{y_1,y_2\}=\{x_1,x_5\}$ in Proposition \ref{212}.
\item[(B)] If $\deg(x_5)=2$, we consider $\{y_1,y_2\}=\{x_1,x_5\}$ in Proposition \ref{213}.
\item[(C)] If $\deg(x_5)=1$,  we consider $f=\{x_i,x_5\}$, where $x_i \in \{x_2, x_3, x_4\}$,   in     Lemma \ref{RT-Lem-2}. 
   
   \end{enumerate}
Case (C) is obtained since  $f\cap e  =\emptyset$ or $f\cap e=\{x_i\}$ or $f\cap e=f$ for each $e\in  E(\mathcal{C})$, and so  $\emptyset \subseteq \{x_5\}\subseteq f$ is a chain.  Accordingly,  we can deduce that  $\mathcal{C}$  is strongly persistent.

Finally, we assume that $\mathcal{C}$ has the types of  neither II nor III but has the type I.  We set $e_1:=\{x_1,x_2\}$, $e_2:=\{x_2,x_3\},$ and   $e_3:=\{x_3,x_4\}$. Then the only  possible for the edges are as follows:
\begin{enumerate}
\item[(i)] If $\{x_1,x_4\}\in E(\mathcal{C})$, then  we can use Proposition \ref{211} when  $f_1=x_2x_3$ and $f_2=x_1x_4$; 
\item[(ii)] If $\{x_1,x_5\}\in E(\mathcal{C})$, then  we can employ   Proposition \ref{211} when $f_1=x_2x_3$ and $f_2=x_1x_5$;
\item[(iii)] If $\{x_1,x_4\},\{x_1,x_5\}\notin E$, then  we can utilize  Proposition  \ref{strongly persistent.Pro.2}(i).
\end{enumerate}

\bigskip 
{\bf Case 5.}  $|e|=3$ for each $e\in E(\mathcal{C})$. One can conclude from Proposition \ref{strongly persistent.Pro.4}
  that  $\mathcal{C}$ is strongly persistent. 
 \end{proof}
 

We terminate this section with the following remark. 

\begin{remark}
\em{
It should be noted that we cannot drop the word ``square-free" in Theorem \ref{main.Section3}, and so it cannot be  extended to any monomial ideal. 
A large class of counterexamples has been presented in \cite[Proposition 3.1]{NKRT} (for $K[x,y]$)
 and  \cite[Example 2.8]{Nasernejad1} (for $K[x,y,z]$).
} 
\end{remark}


\section{A  minimal counterexample to the Conforti-Cornu\'e{j}ols conjecture}

 The main aim of this section is to provide a criterion for a minimal counterexample to the Conforti-Cornu\'e{j}ols conjecture. To accomplish this,
  we require to  give plenty of results in this direction. We start to show Proposition   \ref{Pro.3} and Lemma \ref{Min-existence}, which will be employed in 
   Lemma \ref{Lem.2}.

\begin{proposition} \label{Pro.3}
Let $I\subset R=K[x_1, \ldots, x_n]$ be a square-free  monomial ideal such that there exist a positive integer $\ell$ and some square-free monomial $v\in I^\ell$ with  $(\mathfrak{p}^t:v)=\mathfrak{p}^{t-\ell}$ and $I^{t-\ell}=I^{(t-\ell)}$ for all     $\mathfrak{p}\in \mathrm{Min}(I)$ and  $t>\ell$. Then  
     $(I^t:v)=I^{t-\ell}$    for all $t>\ell$. 
\end{proposition}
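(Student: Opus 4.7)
The plan is to prove the equality by showing both inclusions, using the symbolic-power description of $I$ via its minimal primes together with the two hypotheses on $v$ and $\mathfrak p$.

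First I would establish the easy inclusion $I^{t-\ell}\subseteq (I^t:v)$. Since $v\in I^\ell$ by hypothesis, we have $v\cdot I^{t-\ell}\subseteq I^\ell\cdot I^{t-\ell}=I^t$, and so any element of $I^{t-\ell}$ times $v$ lies in $I^t$. This direction does not use the colon hypothesis on the primes at all.

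The heart of the argument is the reverse inclusion $(I^t:v)\subseteq I^{t-\ell}$. The key observation is that because $I$ is square-free, the minimal primary decomposition is $I=\bigcap_{\mathfrak p\in\mathrm{Min}(I)}\mathfrak p$ with the minimal primes having pairwise non-comparable radicals, so Fact \ref{Ex.6.1.25} gives $I^{(t)}=\bigcap_{\mathfrak p\in\mathrm{Min}(I)}\mathfrak p^{t}$ for every $t\geq 1$. Since $I\subseteq\mathfrak p$ implies $I^t\subseteq\mathfrak p^t$ for each minimal prime, we get $I^t\subseteq I^{(t)}$. Therefore
\[
(I^t:v)\;\subseteq\;(I^{(t)}:v)\;=\;\Bigl(\bigcap_{\mathfrak p\in\mathrm{Min}(I)}\mathfrak p^{t}\;:\;v\Bigr)\;=\;\bigcap_{\mathfrak p\in\mathrm{Min}(I)}(\mathfrak p^{t}:v),
\]
using that colon distributes over intersection. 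Now the prime-level hypothesis $(\mathfrak p^t:v)=\mathfrak p^{t-\ell}$ allows us to rewrite this as $\bigcap_{\mathfrak p\in\mathrm{Min}(I)}\mathfrak p^{t-\ell}=I^{(t-\ell)}$, and finally the hypothesis $I^{(t-\ell)}=I^{t-\ell}$ closes the chain to give $(I^t:v)\subseteq I^{t-\ell}$.

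Combining the two inclusions yields $(I^t:v)=I^{t-\ell}$ for every $t>\ell$, as required. There is no genuine obstacle here: the proof is essentially bookkeeping that exploits (i) square-freeness to identify the symbolic power with the intersection of prime powers, (ii) the containment $I^t\subseteq I^{(t)}$, and (iii) the two hypotheses, which are tailored exactly to collapse the symbolic computation to an ordinary-power statement. The only small care point is to invoke Fact \ref{Ex.6.1.25} to justify the explicit intersection formula for $I^{(t)}$ rather than the more abstract primary-component definition.
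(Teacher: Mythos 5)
Your proof is correct and follows essentially the same route as the paper: the easy inclusion from $v\in I^\ell$, then for the reverse inclusion the containment $I^t\subseteq\mathfrak p^t$ for each $\mathfrak p\in\mathrm{Min}(I)$, the hypothesis $(\mathfrak p^t:v)=\mathfrak p^{t-\ell}$, the identification $\bigcap_{\mathfrak p\in\mathrm{Min}(I)}\mathfrak p^{t-\ell}=I^{(t-\ell)}$ via Fact \ref{Ex.6.1.25}, and finally $I^{(t-\ell)}=I^{t-\ell}$. The only cosmetic difference is that you phrase the argument as an ideal-level chain through $I^{(t)}$ and distribution of colon over intersections, whereas the paper argues elementwise on a monomial $u\in(I^t:v)$; the content is identical.
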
 

\begin{proof}
Let $t>\ell$. Since $v\in I^\ell$, this implies that $vI^{t-\ell}\subseteq I^t$, and so $I^{t-\ell} \subseteq (I^t:v)$. To complete the proof, we show that $(I^t:v) \subseteq I^{t-\ell}$. Take an arbitrary monomial $u\in (I^t:v)$. Let $\mathfrak{p}\in \mathrm{Min}(I)$. Hence, $I^t\subseteq \mathfrak{p}^t$, and hence   $u\in (\mathfrak{p}^t:v)$. It follows from our assumption that $u\in \mathfrak{p}^{t-\ell}$. Due to $\mathfrak{p}$ is arbitrary, we conclude that 
$u\in \bigcap_{\mathfrak{p}\in \mathrm{Min}(I)}\mathfrak{p}^{t-\ell}$. According to  Definition \ref{Def.4.3.22} and  Fact  \ref{Ex.6.1.25}, we obtain 
$I^{(t-\ell)}= \bigcap_{\mathfrak{p}\in \mathrm{Min}(I)}\mathfrak{p}^{t-\ell}$, and by virtue of  $I^{t-\ell}=I^{(t-\ell)}$, we get 
$u\in I^{t-\ell}$. Therefore,  $(I^t:v) \subseteq I^{t-\ell}$, as required. 
\end{proof}


 \begin{lemma} \label{Min-existence}
Let $I\subset R=K[x_1, \ldots, x_n]$ be a  square-free monomial ideal  and $\mathfrak{p}\in \mathrm{Ass}(R/I^s)\setminus \mathrm{Min}(I)$
  for some  $s> 1$ and  $x_i \in \mathfrak{p}$ with $1\leq i \leq n$.  Then the following statements hold:
  \begin{itemize}
 \item[(i)]    there exists some $\mathfrak{q}\in  \mathrm{Min}(I)$ such that $x_i\in \mathfrak{q}$   and $\mathfrak{q} \subsetneq \mathfrak{p}$.     
  \item[(ii)]  $\mathfrak{p}\setminus x_i \notin \mathrm{Min}(I\setminus x_i)$. 
  \end{itemize}
  \end{lemma}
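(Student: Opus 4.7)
The plan is to deduce (i) by reducing to the case where $\mathfrak{p}$ is the graded maximal ideal via monomial localization and applying a zero-divisor argument, and then to deduce (ii) from (i) by comparing the ``deleted'' primes.

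For (i), I will pass to the monomial localization $I_{[\mathfrak{p}]}\subset R_{[\mathfrak{p}]}:=K[x_j : x_j\in \mathfrak{p}]$, obtained by setting every variable outside $\mathfrak{p}$ equal to $1$. Standard compatibility of monomial localization with associated primes and minimal primes yields a bijection $\{\mathfrak{q}\in\mathrm{Min}(I) : \mathfrak{q}\subseteq \mathfrak{p}\}\leftrightarrow \mathrm{Min}(I_{[\mathfrak{p}]})$ (preserving the variable sets) and translates the hypothesis $\mathfrak{p}\in\mathrm{Ass}(R/I^s)\setminus\mathrm{Min}(I)$ into the analogous statement for the graded maximal ideal of $R_{[\mathfrak{p}]}$ with respect to $I_{[\mathfrak{p}]}$. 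Thus it suffices to prove the statement under the additional assumption $\mathfrak{p}=\mathfrak{m}:=(x_1,\ldots,x_n)$. I argue the contrapositive: suppose $x_i$ belongs to no minimal prime of $I$. Since $I$ is square-free, $\mathrm{Ass}(R/I)=\mathrm{Min}(I)$, so $x_i$ is a non-zero-divisor modulo $I$. This forces $x_i$ not to divide any $u\in\mathcal{G}(I)$: if $x_i\mid u$ for some minimal generator $u$, then $u/x_i$ cannot lie in $I$ (otherwise a distinct minimal generator would divide $u$), contradicting non-zero-divisorhood. Hence $I$ is extended from $R'=K[x_j : j\ne i]$, which forces every prime in $\mathrm{Ass}(R/I^s)$ to omit $x_i$---a contradiction to $\mathfrak{m}\in\mathrm{Ass}(R/I^s)$. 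The resulting $\mathfrak{q}\in\mathrm{Min}(I)$ containing $x_i$ is strictly contained in $\mathfrak{p}$ because $\mathfrak{p}\notin\mathrm{Min}(I)$.

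For (ii), I take the $\mathfrak{q}$ produced in (i) and show that $\mathfrak{q}\setminus x_i\subsetneq \mathfrak{p}\setminus x_i$ while $I\setminus x_i\subseteq \mathfrak{q}\setminus x_i$. Each minimal generator of $I\setminus x_i$ arises from some $u\in\mathcal{G}(I)$ with $x_i\nmid u$; since $u\in\mathfrak{q}$, some variable of $u$ lies in $\mathfrak{q}$, and this variable is necessarily not $x_i$, giving $u\in\mathfrak{q}\setminus x_i$, hence $I\setminus x_i\subseteq \mathfrak{q}\setminus x_i$. Strictness $\mathfrak{q}\setminus x_i\subsetneq \mathfrak{p}\setminus x_i$ follows because $\mathfrak{q}\subsetneq \mathfrak{p}$ and $x_i\in\mathfrak{q}$ together imply the existence of a variable $x_j\in \mathfrak{p}\setminus\mathfrak{q}$ with $x_j\ne x_i$. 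Therefore $\mathfrak{p}\setminus x_i$ properly contains the prime $\mathfrak{q}\setminus x_i$, which itself contains $I\setminus x_i$, and so $\mathfrak{p}\setminus x_i$ is not a minimal prime of $I\setminus x_i$.

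The main technical point is the compatibility of monomial localization with $\mathrm{Min}(\cdot)$ and with $\mathrm{Ass}(R/I^s)$; once this reduction is performed, the heart of the proof is the short zero-divisor argument combined with the structural observation that an ideal not involving a particular variable has no associated prime containing it. Part (ii) is then a direct combinatorial consequence of the $\mathfrak{q}$ furnished by (i).
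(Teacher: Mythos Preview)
Your proof is correct and follows essentially the same strategy as the paper's: reduce via localization so that $\mathfrak{p}$ becomes the graded maximal ideal, and then argue that $x_i$ must lie in some minimal prime of $I$ because it divides a minimal generator. Your presentation is slightly more streamlined---you perform the monomial localization upfront and phrase the core step as a non-zero-divisor argument, whereas the paper first extracts a generator $v_c\in\mathcal{G}(I)$ with $x_i\mid v_c$ from the colon description $\mathfrak{p}=(I^s:h)$ and only afterwards localizes to place the resulting minimal prime inside $\mathfrak{p}$; part~(ii) is handled the same way in both, with your version making the containment $I\setminus x_i\subseteq\mathfrak{q}\setminus x_i$ explicit where the paper leaves it implicit.
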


\begin{proof}

(i)  We first note that if $|\mathrm{Min}(I)|=1$, say $\mathrm{Min}(I)=\{\mathfrak{q}\}$, then $I=\mathfrak{q}$, and so 
$\mathrm{Ass}(R/I^s)=\{\mathfrak{q}\}$, which contradicts our assumption $\mathfrak{p}\in \mathrm{Ass}(R/I^s)\setminus \mathrm{Min}(I)$. 
Hence, we must have $|\mathrm{Min}(I)|\geq 2$.   Let $\mathrm{Min}(I)=\{\mathfrak{q}_1, \ldots, \mathfrak{q}_r\}$, where $r\geq 2$,  and
  $\mathcal{G}(I^s)=\{u_1, \ldots, u_m\}$. Since $\mathfrak{p}\in \mathrm{Ass}(R/I^s)$, 
there exists some monomial   $h\in R$ such that $\mathfrak{p}=(I^s:h)$, and so $\mathfrak{p}=\sum_{i=1}^m(u_i:h)$. Due to $x_i\in \mathfrak{p}$, 
we get $x_i\in (u_\lambda:h)$ for some $1\leq \lambda \leq m$. Hence, $x_ih=u_\lambda w$ for some monomial $w$ in $R$.   Since $h\notin I^s$, we obtain $x_i \nmid w$, and so $x_i \mid u_\lambda$. Because $u_\lambda \in \mathcal{G}(I^s)$, there exist $v_1, \ldots, v_s\in \mathcal{G}(I)$ such that $u_\lambda=v_1 \cdots v_s$,  and hence $x_i \mid v_1 \cdots v_s$. This implies that $x_i \mid v_c$ for some $1\leq c \leq s$. Let $v_c=x_ig$ for some monomial $g$ in $R$. It 
 follows from $v_c\in I$ that $v_c \in \mathfrak{q}$ for all $\mathfrak{q}\in \mathrm{Min}(I)$. On the contrary, assume that $x_i\notin \mathfrak{q}$ for 
 all $\mathfrak{q}\in \mathrm{Min}(I)$. We thus get $g\in \mathfrak{q}$ for all $\mathfrak{q}\in \mathrm{Min}(I)$, and so $g\in I$. This contradicts 
 the minimality of $v_c$. Consequently, there exists some $\mathfrak{q}\in \mathrm{Min}(I)$ such that $x_i\in \mathfrak{q}$.  If $\mathfrak{p}=\mathfrak{m}$, then  the proof is over. Thus, we let $\mathfrak{p}\subsetneq \mathfrak{m}$. Without loss of generality,  assume that $\mathfrak{q}_j\subset \mathfrak{p}$ for all $j=1, \ldots, z$, where $z\geq 1$,   and $\mathfrak{q}_j\not\subset\mathfrak{p}$ for all $j=z+1, \ldots, r$.  
  Suppose, on the contrary, that $x_i\notin \mathfrak{q}_j$ for all $j=1, \ldots, z$.  
 Put $t:=\prod_{x_{\alpha}\in \mathfrak{m}\setminus \mathfrak{p}}x_{\alpha}$. Since  $\mathfrak{p}\subsetneq \mathfrak{m}$, this implies that   $t\neq 1$. 
  We now localize at $t$. It follows immediately  from   \cite[Lemma 9.38]{sharp} that 
   $$\mathrm{Ass}_{R_t}(I^sR_t)=\{\mathfrak{q}R_t : \mathfrak{q} \in \mathrm{Ass}(I^s) \; \text{and} \; \mathfrak{q} \cap T=\emptyset\},$$ 
   where $T=\{t^n : n\in \mathbb{N}\cup \{0\}\}$ is the  multiplicatively closed subset of $R$. 
   In particular, we have $\mathrm{Ass}_{R_t}(IR_t)=\{\mathfrak{q}_1R_t, \ldots, \mathfrak{q}_zR_t\}$ and $\mathfrak{p}R_t \in \mathrm{Ass}_{R_t}(I^sR_t)$. 
 Because  $x_i \in \mathfrak{p}$,  this yields that $x_i\in \mathfrak{p}R_t$, and so $x_i \in \mathrm{supp}(I^sR_t)$. Due to 
 $\mathrm{supp}(IR_t)=\mathrm{supp}(I^sR_t)$, we must have $x_i \in \mathrm{supp}(IR_t)$. On account of  $x_i \notin \bigcup_{j=1}^z \mathfrak{q}_jR_t$, 
 we can deduce that $x_i \notin \mathrm{supp}(IR_t)$. This leads to a contradiction. Accordingly, we must have $x_i \in \mathfrak{q}_j$ for some 
 $1\leq j \leq z$, and the proof is complete.   

(ii) It follows from part  (i) that there exists some $\mathfrak{q}\in  \mathrm{Min}(I)$ such that $x_i\in \mathfrak{q}$ 
  and $\mathfrak{q} \subsetneq \mathfrak{p}$.      In particular, we have $\mathfrak{q}\setminus x_i \subsetneq \mathfrak{p}\setminus x_i$. 
  If   $\mathfrak{q}\setminus x_i \in \mathrm{Min}(I\setminus x_i)$, then $\mathfrak{p}\setminus x_i \notin \mathrm{Min}(I\setminus x_i)$, and the proof is 
  done. Hence, let $\mathfrak{q}\setminus x_i \notin \mathrm{Min}(I\setminus x_i)$. This means that there exists some $\mathfrak{q}'\in \mathrm{Min}(I\setminus x_i)$ such that $\mathfrak{q}' \subsetneq \mathfrak{q}\setminus x_i$, and hence $\mathfrak{q}' \subsetneq  \mathfrak{p} \setminus x_i$. This gives rise to 
     $\mathfrak{p}\setminus x_i \notin  \mathrm{Min}(I\setminus x_i)$, as required. 
\end{proof}


\begin{corollary} \label{Cor.2}
 Let $I\subset R=K[x_1, \ldots, x_n]$ be a  square-free monomial ideal,  $\mathfrak{p}$ a prime monomial ideal in $R$ with  $x_i\in \mathfrak{p}$, where $1\leq i \leq n$,   such that   $\mathfrak{p}\setminus x_i \in \mathrm{Min}(I\setminus x_i)$.  Then $\mathfrak{p}\notin \mathrm{Ass}(R/I^s)\setminus \mathrm{Min}(I)$ 
 for all $s>1$. 
  \end{corollary}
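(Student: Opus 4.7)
The statement of Corollary \ref{Cor.2} is exactly the contrapositive of part (ii) of Lemma \ref{Min-existence}, so the plan is essentially to make this logical observation explicit rather than to do any new work. Concretely, I would argue by contradiction: suppose toward a contradiction that $\mathfrak{p} \in \mathrm{Ass}(R/I^s) \setminus \mathrm{Min}(I)$ for some $s > 1$. Then the hypotheses of Lemma \ref{Min-existence} are satisfied for this $\mathfrak{p}$ together with the variable $x_i \in \mathfrak{p}$, since we are also given $s > 1$ and the square-freeness of $I$.

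Applying part (ii) of that lemma, I would conclude that $\mathfrak{p} \setminus x_i \notin \mathrm{Min}(I \setminus x_i)$. However, this directly contradicts the hypothesis $\mathfrak{p} \setminus x_i \in \mathrm{Min}(I \setminus x_i)$ of the corollary. The contradiction forces $\mathfrak{p} \notin \mathrm{Ass}(R/I^s) \setminus \mathrm{Min}(I)$ for every $s > 1$, finishing the proof.

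There is no real obstacle here: all the work was done inside the proof of Lemma \ref{Min-existence}(ii), where localization at $t := \prod_{x_\alpha \in \mathfrak{m} \setminus \mathfrak{p}} x_\alpha$ and the identification of the minimal primes of $I\setminus x_i$ were used to produce a minimal prime $\mathfrak{q}' \subsetneq \mathfrak{p}\setminus x_i$ in $I \setminus x_i$. The corollary is just a restatement optimized for the way the criterion will later be invoked (presumably to rule out candidate associated primes in the analysis of a minimal counterexample to the Conforti--Cornu\'ejols conjecture).
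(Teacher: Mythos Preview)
Your proposal is correct and matches the paper's treatment: the corollary is stated immediately after Lemma~\ref{Min-existence} without any proof, so the paper is implicitly relying on exactly the contrapositive observation you make. Your final paragraph slightly conflates the proofs of parts (i) and (ii) of Lemma~\ref{Min-existence} (the localization at $t$ occurs in part (i), while part (ii) just invokes (i) and a containment argument), but this does not affect the validity of your argument for the corollary.
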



\begin{remark}
\em{
It should be noted that we cannot extend  Lemma \ref{Min-existence}(i) to any monomial ideal. To see a counterexample, consider the monomial ideal 
$I = (y^2, xy)$ in $R=K[x,y]$. Then it is easy to check that $\mathrm{Min}(I)=(y)$ and $\mathfrak{p}=(x,y)\in \mathrm{Ass}(I^2)\setminus \mathrm{Min}(I)$ and $x\in \mathfrak{p}$, while $x\notin (y)$. 
}
\end{remark}


We are ready to prove Lemma \ref{Lem.2}, which will be used in Theorem \ref{Th.1}.

\begin{lemma}\label{Lem.2}
Let $I\subset R=K[x_1, \ldots, x_n]$ be a square-free monomial ideal such that there exist  a positive integer $\ell$ and some square-free monomial $v\in I^\ell$ with $(\mathfrak{p}^t:v)=\mathfrak{p}^{t-\ell}$ for all   $\mathfrak{p}\in \mathrm{Min}(I)$  and  $t>\ell$,  and $I\setminus x_i$ is normally torsion-free for all $x_i\in \mathrm{supp}(v)$.  Then $I$ is normally torsion-free. 
\end{lemma}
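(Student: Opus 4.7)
The plan is to induct on $t$, showing $\mathrm{Ass}(R/I^t)\subseteq \mathrm{Min}(I)$ for every $t\geq 1$. The case $t=1$ is immediate because $I$ is radical. Assuming the inclusion holds for every $1\leq t'<t$, it is equivalent (by Fact~\ref{Ex.6.1.25} together with the interpretation of ``no embedded primes'' in terms of primary decomposition) to $I^{t'}=I^{(t')}=\bigcap_{\mathfrak{q}\in\mathrm{Min}(I)}\mathfrak{q}^{t'}$ for every such $t'$, and this is the form of the inductive hypothesis I will carry forward.

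Suppose, toward a contradiction, that $\mathfrak{p}\in\mathrm{Ass}(R/I^t)\setminus\mathrm{Min}(I)$. The key step is to verify, for every $x_i\in\mathrm{supp}(v)$, that $\mathfrak{p}\setminus x_i\notin\mathrm{Ass}(R/(I\setminus x_i)^t)$; since $I\setminus x_i$ is normally torsion-free by hypothesis, this reduces to showing $\mathfrak{p}\setminus x_i\notin\mathrm{Min}(I\setminus x_i)$. If $x_i\in\mathfrak{p}$, this is exactly Lemma~\ref{Min-existence}(ii). If $x_i\notin\mathfrak{p}$, then $\mathfrak{p}\setminus x_i=\mathfrak{p}$; one picks some $\mathfrak{q}\in\mathrm{Min}(I)$ with $\mathfrak{q}\subsetneq\mathfrak{p}$ (which exists because $\mathfrak{p}\supseteq I$ is embedded), observes that $x_i\notin\mathfrak{q}$, and chooses any minimal prime $\mathfrak{q}'$ of $I\setminus x_i$ contained in $\mathfrak{q}$. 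Then $\mathfrak{q}'\supseteq I\setminus x_i$ and $\mathfrak{q}'\subsetneq\mathfrak{p}$, so $\mathfrak{p}\notin\mathrm{Min}(I\setminus x_i)$.

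With that hypothesis in hand, Proposition~\ref{Pro.2} forces $t>\ell$, and Proposition~\ref{Pro.1}, applied with $\{y_1,\ldots,y_s\}=\mathrm{supp}(v)$ so that $\prod_{i}y_i=v$, yields $\mathfrak{p}\in\mathrm{Ass}(R/(I^t:v))$. Since $1\leq t-\ell<t$, the inductive hypothesis provides $I^{t-\ell}=I^{(t-\ell)}$; combined with the colon identity $(\mathfrak{q}^t:v)=\mathfrak{q}^{t-\ell}$ for $\mathfrak{q}\in\mathrm{Min}(I)$ given in the hypothesis of the lemma, Proposition~\ref{Pro.3} gives $(I^t:v)=I^{t-\ell}$. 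Hence $\mathfrak{p}\in\mathrm{Ass}(R/I^{t-\ell})\subseteq\mathrm{Min}(I)$ by induction, contradicting $\mathfrak{p}\notin\mathrm{Min}(I)$.

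The step I expect to be the main obstacle is the subcase $x_i\notin\mathfrak{p}$ of the verification above: Lemma~\ref{Min-existence}(ii) addresses only variables inside $\mathfrak{p}$, while the hypothesis of Proposition~\ref{Pro.1} must hold for \emph{every} $x_i\in\mathrm{supp}(v)$. The descent from $\mathfrak{p}$ through a minimal prime of $I$ not containing $x_i$ down to a still smaller minimal prime of $I\setminus x_i$ resolves this case; once it is established, the rest is a direct assembly of Propositions~\ref{Pro.1}, \ref{Pro.2}, and~\ref{Pro.3} with the inductive hypothesis.
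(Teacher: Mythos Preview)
Your proposal is correct and follows essentially the same approach as the paper: the paper phrases the argument via a minimal $s$ with $I^s$ having embedded primes, which is logically equivalent to your strong induction on $t$, and the remaining steps (the two-case verification that $\mathfrak{p}\setminus x_i\notin\mathrm{Min}(I\setminus x_i)$, then the chain Proposition~\ref{Pro.2} $\Rightarrow$ Proposition~\ref{Pro.1} $\Rightarrow$ Proposition~\ref{Pro.3}) are identical in substance and order.
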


\begin{proof}
We assume $I$ is not normally torsion-free and seek a contradiction.  This implies that there exists a positive integer $s$ such that $I^s$ has embedded primes. Suppose that $s$ is minimal with respect to this property.  Due to $I$ is a square-free monomial ideal, we have $\mathrm{Ass}(R/I)=\mathrm{Min}(I)$, and so  $s\geq 2$. Let $\mathfrak{q}$ be an embedded prime ideal of $I^s$.  Since   $I\setminus x_i$ is normally torsion-free    for all $x_i\in \mathrm{supp}(v)$, we have 
$\mathrm{Ass}(R/(I\setminus x_i)^s)=\mathrm{Min}(I\setminus x_i)$   for all $x_i\in \mathrm{supp}(v)$. Fix   $x_i\in \mathrm{supp}(v)$. 
 In what follows, we are going to verify that $\mathfrak{q}\setminus x_i \notin \mathrm{Min}(I\setminus x_i)$. To see this, we consider the following cases: 

  \bigskip
\textbf{Case 1.}   $x_i\notin \mathfrak{q}$. It follows from $\mathfrak{q}\in \mathrm{Ass}(R/I^s) \setminus \mathrm{Min}(I)$ that there exists some 
  $\mathfrak{p}\in  \mathrm{Min}(I)$ such that  $\mathfrak{p} \subsetneq \mathfrak{q}$. Since $x_i \notin \mathfrak{q}$, this yields that 
  $x_i \notin \mathfrak{p}$. In particular, we have $\mathfrak{q}=\mathfrak{q}\setminus x_i$ and $\mathfrak{p}=\mathfrak{p}\setminus x_i$.  
  If   $\mathfrak{p}\setminus x_i \in \mathrm{Min}(I\setminus x_i)$, then $\mathfrak{q}\setminus x_i \notin \mathrm{Min}(I\setminus x_i)$, and our argument is over. We thus assume that  $\mathfrak{p}\setminus x_i \notin \mathrm{Min}(I\setminus x_i)$. This gives  that there exists some 
  $\mathfrak{p}'\in \mathrm{Min}(I\setminus x_i)$ such that $\mathfrak{p}' \subsetneq \mathfrak{p}\setminus x_i$, and so  
  $\mathfrak{p}' \subsetneq  \mathfrak{q} \setminus x_i$. Consequently, one has   $\mathfrak{q}\setminus x_i \notin  \mathrm{Min}(I\setminus x_i)$.  
  
\bigskip
\textbf{Case 2.}  $x_i\in \mathfrak{q}$. In the light of   Lemma  \ref{Min-existence}(ii), we can deduce that  $\mathfrak{q}\setminus x_i \notin
 \mathrm{Min}(I\setminus x_i)$.

\bigskip
We therefore get  $\mathfrak{q}\setminus x_i \notin \mathrm{Ass}(R/(I\setminus x_i)^s)$  for all  $x_i \in \mathrm{supp}(v)$.  In view of Proposition \ref{Pro.1}, we have $\mathfrak{q}\in \mathrm{Ass}(R/(I^s:v))$.    It follows also  from Proposition  \ref{Pro.2}  that $s>\ell$. Thanks to $s$ is minimal, we get $I^{s-\ell}$ has no embedded  primes, and thus  $I^{s-\ell}=I^{(s-\ell)}$. We can  conclude from Proposition \ref{Pro.3} that $(I^s:v)=I^{s-\ell}$. This gives that
   $\mathfrak{q} \in \mathrm{Ass}(R/I^{s-\ell})$, which contradicts our assumption that $I^{s-\ell}$ has no  embedded primes. Accordingly, 
   this gives rise to     $I$ is normally torsion-free, and the proof is done.  
\end{proof}


We are  in a position to establish the first  main result of this section in the following theorem. Particularly, in Example \ref{Exa.NTF.1}, we will show that 
 how one can use Theorem \ref{Th.1} to detect  the normally torsion-freeness of square-free monomial ideals.

\begin{theorem} \label{Th.1}
Let $I\subset R=K[x_1, \ldots, x_n]$ be a square-free monomial ideal such that there exist  a positive integer $\ell$ and some square-free monomial $v\in I^\ell$  such that  $v\in \mathfrak{p}^\ell\setminus \mathfrak{p}^{\ell+1}$ for any $\mathfrak{p}\in \mathrm{Min}(I)$, and $I\setminus x_i$ is normally torsion-free for all  $x_i\in \mathrm{supp}(v)$.  Then $I$ is normally torsion-free. 
\end{theorem}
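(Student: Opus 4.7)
The plan is to recognize that this theorem is essentially a corollary of Lemma \ref{Lem.2}, so I only need to verify the one hypothesis of that lemma that is not already given directly: namely, that $(\mathfrak{p}^t:v)=\mathfrak{p}^{t-\ell}$ holds for every $\mathfrak{p}\in\mathrm{Min}(I)$ and every $t>\ell$. The other hypotheses of Lemma \ref{Lem.2} ($v\in I^\ell$ square-free, and $I\setminus x_i$ normally torsion-free for each $x_i\in\mathrm{supp}(v)$) are assumed verbatim in Theorem \ref{Th.1}.

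The reduction to this colon equality rests on the fact that $I$ is square-free, so every minimal prime $\mathfrak{p}\in\mathrm{Min}(I)$ is generated by a subset of the variables, say $\mathfrak{p}=(x_{i_1},\ldots,x_{i_r})$. For a monomial $u\in R$, define $\mathrm{ord}_\mathfrak{p}(u):=\sum_{j=1}^r \deg_{x_{i_j}}(u)$; a standard check shows $u\in\mathfrak{p}^s$ iff $\mathrm{ord}_\mathfrak{p}(u)\ge s$. Applying this to the square-free monomial $v$, the hypothesis $v\in\mathfrak{p}^\ell\setminus\mathfrak{p}^{\ell+1}$ forces $\mathrm{ord}_\mathfrak{p}(v)=\ell$ exactly (it cannot be larger since $v$ is square-free, and it is at least $\ell$ by assumption).

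Next I would verify the colon equality by a symmetric inclusion argument on monomials, using that the colon of monomial ideals is a monomial ideal. For a monomial $u$, $uv\in\mathfrak{p}^t$ iff $\mathrm{ord}_\mathfrak{p}(uv)\ge t$, and additivity of $\mathrm{ord}_\mathfrak{p}$ on monomials together with $\mathrm{ord}_\mathfrak{p}(v)=\ell$ gives $\mathrm{ord}_\mathfrak{p}(u)\ge t-\ell$, i.e., $u\in\mathfrak{p}^{t-\ell}$. The reverse inclusion $\mathfrak{p}^{t-\ell}\subseteq(\mathfrak{p}^t:v)$ follows from $v\in\mathfrak{p}^\ell$ directly.

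With the colon condition established, Lemma \ref{Lem.2} applies immediately and yields that $I$ is normally torsion-free. The whole argument is short; the only mildly nontrivial point is the sharp equality $\mathrm{ord}_\mathfrak{p}(v)=\ell$, which is exactly where the square-freeness of $v$ and the ``$\setminus\mathfrak{p}^{\ell+1}$" half of the hypothesis pull their weight. No serious obstacle is expected.
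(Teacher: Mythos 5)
Your proposal is correct and takes essentially the same route as the paper: both reduce the theorem to Lemma \ref{Lem.2} by verifying $(\mathfrak{p}^t:v)=\mathfrak{p}^{t-\ell}$ for every $\mathfrak{p}\in\mathrm{Min}(I)$ and $t>\ell$ via a $\mathfrak{p}$-degree count (the paper phrases it as ``$\mathfrak{p}$ contains exactly $\ell$ variables dividing $v$''). One cosmetic slip: the upper bound $\mathrm{ord}_{\mathfrak{p}}(v)\leq\ell$ comes from $v\notin\mathfrak{p}^{\ell+1}$ rather than from the square-freeness of $v$, but the sharp equality $\mathrm{ord}_{\mathfrak{p}}(v)=\ell$, and hence your argument, stands either way.
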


\begin{proof}
Suppose, on the contrary, that $I$ is not normally torsion-free. Also, assume that $s$ is  minimal such that $I^s$ has embedded prime ideals. 
 By virtue of   $I$ is a square-free monomial ideal, we must have   $s\geq 2$. In what follows, our aim is to show that 
  $(\mathfrak{p}^t:v)=\mathfrak{p}^{t-\ell}$ for all   $\mathfrak{p}\in \mathrm{Min}(I)$  and  $t>\ell$. 
Let  $\mathfrak{p}\in \mathrm{Min}(I)$  and  $t>\ell$.  Since $v \in I^{\ell}$, this gives that $v\in \mathfrak{p}^\ell$, and thus   
$\mathfrak{p}^{t-\ell} \subseteq (\mathfrak{p}^t:v)$.
 To establish the reverse inclusion, consider an arbitrary monomial $u \in  (\mathfrak{p}^t:v)$. Hence, $uv\in \mathfrak{p}^t$.  This implies that 
  there exist  variables  $x_{i_1},  \ldots,  x_{i_t} \in \mathcal{G}(\mathfrak{p})$ and some monomial $f$ in $R$ such that
  $uv=x_{i_1}  \cdots x_{i_t}f$.   Since  $v\in \mathfrak{p}^\ell\setminus \mathfrak{p}^{\ell+1}$ and $v$ is a square-free monomial,
   we  can derive  that $\mathfrak{p}$ contains exactly $\ell$ variables    that divide  $v$. This   leads to  $u\in \mathfrak{p}^{t-\ell}$. 
   Consequently, we get  $(\mathfrak{p}^t:v)=\mathfrak{p}^{t-\ell}$. It follows now from Lemma \ref{Lem.2}  that   $I$ is normally torsion-free, as claimed. 
  \end{proof}


Here, we recall the Conforti-Cornu\'e{j}ols conjecture. In fact, in  1990 \cite{CC}, Michele  Conforti and  G\'e{r}ard   Cornu\'e{j}ols     made up  the following conjecture, and it is still open.

\begin{conjecture} (\cite[Conjecture 1.6]{Cornuejols})  \label{Cornuejols}
A clutter has the MFMC property if and only if it has the packing property.
\end{conjecture}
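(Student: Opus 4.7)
This is the celebrated Conforti--Cornuéjols conjecture, which has resisted proof for over three decades, so I cannot realistically present a complete strategy; instead I describe the attack that is most natural given the machinery assembled in this paper. The easy direction (MFMC $\Rightarrow$ packing) is a standard consequence of LP duality: the max-flow min-cut equality evaluated at $a=\mathbf{1}$ is precisely the packing equality, so the content lies entirely in the converse. I would therefore focus on showing packing $\Rightarrow$ MFMC, which by Theorem \ref{Villarreal1} is equivalent to showing that packing implies $I(\mathcal{C})$ is normally torsion-free.

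The plan is a minimal-counterexample argument. Assume for contradiction that $\mathcal{C}$ is a clutter with the packing property but without the MFMC property, chosen to be minimal with respect to taking minors. By the minor-closed nature of both hypotheses (packing is preserved under deletion and contraction, and so is MFMC), every proper minor of $\mathcal{C}$ has both properties. Translating via Theorems \ref{Villarreal1}, \ref{NTF.Th.3.19}, and \ref{NTF.Th.3.21}, this says: $I:=I(\mathcal{C})$ is not normally torsion-free, yet $I\setminus x_i$ and $I/x_i$ are normally torsion-free for every variable $x_i$. The goal is then to derive a contradiction purely from the packing property together with this strong inductive information.

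At this point the natural tool is Theorem \ref{Th.1}: if one can exhibit a positive integer $\ell$ and a square-free monomial $v\in I^{\ell}$ with $v\in \mathfrak{p}^{\ell}\setminus\mathfrak{p}^{\ell+1}$ for every $\mathfrak{p}\in\mathrm{Min}(I)$, then (because $I\setminus x_i$ is already known to be normally torsion-free for every $x_i\in\mathrm{supp}(v)$) the theorem forces $I$ itself to be normally torsion-free, contradicting the choice of $\mathcal{C}$. Thus the task reduces to the following combinatorial existence question: given a clutter with the packing property, produce a square-free monomial witness $v$ of the above type. The packing property gives a maximum matching $M$ of size $\tau(\mathcal{C})$; a candidate for $v$ is $v=\prod_{e\in M}\prod_{x\in e}x$, and one should check that $v$ meets each minimal vertex cover in exactly $|M|$ variables.

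The main obstacle is precisely this last step, and it is where the classical difficulty resides. The packing equality $\nu(\mathcal{C})=\tau(\mathcal{C})$ guarantees control over a single configuration, whereas the condition $v\in\mathfrak{p}^{\ell}\setminus\mathfrak{p}^{\ell+1}$ for \emph{every} minimal cover $\mathfrak{p}$ is a uniform statement. One would need to exploit the minimality of $\mathcal{C}$ and the structural restrictions on minimum counterexamples (e.g.\ those of Cornuéjols--Guenin) to upgrade a single packing witness into a simultaneous witness against all minimal covers. This is the bridge between a one-parameter equality and the full integer-programming duality, and closing it is exactly what has kept the conjecture open; my proposal therefore is not so much a proof as a reduction of the conjecture to the concrete combinatorial problem of constructing such a uniform witness, which is the form Corollary \ref{Cor.3} seems designed to exploit.
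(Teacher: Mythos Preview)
The paper does not prove this statement; it is presented as an open conjecture (Conjecture~\ref{Cornuejols}) and remains open. There is therefore no proof in the paper to compare your attempt against. What the paper actually establishes is Corollary~\ref{Cor.3}, a necessary condition on a minimal counterexample, and your proposal is precisely the heuristic that leads to that corollary: assume a minimal counterexample, note that all proper minors are normally torsion-free, and observe that Theorem~\ref{Th.1} would then give a contradiction if a suitable square-free witness $v$ existed. You correctly identify that the missing step---producing such a $v$ from the packing property alone---is the genuine obstruction, and the paper makes no claim to overcome it. So your write-up is an accurate summary of the state of affairs and of the paper's contribution, but it is not (and does not pretend to be) a proof of the conjecture.
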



We now turn our attention to  the  following corollary which is the second main result of this section.

\begin{corollary}\label{Cor.3}
Let $I\subset R=K[x_1, \ldots, x_n]$ be a   minimal counterexample to the Conforti-Cornu\'e{j}ols conjecture. Then  for all positive integer $\ell$ and 
 square-free monomial $v\in I^\ell$,  there exists some   $\mathfrak{p}\in \mathrm{Min}(I)$ such that $v\in  \mathfrak{p}^{\ell+1}$. 
In particular, for all $v \in \mathcal{G}(I)$,  there exists some   $\mathfrak{p}\in \mathrm{Min}(I)$ such that $v\in  \mathfrak{p}^2$. 
\end{corollary}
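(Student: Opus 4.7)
The plan is to argue by contradiction and reduce everything to Theorem \ref{Th.1}. Suppose the main conclusion fails: there exist a positive integer $\ell$ and a square-free monomial $v\in I^\ell$ such that $v\notin \mathfrak{p}^{\ell+1}$ for every $\mathfrak{p}\in \mathrm{Min}(I)$. Since $v\in I^\ell\subseteq \mathfrak{p}^\ell$ for every minimal prime $\mathfrak{p}$ of $I$, this immediately upgrades to $v\in \mathfrak{p}^\ell\setminus \mathfrak{p}^{\ell+1}$ for each $\mathfrak{p}\in \mathrm{Min}(I)$, which is precisely the hypothesis required by Theorem \ref{Th.1}.

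The next step is to exploit the minimality of $I$ as a counterexample to Conjecture \ref{Cornuejols}. By Theorem \ref{Villarreal1}, failing the MFMC property is equivalent to failing normal torsion-freeness, so $I$ itself is not normally torsion-free while still having the packing property. The packing property is inherited by every minor of the associated clutter, so in particular every deletion $I\setminus x_i$ has the packing property; by the minimality of $I$, each such proper minor must additionally satisfy MFMC, and hence be normally torsion-free by Theorem \ref{Villarreal1}. Specializing to $x_i\in \mathrm{supp}(v)$, we obtain that $I\setminus x_i$ is normally torsion-free for all $x_i\in \mathrm{supp}(v)$.

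The two ingredients now match the hypotheses of Theorem \ref{Th.1} verbatim: a square-free monomial $v\in I^\ell$ with $v\in \mathfrak{p}^\ell\setminus \mathfrak{p}^{\ell+1}$ for every $\mathfrak{p}\in \mathrm{Min}(I)$, together with normal torsion-freeness of $I\setminus x_i$ for every $x_i\in \mathrm{supp}(v)$. The theorem then forces $I$ to be normally torsion-free, contradicting the fact that $I$ is a counterexample to MFMC. This proves the main statement. For the ``in particular'' clause, take $\ell=1$ and any $v\in \mathcal{G}(I)$; since $I$ is square-free, $v$ is a square-free monomial in $I=I^1$, and the main statement yields $\mathfrak{p}\in \mathrm{Min}(I)$ with $v\in \mathfrak{p}^2$.

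The main obstacle, if any, is conceptual rather than computational: one must correctly translate ``minimal counterexample'' into the algebraic statement that every deletion $I\setminus x_i$ is normally torsion-free, which in turn relies on the fact that the packing property is preserved under minors so that minimality really does force MFMC on every proper deletion. Once this translation is in place, Theorem \ref{Th.1} does all of the combinatorial work for us.
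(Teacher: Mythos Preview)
Your proof is correct and follows essentially the same approach as the paper: both use the minimality of $I$ to conclude that every deletion $I\setminus x_i$ is normally torsion-free, and then invoke Theorem~\ref{Th.1} (in contrapositive form) to derive a contradiction. Your write-up is slightly more explicit about why $v\in\mathfrak{p}^\ell\setminus\mathfrak{p}^{\ell+1}$ and about how minimality forces MFMC on proper minors, but the logical content is identical.
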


\begin{proof}
Due to  $I$ is a  minimal counterexample to the Conforti-Cornu\'e{j}ols conjecture, this means that $I$ satisfies the packing property, 
but $I$ is not normally torsion-free, and also  all its proper minors satisfy the packing property, and so are normally torsion-free. 
This yields that  for all positive integer $\ell$ and square-free monomial $v\in I^\ell$,  we can deduce that  $I\setminus x_i$ is normally
 torsion-free for all $x_i\in \mathrm{supp}(v)$. Now, the claim is an immediate consequence of Theorem \ref{Th.1}.  
\end{proof}


\begin{remark} \label{Rem. NTF}
\em{
It has already been stated  in \cite[Corollary 3.10]{HM} that a minimal counterexample to the Conforti-Cornu\'e{j}ols conjecture cannot be unmixed. 
 In comparison with this result, Corollary  \ref{Cor.3} gives another criterion.  To see an example, consider the following square-free monomial ideal 
 $$I=(x_6x_7x_8, x_5x_6x_7, x_1x_2x_7, x_1x_2x_3, x_3x_4x_5x_6, x_2x_3x_4x_5) \subset K[x_1, \ldots, x_8].$$
   Using \textit{Macaulay2} \cite{GS}, we get  
  \begin{align*}
  \mathrm{Min}(I)=\mathrm{Ass}(I)=  \{&(x_6,x_2), (x_7,x_3), (x_6,x_3,x_1), (x_6,x_4,x_1), (x_7,x_4,x_1),  \\
  & (x_6,x_5,x_1), (x_7,x_5,x_1),  (x_8,x_5,x_1), (x_7,x_4,x_2), \\
  &(x_7,x_5,x_2),   (x_8,x_5,x_2)\}.
  \end{align*}
    We deduce that $I$ is not unmixed. Hence, we cannot use \cite[Corollary 3.10]{HM}, and so we cannot judge whether $I$ is a minimal counterexample to the 
  Conforti-Cornu\'e{j}ols conjecture or not, while Corollary  \ref{Cor.3} enables us to argue on it. To do this, let $\ell:=2$ and $v:=x_6x_7x_8 \in I$. It is not hard to check that 
  $v\notin \mathfrak{p}^2$ for all $\mathfrak{p}\in \mathrm{Min}(I)$. This implies that $I$ is  not a minimal counterexample to the Conforti-Cornu\'e{j}ols conjecture. However, we will show in Example \ref{Exam. Linear. C8}  that $I$ is indeed normally torsion-free. 
  }
\end{remark}


In order to demonstrate Proposition \ref{Pro.6}, we require to utilize the next result. 
 
 \begin{proposition} \label{Pro.5}
 Assume that  $Q=(x^{\alpha_1}_{i_1}, \ldots, x^{\alpha_r}_{i_r}) \subset  R=K[x_1, \ldots, x_n]$ is an irreducible primary monomial ideal  with $\alpha_1, \ldots, \alpha_r$ are positive integers. Then $\left(Q^k:_R \prod_{j=1}^sx^{\alpha_j}_{i_j}\right)=Q^{k-s}$ for all $k>s$ and $s\leq r$. 
 \end{proposition}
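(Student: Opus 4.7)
The proof naturally splits into two inclusions. The containment $Q^{k-s} \subseteq \bigl(Q^k :_R \prod_{j=1}^s x_{i_j}^{\alpha_j}\bigr)$ is immediate: since $x_{i_1}^{\alpha_1}, \ldots, x_{i_s}^{\alpha_s}$ are generators of $Q$, their product lies in $Q^s$, and hence $Q^{k-s} \cdot \prod_{j=1}^s x_{i_j}^{\alpha_j} \subseteq Q^{k-s}\cdot Q^s \subseteq Q^k$.

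For the reverse inclusion, my plan is to give a direct combinatorial description of membership in $Q^k$ and then read off the colon. Writing $a_j := x_{i_j}^{\alpha_j}$, the minimal monomial generators of $Q^k$ are exactly the products $a_1^{e_1} \cdots a_r^{e_r}$ with $e_1 + \cdots + e_r = k$. Consequently, for a monomial $u = \prod_{i=1}^n x_i^{b_i}$, one has $u \in Q^k$ if and only if there exist nonnegative integers $e_1,\ldots,e_r$ with $\sum_{j=1}^r e_j = k$ and $\alpha_j e_j \leq b_{i_j}$ (equivalently $e_j \leq \lfloor b_{i_j}/\alpha_j\rfloor$) for every $j$. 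Since we can freely distribute the total $k$ among indices subject only to these pointwise caps, this is possible if and only if $\sum_{j=1}^r \lfloor b_{i_j}/\alpha_j\rfloor \geq k$.

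With this criterion in hand, take any monomial $u \in \bigl(Q^k :_R \prod_{j=1}^s a_j\bigr)$. Then $u\prod_{j=1}^s a_j \in Q^k$ translates, via the floor-sum criterion, to
$$\sum_{j=1}^s \left\lfloor \frac{b_{i_j}+\alpha_j}{\alpha_j}\right\rfloor + \sum_{j=s+1}^r \left\lfloor \frac{b_{i_j}}{\alpha_j}\right\rfloor \;\geq\; k.$$
Using the elementary identity $\lfloor (b+\alpha)/\alpha\rfloor = \lfloor b/\alpha\rfloor + 1$, the left-hand side equals $s + \sum_{j=1}^r \lfloor b_{i_j}/\alpha_j\rfloor$, so the inequality becomes $\sum_{j=1}^r \lfloor b_{i_j}/\alpha_j\rfloor \geq k-s$, which by the criterion is precisely $u \in Q^{k-s}$.

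No serious obstacle arises: the argument is a bookkeeping check on exponents. The only conceptual ingredient is the floor-sum characterization of $Q^k$, which is available because $Q$ is generated by pure powers of \emph{distinct} variables (so the generators $a_j$ form a permutable regular sequence, and the caps $e_j \leq \lfloor b_{i_j}/\alpha_j\rfloor$ are independent across $j$). An alternative, less self-contained route would be to induct on $s$: one establishes $(Q^k:_R a_1) = Q^{k-1}$ by decomposing $Q^k = \sum_{e=0}^{k} a_1^{\,e}(a_2,\ldots,a_r)^{k-e}$ and applying Lemma~\ref{Kaplansky} to the summands, and then iterates using the symmetry of $a_1,\ldots,a_r$. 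I would present the direct exponent proof, since it is shorter and makes transparent the role of the integer parameters $\alpha_j$.
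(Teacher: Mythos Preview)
Your proof is correct and takes a genuinely different route from the paper. The paper argues by induction on $s$: for the base case $s=1$ it writes $Q^k$ as the sum $\sum_{\lambda_1+\cdots+\lambda_r=k}(x_{i_1}^{\alpha_1})^{\lambda_1}\cdots(x_{i_r}^{\alpha_r})^{\lambda_r}$, computes the colon by $x_{i_1}^{\alpha_1}$ term by term (splitting according to whether $\lambda_1=0$ or $\lambda_1\geq 1$), and observes each summand lands in $Q^{k-1}$; the inductive step is then the usual factorization $(Q^k:\prod_{j=1}^s a_j)=((Q^k:\prod_{j=1}^{s-1}a_j):a_s)$. Your approach instead isolates a single combinatorial criterion---$u\in Q^k$ if and only if $\sum_{j=1}^r\lfloor b_{i_j}/\alpha_j\rfloor\geq k$---and deduces the result in one stroke via the identity $\lfloor(b+\alpha)/\alpha\rfloor=\lfloor b/\alpha\rfloor+1$. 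This is shorter, avoids induction entirely, and makes the role of the individual exponents $\alpha_j$ transparent; the paper's argument is more hands-on with generators but requires no auxiliary criterion. Amusingly, the ``alternative, less self-contained route'' you sketch at the end is essentially the paper's actual proof (though the paper handles the base case by direct expansion rather than by invoking Lemma~\ref{Kaplansky}).
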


\begin{proof}
Let $k>s$ and $s\leq r$.  Since  $x^{\alpha_j}_{i_j} \in Q$  for all $j=1, \ldots, s$, this gives that $ \prod_{j=1}^sx^{\alpha_j}_{i_j}\in Q^s$, and so $Q^{k-s} \subseteq \left(Q^k:_R \prod_{j=1}^sx^{\alpha_j}_{i_j}\right)$. To finish the argument,  one has  to   show the opposite inclusion, that is, 
$\left(Q^k:_R \prod_{j=1}^sx^{\alpha_j}_{i_j}\right) \subseteq Q^{k-s}$. To accomplish  this, we proceed by induction on $s$. Let $s=1$.
  Here,   one can conclude  the following equalities
\begin{align*}
 Q^{k}:_R (x^{\alpha_1}_{i_1})& =\left(\displaystyle\sum_{\lambda_1+\cdots+\lambda_r =k} (x^{\alpha_1}_{i_1})^{\lambda_1}\cdots 
  (x^{\alpha_r}_{i_r})^{\lambda_r}\right):_R (x^{\alpha_1}_{i_1})\\
   &= \displaystyle\sum_{\lambda_1+\cdots+\lambda_r =k} (x^{\alpha_1}_{i_1})^{\lambda_1}\cdots 
  (x^{\alpha_r}_{i_r})^{\lambda_r}:_R (x^{\alpha_1}_{i_1}) \\
 &=\displaystyle\sum_{\lambda_1=0,~\lambda_2+\cdots+\lambda_r =k} (x^{\alpha_2}_{i_2})^{\lambda_2}\cdots 
  (x^{\alpha_r}_{i_r})^{\lambda_r}
\\ & 
+ \displaystyle\sum_{\lambda_1\geq 1,~\lambda_1+\cdots+\lambda_r =k} (x^{\alpha_1}_{i_1})^{\lambda_1-1} 
(x^{\alpha_2}_{i_2})^{\lambda_2} \cdots   (x^{\alpha_r}_{i_r})^{\lambda_r}.
   \end{align*}
In the light of        $(x^{\alpha_1}_{i_1})^{\lambda_1-1}\cdots   (x^{\alpha_r}_{i_r})^{\lambda_r} \subseteq Q^{k-1}$ with $\lambda_1\geq 1,~\lambda_1+\cdots+\lambda_r =k$ and $(x^{\alpha_2}_{i_2})^{\lambda_2}\cdots   (x^{\alpha_r}_{i_r})^{\lambda_r}  \subseteq Q^{k-1}$ with 
  $\lambda_2+\cdots+\lambda_r=k$, we get   $\left( Q^{k}:_R (x^{\alpha_1}_{i_1})\right) \subseteq  Q^{k-1}$. Hence, the claim holds for $s=1$.  
  Suppose, inductively, that $s>1$ and that the result has been  shown  for all values less than $s$. The  inductive  hypothesis gives that $\left(Q^k:_R \prod_{j=1}^{s-1}x^{\alpha_j}_{i_j}\right) \subseteq   Q^{k-s+1}$, and so  we have the following 
  \begin{align*}
  \left(Q^k:_R \prod_{j=1}^{s}x^{\alpha_j}_{i_j}\right)&= \left(\left(Q^k:_R \prod_{j=1}^{s-1}x^{\alpha_j}_{i_j}\right):_Rx^{\alpha_s}_{i_s}\right)\\
& \subseteq \left(Q^{k-s+1}:_R x^{\alpha_s}_{i_s}\right)\\
&\subseteq Q^{k-s}.
\end{align*}   
    This completes the inductive step, and hence the assertion  has been shown by induction.    
\end{proof}


\begin{proposition}\label{Pro.6}
Let $I\subset R=K[x_1, \ldots, x_n]$ be a square-free monomial ideal  and 
$\{u_1, \ldots, u_{\beta_1(I)}\}$  be a maximal independent set of minimal generators of $I$  and $\mathfrak{p} \in \mathrm{Min}(I)$.
If  $\mathrm{ht}\mathfrak{p}={\beta_1(I)}$, then  $\left(\mathfrak{p}^{r}:_R\prod_{i=1}^{\beta_1(I)}u_i\right)=\mathfrak{p}^{r-{\beta_1(I)}}$ for all $r>{\beta_1(I)}$.  
\end{proposition}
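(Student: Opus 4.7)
The plan is to reduce the colon computation to Proposition \ref{Pro.5} by splitting the square-free monomial $v=\prod_{i=1}^{\beta_1(I)}u_i$ into its ``$\mathfrak{p}$-part'' and its ``non-$\mathfrak{p}$-part,'' and handling each using the tools already at our disposal.

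First, I would exploit the interaction between independence and height. Since $\{u_1,\dots,u_{\beta_1(I)}\}$ is independent, the $u_i$ are pairwise coprime, so $v=\prod_{i=1}^{\beta_1(I)}u_i$ is square-free. Each $u_i$ lies in $I\subseteq\mathfrak{p}$, so $\mathfrak{p}$ contains at least one variable dividing $u_i$; the coprimality of the $u_i$ forces these ``witnessing'' variables to be distinct across different $i$, giving at least $\beta_1(I)$ variables of $\mathfrak{p}$ involved. Because $\mathrm{ht}\,\mathfrak{p}=\beta_1(I)$ and $\mathfrak{p}$ is a monomial prime, $\mathfrak{p}$ has exactly $\beta_1(I)$ variable generators, so in fact each $u_i$ contains exactly one variable of $\mathfrak{p}$ and together these variables exhaust $\mathcal{G}(\mathfrak{p})$.

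This lets me factor $v=v_1v_2$ where $v_1=\prod_{x_j\in\mathcal{G}(\mathfrak{p})}x_j$ is the product of all variable generators of $\mathfrak{p}$, and $v_2$ is a square-free monomial with $\gcd(v_2,x_j)=1$ for every $x_j\in\mathcal{G}(\mathfrak{p})$. Applying Lemma \ref{Kaplansky} to the regular sequence of variables, with $\mathfrak{p}^r$ generated by monomials in the variables of $\mathcal{G}(\mathfrak{p})$ and $v_2$ a monomial in the complementary variables, yields $(\mathfrak{p}^r:_R v_2)=\mathfrak{p}^r$. Hence
\[
(\mathfrak{p}^r:_R v)=\bigl((\mathfrak{p}^r:_R v_2):_R v_1\bigr)=(\mathfrak{p}^r:_R v_1).
\]

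Finally, I would apply Proposition \ref{Pro.5} directly: writing $\mathcal{G}(\mathfrak{p})=\{x_{i_1},\dots,x_{i_{\beta_1(I)}}\}$ with $\alpha_j=1$ for every $j$, the proposition (with $k=r$ and $s=\beta_1(I)$, noting $r>\beta_1(I)$) gives $(\mathfrak{p}^r:_R v_1)=\mathfrak{p}^{r-\beta_1(I)}$, which is exactly the asserted equality. The only step that needs care is the counting argument in the first paragraph verifying that each $u_i$ contributes precisely one variable of $\mathfrak{p}$ and that these contributions cover all of $\mathcal{G}(\mathfrak{p})$; once this combinatorial fact is pinned down, the colon computation is a clean application of Lemma \ref{Kaplansky} followed by Proposition \ref{Pro.5}.
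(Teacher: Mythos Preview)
Your proposal is correct and follows essentially the same route as the paper: both arguments use the height hypothesis together with pairwise coprimality to show that each $u_i$ contributes exactly one variable of $\mathfrak{p}$ (so that $\prod u_i$ factors as the product of all generators of $\mathfrak{p}$ times a monomial in the complementary variables), then invoke Lemma~\ref{Kaplansky} to strip off the complementary factor and Proposition~\ref{Pro.5} to finish. Your write-up makes the counting step a bit more explicit, but the logic is identical.
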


\begin{proof}
We  put $m:={\beta_1(I)}$. Let   $\mathrm{ht}\mathfrak{p}=m$.   As  $u_i\in\mathfrak{p}$  for all $i=1, \ldots, m$,  
and  $\mathrm{gcd}(u_i,u_j)=1$ for any $1\leq i\neq j\leq m$,  we get     $|\mathrm{supp}(u_i) \cap \mathrm{supp}(\mathfrak{p})|=1$ for all  $i=1, \ldots, m$.  Without loss of generality, assume   $\mathrm{supp}(u_i) \cap \mathrm{supp}(\mathfrak{p})=\{x_i\}$  for all $i=1, \ldots, m$. 
Hence, we  can write $\prod_{i=1}^mu_i=\omega\prod_{i=1}^mx_i$ such that $\mathrm{gcd}(\omega, \prod_{i=1}^mx_i)=1$. 
It follows from   Lemma  \ref{Kaplansky}   that  
$$\left(\mathfrak{p}^{r}:_R\prod_{i=1}^mu_i\right)=\left(\mathfrak{p}^{r}:_R\omega\prod_{i=1}^mx_i\right)=\left(\mathfrak{p}^{r}
:_R\prod_{i=1}^mx_i\right).$$ 
 We can also deduce  from Proposition \ref{Pro.5} that  $\left(\mathfrak{p}^{r}:_R\prod_{i=1}^mx_i\right)=\mathfrak{p}^{r-m}.$
  We thus obtain  $\left(\mathfrak{p}^{r}:_R\prod_{i=1}^mu_i\right)=\mathfrak{p}^{r-m}$, and the proof is done.  
 \end{proof}


\begin{remark}
\em{
It should be observed that the converse of Proposition \ref{Pro.6} ﻿is not always true. To see a counterexample, consider the square-free monomial ideal 
$I=(x_1x_2x_3, x_4x_5, x_6x_7x_8, x_1x_3x_4, x_4x_6x_9)\subset R=K[x_1, \ldots, x_9]$. It is routine to check that 
$\{u_1:=x_1x_2x_3, u_2:=x_4x_5, u_3:=x_6x_7x_8\}$  is  a maximal independent set of minimal generators of $I$. This implies that 
$\beta_1(I)=3$.  Using \textit{Macaulay2} \cite{GS}, we obtain $\mathfrak{p}:=(x_1, x_5, x_8, x_9)\in \mathrm{Min}(I)$. We claim that 
$\left(\mathfrak{p}^{r}:_R\prod_{i=1}^{3}u_i\right)=\mathfrak{p}^{r-3}$ for all $r> 3$. Let $r>3$. 
Plugging Proposition  \ref{Pro.5} into   Lemma  \ref{Kaplansky},   we obtain  the following equalities 
\[
\left(\mathfrak{p}^{r}:_R\prod_{i=1}^{3}u_i\right)=\left(\mathfrak{p}^{r}:_R (x_1x_5x_8)(x_2x_3x_4x_6x_7)\right)
=\left(\mathfrak{p}^{r}:_R   x_1x_5x_8\right)=\mathfrak{p}^{r-3}.
\]
 Because  $\mathrm{ht}\mathfrak{p}=4$ and  $\beta_1(I)=3$, this shows that the converse of Proposition \ref{Pro.6} ﻿is not always true, as  required.    
}
\end{remark}


We can now combine together  Lemma \ref{Lem.2} and Proposition  \ref{Pro.6}  to recover  Theorem 1.1 in \cite{SNQ1} in  the following corollary.
\begin{corollary}\label{Criterion.NTF2}
Let $I\subset R=K[x_1, \ldots, x_n]$ be a square-free monomial ideal  and 
$\{u_1, \ldots, u_{\beta_1(I)}\}$  be a maximal independent set of minimal generators of $I$  such that $\mathrm{ht}\mathfrak{p}={\beta_1(I)}$ 
for each  $\mathfrak{p} \in \mathrm{Min}(I)$, and   $I\setminus x_i$ is normally torsion-free for all  $x_i\in \mathrm{supp}(\prod_{i=1}^{\beta_1(I)}u_i)$. 
 Then $I$ is normally torsion-free. 
\end{corollary}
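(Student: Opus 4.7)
The plan is to identify the pieces of the corollary's hypotheses with the inputs required by Lemma \ref{Lem.2}, and then use Proposition \ref{Pro.6} as the mechanism that supplies the colon-ideal identity for minimal primes. Concretely, I set $\ell := \beta_1(I)$ and $v := \prod_{i=1}^{\ell} u_i$. Because $\{u_1,\ldots,u_\ell\}$ is an independent set in $I$, we have $\gcd(u_i,u_j)=1$ for $i\neq j$; since each $u_i$ is a square-free minimal generator, this forces $v$ itself to be a square-free monomial in $R$. Clearly $v = u_1\cdots u_\ell \in I^\ell$.

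Next, I would fix an arbitrary $\mathfrak{p}\in\mathrm{Min}(I)$ and invoke Proposition \ref{Pro.6}. The hypothesis $\mathrm{ht}\,\mathfrak{p}=\beta_1(I)$ is exactly the height condition required there, so Proposition \ref{Pro.6} yields
\[
\left(\mathfrak{p}^{t}:_R v\right)=\left(\mathfrak{p}^{t}:_R\prod_{i=1}^{\ell}u_i\right)=\mathfrak{p}^{t-\ell}\qquad\text{for all }t>\ell.
\]
Since $\mathfrak{p}$ was arbitrary in $\mathrm{Min}(I)$, this equality holds for every minimal prime of $I$.

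At this point all the hypotheses of Lemma \ref{Lem.2} are satisfied: $I$ is square-free, $\ell$ is a positive integer, $v\in I^\ell$ is a square-free monomial with the required colon-ideal identity on each $\mathfrak{p}\in\mathrm{Min}(I)$, and by assumption $I\setminus x_i$ is normally torsion-free for every $x_i\in\mathrm{supp}(v)$ (note that $\mathrm{supp}(v)=\bigcup_{i=1}^{\ell}\mathrm{supp}(u_i)$ because $v$ is a product of pairwise coprime monomials). Applying Lemma \ref{Lem.2} directly gives that $I$ is normally torsion-free, completing the proof.

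There is essentially no obstacle, since the corollary is a clean composition of two earlier results; the only sanity check one must perform is that $v$ really is a square-free monomial and that its support decomposes as the disjoint union of the supports of the $u_i$, which is immediate from the independence hypothesis. Everything else is bookkeeping.
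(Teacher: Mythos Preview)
Your proposal is correct and follows exactly the approach the paper indicates: the authors simply state that the corollary follows by combining Lemma~\ref{Lem.2} and Proposition~\ref{Pro.6}, and your argument does precisely that, setting $\ell=\beta_1(I)$, $v=\prod_{i=1}^{\ell}u_i$, and verifying the square-freeness of $v$ from the independence of the $u_i$.
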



\section{On the normally torsion-freeness of  linear combinations of two normally torsion-free ideals}

  This section is devoted to giving   a necessary and sufficient condition to determine the normally torsion-freeness  of a linear combination of two  
  normally torsion-free square-free monomial  ideals. To reach this  goal, we begin with the following proposition which will be used in  
  Proposition  \ref{Bipartite}.

  
  \begin{proposition}\label{Intersection}
  Let $I$ and $J$ be two square-free monomial ideals in a polynomial ring $R=K[x_1, \ldots, x_n]$ over a field $K$. Then, for all $k\geq 1$, we have 
  $(I\cap J)^{(k)}=I^{(k)} \cap J^{(k)}$. 
  \end{proposition}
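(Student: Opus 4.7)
The plan is to prove both inclusions by reducing everything to an intersection of powers of minimal primes, using Fact~\ref{Ex.6.1.25}. Since $I$ and $J$ are square-free monomial ideals, they are radical, and hence
\[
I=\bigcap_{\mathfrak{p}\in\mathrm{Min}(I)}\mathfrak{p},\qquad J=\bigcap_{\mathfrak{q}\in\mathrm{Min}(J)}\mathfrak{q},
\]
with the minimal primes being monomial and pairwise incomparable. The intersection $I\cap J$ is again a square-free monomial ideal, so the same holds for it. By Fact~\ref{Ex.6.1.25} applied to each of $I$, $J$, and $I\cap J$, we obtain
\[
I^{(k)}=\bigcap_{\mathfrak{p}\in\mathrm{Min}(I)}\mathfrak{p}^{k},\ \ J^{(k)}=\bigcap_{\mathfrak{q}\in\mathrm{Min}(J)}\mathfrak{q}^{k},\ \ (I\cap J)^{(k)}=\bigcap_{\mathfrak{r}\in\mathrm{Min}(I\cap J)}\mathfrak{r}^{k}.
\]
In particular $I^{(k)}\cap J^{(k)}=\bigcap_{\mathfrak{p}\in\mathrm{Min}(I)\cup\mathrm{Min}(J)}\mathfrak{p}^{k}$, so the proposition amounts to comparing these two intersections of powers of monomial primes.

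Next I would establish the combinatorial/prime-theoretic key fact: $\mathrm{Min}(I\cap J)\subseteq\mathrm{Min}(I)\cup\mathrm{Min}(J)$, and conversely every $\mathfrak{p}\in\mathrm{Min}(I)\cup\mathrm{Min}(J)$ contains some $\mathfrak{r}\in\mathrm{Min}(I\cap J)$. For the first inclusion, if $\mathfrak{r}\in\mathrm{Min}(I\cap J)$, then $\mathfrak{r}\supseteq I\cap J\supseteq IJ$, so $\mathfrak{r}\supseteq I$ or $\mathfrak{r}\supseteq J$; the minimality of $\mathfrak{r}$ over $I\cap J$ together with the existence of a minimal prime of $I$ (resp.\ $J$) sitting inside $\mathfrak{r}$ forces $\mathfrak{r}$ itself to lie in $\mathrm{Min}(I)\cup\mathrm{Min}(J)$. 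For the converse, any $\mathfrak{p}\in\mathrm{Min}(I)\cup\mathrm{Min}(J)$ contains $I\cap J$, hence contains at least one minimal prime of $I\cap J$.

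With these facts in hand, both inclusions fall out. For $\supseteq$: if $u\in I^{(k)}\cap J^{(k)}$, then $u\in\mathfrak{p}^{k}$ for every $\mathfrak{p}\in\mathrm{Min}(I)\cup\mathrm{Min}(J)$, and in particular for every $\mathfrak{r}\in\mathrm{Min}(I\cap J)$, so $u\in(I\cap J)^{(k)}$. For $\subseteq$: let $u\in(I\cap J)^{(k)}$ and fix any $\mathfrak{p}\in\mathrm{Min}(I)\cup\mathrm{Min}(J)$. By the second part of the key fact there exists $\mathfrak{r}\in\mathrm{Min}(I\cap J)$ with $\mathfrak{r}\subseteq\mathfrak{p}$, and then $u\in\mathfrak{r}^{k}\subseteq\mathfrak{p}^{k}$. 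Since $\mathfrak{p}$ was arbitrary, $u\in I^{(k)}\cap J^{(k)}$.

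I do not anticipate a serious obstacle: the only step requiring a brief verification is $\mathrm{Min}(I\cap J)\subseteq\mathrm{Min}(I)\cup\mathrm{Min}(J)$, which is standard for radical ideals. The essential structural input is the availability of Fact~\ref{Ex.6.1.25} for all three ideals simultaneously, which is guaranteed because $I\cap J$ inherits the square-freeness needed to ensure its minimal primary decomposition has pairwise incomparable prime radicals.
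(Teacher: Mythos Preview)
Your proof is correct and follows essentially the same approach as the paper: both arguments reduce to Fact~\ref{Ex.6.1.25} and then compare $\mathrm{Min}(I\cap J)$ with $\mathrm{Min}(I)\cup\mathrm{Min}(J)$. The paper carries this out via a two-case split according to whether some $\mathfrak{p}_i$ and $\mathfrak{q}_j$ are comparable, whereas you prove the two structural facts about minimal primes once and for all and derive both inclusions uniformly; the underlying content is the same.
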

  
  \begin{proof}
  Let $\mathrm{Min}(I)=\{\mathfrak{p}_1, \ldots, \mathfrak{p}_s\}$ and $\mathrm{Min}(J)=\{\mathfrak{q}_1, \ldots, \mathfrak{q}_t\}$. 
   Now, we may consider the following cases: 
   
   \bigskip
\textbf{Case 1.}   $\mathfrak{p}_i \nsubseteq  \mathfrak{q}_j$ and $  \mathfrak{q}_j \nsubseteq \mathfrak{p}_i$ for all $i=1, \ldots, s$ and 
$j=1,\ldots, t$. This implies that $\mathrm{Min}(I\cap J)=\{\mathfrak{p}_1, \ldots, \mathfrak{p}_s, \mathfrak{q}_1, \ldots, \mathfrak{q}_t\}=
\mathrm{Min}(I)\cup \mathrm{Min}(J).$ It follows from    Definition \ref{Def.4.3.22}    and   Fact  \ref{Ex.6.1.25}    that 
$$(I\cap J)^{(k)}=\mathfrak{p}_1^{(k)} \cap \cdots  \cap   \mathfrak{p}_s^{(k)} \cap  \mathfrak{q}_1^{(k)} \cap \cdots  \cap   \mathfrak{q}_t^{(k)} 
 =  I^{(k)} \cap J^{(k)}.$$

\bigskip
\textbf{Case 2.}     $\mathfrak{p}_i \subseteq  \mathfrak{q}_j$ or  $\mathfrak{q}_j \subseteq \mathfrak{p}_i$ for some  $1\leq i \leq s$ and 
 some  $1\leq j \leq t$. We assume that  $\mathfrak{p}_i \subseteq  \mathfrak{q}_j$  for some  $1\leq i \leq s$ and  some  $1\leq j \leq t$. Hence, 
$\mathfrak{q}_j$   can  be removed  from $\mathfrak{p}_1 \cap \cdots \cap  \mathfrak{p}_s \cap  \mathfrak{q}_1 \cap  \cdots \cap  \mathfrak{q}_t$.
 On the other hand, since $\mathfrak{p}_i^{(k)}=\mathfrak{p}_i^k$ and $\mathfrak{q}_j^{(k)}=\mathfrak{q}_j^k$ and $\mathfrak{p}_i \subseteq  \mathfrak{q}_j$, we get  $\mathfrak{p}_i^{(k)}\subseteq  \mathfrak{q}_j^{(k)}$. This means that $\mathfrak{q}_j^{(k)}$ can  be deleted from 
$\mathfrak{p}_1^{(k)} \cap \cdots  \cap   \mathfrak{p}_s^{(k)} \cap  \mathfrak{q}_1^{(k)} \cap \cdots  \cap   \mathfrak{q}_t^{(k)}$, 
as required. The case  $\mathfrak{q}_j \subseteq \mathfrak{p}_i$ can be shown by a similar  argument.   We can therefore  deduce that 
 $(I\cap J)^{(k)}=I^{(k)} \cap J^{(k)}$. This completes the proof. 
  \end{proof}
 

The following proposition  is essential for us to provide  Example   \ref{Exa.NTF.2}. 

 \begin{proposition} \label{Bipartite}
Let   $G$ be  a  bipartite graph with $V(G)=\{x_1, \ldots, x_n\}$ and $I=(u_1, \ldots, u_t, u_{t+1})$ be  its edge ideal. 
 Let $L=(u_1, \ldots, u_t) + (x_{n+1}u_{t+1}) \subset S=K[x_1, \ldots, x_n, x_{n+1}]$. Then $L$ is a 
  normally torsion-free square-free monomial ideal in $S$.   
 \end{proposition}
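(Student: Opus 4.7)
The strategy is to write $L$ as an intersection of two normally torsion-free ideals and then reduce the comparison $L^{k}=L^{(k)}$ to a bipartite max-flow/min-cut argument on $G$.

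Set $I':=(u_{1},\ldots,u_{t})$, so $I=I'+(u_{t+1})$, and set $I'':=I'+(x_{n+1})$ in $S$. I would first verify monomial-by-monomial, using Fact \ref{fact1}, that
\[
L \;=\; I''\cap IS.
\]
Indeed, a monomial in the right-hand side is either divisible by some $u_{i}$ with $i\leq t$ (hence in $L$), or it must be simultaneously divisible by $x_{n+1}$ (from $I''$) and $u_{t+1}$ (from $IS$), hence by $x_{n+1}u_{t+1}$, again in $L$. Because $G$ is bipartite, so is the subgraph whose edge ideal is $I'$; by the Simis--Vasconcelos--Villarreal theorem cited in the introduction, both $I$ and $I'$ are normally torsion-free. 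The extension $IS$ inherits normally torsion-freeness under flat base change, and the disjoint-support sum $I''$ is normally torsion-free by Theorem \ref{NTF.Th.2.5}.

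Applying Proposition \ref{Intersection} together with the normally torsion-free property of both factors,
\[
L^{(k)} \;=\; (I'')^{(k)}\cap (IS)^{(k)} \;=\; (I'')^{k}\cap I^{k}S \qquad (k\geq 1).
\]
Since $L^{k}\subseteq L^{(k)}$ is automatic, it suffices to verify the reverse inclusion. For a monomial $m\in L^{(k)}$, write $a:=\deg_{x_{n+1}}(m)$ and $m=x_{n+1}^{a}m'$ with $m'\in R$. Expanding $(I'')^{k}=\sum_{j=0}^{k}(I')^{k-j}\,x_{n+1}^{j}$ and using that generators of $(I')^{k-j}$ involve no $x_{n+1}$ yields
\[
m\in (I'')^{k}\Longleftrightarrow m'\in (I')^{\max(k-a,\,0)},\qquad m\in I^{k}S\Longleftrightarrow m'\in I^{k}.
\]
Similarly, $m\in L^{k}$ is equivalent to the existence of a length-$k$ factorization of $m'$ by generators of $I$ using at most $a$ copies of $u_{t+1}$. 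Everything therefore reduces to the combinatorial claim that any $m'\in I^{k}\cap (I')^{\max(k-a,\,0)}$ admits such a decomposition.

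For $a\geq k$ the claim is immediate. For $a<k$, write $d_{v}:=\deg_{v}(m')$ for $v\in V(G)$ and consider the integer packing LP
\[
\max\sum_{e\in E(G)}y_{e}\quad\text{s.t.}\quad y_{e}\geq 0,\; y_{u_{t+1}}\leq a,\; \sum_{e\ni v}y_{e}\leq d_{v}\ \forall v\in V(G).
\]
This is a max-flow problem on a bipartite network with vertex and one-edge capacities, so its LP relaxation has an integer optimum. Its LP dual, minimized over $(z,\alpha)\in\{0,1\}^{V(G)}\times\{0,1\}$, splits according to $\alpha$: for $\alpha=0$ one obtains a vertex cover $C$ of $G$ and the bound $\min_{C}\sum_{v\in C}d_{v}\geq k$ (from $m'\in I^{k}=I^{(k)}$), and for $\alpha=1$ one obtains a vertex cover $C'$ of $G\setminus\{u_{t+1}\}$ and the bound $a+\min_{C'}\sum_{v\in C'}d_{v}\geq a+(k-a)=k$ (from $m'\in (I')^{k-a}=(I')^{(k-a)}$). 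Hence the primal optimum is $\geq k$; truncating an integer optimizer to total size exactly $k$ supplies the required $I$-decomposition of $m'$. This gives $L^{(k)}=L^{k}$ for every $k\geq 1$, so $L$ is normally torsion-free by Theorem \ref{Villarreal1}(v)$\Leftrightarrow$(vi).

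\textbf{Main obstacle.} The substantive point is the last step: the bipartiteness of $G$ is the only hypothesis ensuring the integrality of the capacitated packing LP, and this is where the assumption of the proposition really enters. Everything else---the intersection decomposition, the normally torsion-freeness of the factors, and the translation of $L^{(k)}$ into a statement about $m'$---is routine bookkeeping with the cited results.
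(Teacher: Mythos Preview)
Your argument is correct and shares the paper's overall structure: both write $L=(I',x_{n+1})\cap IS$, observe that each factor is normally torsion-free, and reduce via Proposition~\ref{Intersection} to showing $(I',x_{n+1})^{k}\cap I^{k}S=L^{k}$. The divergence is in how this last identity is proved. The paper establishes the algebraic identity $I^{r}(I')^{s}=I^{r+s}\cap (I')^{s}$ directly, by expanding $I=I'+(u_{t+1})$ binomially and checking the nontrivial inclusion $(I')^{\lambda}u_{t+1}^{r+s-\lambda}\cap (I')^{s}\subseteq (I')^{s}u_{t+1}^{r}$ prime-by-prime via an explicit lcm computation on generators of $\mathfrak{p}^{\lambda}$ and $\mathfrak{p}^{s}$. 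You instead translate the same inclusion into an integer packing LP on $G$ with an added capacity $y_{u_{t+1}}\le a$, invoke total unimodularity of the bipartite incidence matrix (preserved under appending a unit row) to obtain an integral optimum, and bound the dual using $I^{(k)}$ and $(I')^{(k-a)}$. Your route makes the role of bipartiteness more transparent---it is exactly the TU/integrality step---and it generalizes readily to several capacitated edges, whereas the paper's lcm computation is more self-contained algebraically and avoids appealing to LP duality. Two minor remarks: in your dual bound you only need the inclusions $I^{k}\subseteq I^{(k)}$ and $(I')^{k-a}\subseteq (I')^{(k-a)}$, not the equalities; and the restriction of the dual to $\{0,1\}$-values deserves a one-line justification (reducing any $z_{v}$ or $\alpha$ above $1$ preserves feasibility without increasing the objective).
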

 
 \begin{proof}
  To simplify the notation,  we put $J:=(u_1, \ldots, u_t)$.  We first claim that $I^rJ^s=I^{r+s} \cap J^s$ for all $r,s \geq 0$. 
   The claim is true for $r=0$ or    $s=0$ or both of them; hence,  fix $r,s \geq 1$.   Since $I=J+(u_{t+1})$, we obtain the following equalities
  \begin{align*}
  I^{r+s}\cap J^s=& (J+(u_{t+1}))^{r+s} \cap J^s\\
  =& (\sum_{\lambda=0}^{r+s} J^{\lambda}u^{r+s-\lambda}_{t+1}) \cap J^s\\
  =& \sum_{\lambda=0}^{r+s} (J^{\lambda}u^{r+s-\lambda}_{t+1} \cap J^s)\\
  =& \sum_{\lambda=0}^{s-1} (J^{\lambda}u^{r+s-\lambda}_{t+1} \cap J^s) +
   \sum_{\lambda=s}^{r+s} (J^{\lambda}u^{r+s-\lambda}_{t+1} \cap J^s).
  \end{align*}
  Let $s\leq \lambda \leq r+s$. Then $J^{\lambda}u^{r+s-\lambda}_{t+1}  \subseteq J^s$, and so we can conclude that 
  $$\sum_{\lambda=s}^{r+s} (J^{\lambda}u^{r+s-\lambda}_{t+1} \cap J^s)= \sum_{\lambda=s}^{r+s} J^{\lambda}u^{r+s-\lambda}_{t+1} 
   =J^s \sum_{\theta=0}^{r} J^{\theta}u^{r- \theta}_{t+1}= J^sI^r.$$
    In what follows, we show that   $J^{\lambda}u^{r+s-\lambda}_{t+1} \cap J^s \subseteq J^su^{r}_{t+1}$ for all $0\leq \lambda \leq s$. 
  For this purpose,   fix $0 \leq \lambda \leq s$. Since  $J$ is a normally torsion-free square-free monomial ideal, we can deduce from 
  Definition \ref{Def.4.3.22}, Fact  \ref{Ex.6.1.25}, and Theorem \ref{Villarreal1} that
   $J^\lambda = J^{(\lambda)}= \bigcap_{\mathfrak{p}\in \mathrm{Min}(J)} \mathfrak{p}^{\lambda}$ 
  and  $J^s = J^{(s)}= \bigcap_{\mathfrak{p}\in \mathrm{Min}(J)} \mathfrak{p}^{s}$. According to  \cite[Exercise 7.9.1(a)]{MRS}, we get the following equalities
  \begin{align*}
   J^{\lambda}u^{r+s-\lambda}_{t+1} \cap J^s =&    \left(u^{r+s-\lambda}_{t+1} \bigcap_{\mathfrak{p}\in \mathrm{Min}(J)} \mathfrak{p}^{\lambda}\right) \cap   \left(\bigcap_{\mathfrak{p}\in \mathrm{Min}(J)} \mathfrak{p}^{s}\right)\\
  =& \left(\bigcap_{\mathfrak{p}\in \mathrm{Min}(J)}(u^{r+s-\lambda}_{t+1}  \mathfrak{p}^{\lambda})\right) \cap 
  \left(\bigcap_{\mathfrak{p}\in \mathrm{Min}(J)} \mathfrak{p}^{s}\right)\\
  =& \bigcap_{\mathfrak{p}\in \mathrm{Min}(J)}\left((u^{r+s-\lambda}_{t+1}  \mathfrak{p}^{\lambda}) \cap 
   \mathfrak{p}^{s}\right).
  \end{align*}
  We also  have $ J^su^{r}_{t+1}= \bigcap_{\mathfrak{p}\in \mathrm{Min}(J)}(u^{r}_{t+1}  \mathfrak{p}^{s})$. Fix $\mathfrak{p}\in \mathrm{Min}(J)$. 
  We show that $(u^{r+s-\lambda}_{t+1}  \mathfrak{p}^{\lambda}) \cap    \mathfrak{p}^{s} \subseteq u^{r}_{t+1}  \mathfrak{p}^{s}$. 
     Pick a monomial $v\in \mathcal{G}((u^{r+s-\lambda}_{t+1}  \mathfrak{p}^{\lambda}) \cap    \mathfrak{p}^{s})$. It follows from 
  \cite[Proposition 1.2.1]{HH1} that  $v=\mathrm{lcm}(u^{r+s-\lambda}_{t+1}f,g)$, 
    where $f\in \mathcal{G}({\mathfrak{p}}^{\lambda})$ and $g\in  \mathcal{G}({\mathfrak{p}}^{s})$.  On account of   $f\in \mathcal{G}({\mathfrak{p}}^{\lambda})$ (respectively, $g\in  \mathcal{G}({\mathfrak{p}}^{s})$), we can write $f=\prod_{i=1}^nx^{a_i}_i$ 
    (respectively,   $g=\prod_{i=1}^nx^{b_i}_i$) such that  $a_i\geq 0$ for all $i=1, \ldots, n$, $\sum_{i=1}^n a_i=\lambda$, and $x_i\in \mathfrak{p}$ when 
    $a_i>0$     (respectively, $b_i\geq 0$ for all $i=1, \ldots, n$, $\sum_{i=1}^n b_i=s$, and $x_i\in \mathfrak{p}$ when $b_i>0$). 
      Let  $u_{t+1}=x_\alpha x_\beta$ with $1\leq  \alpha \neq \beta \leq n$.  Since $v=\mathrm{lcm}(u^{r+s-\lambda}_{t+1}f,g)$, we can derive that    
  $$v=\prod_{i=1, i\notin\{\alpha, \beta\}}^{n} x_{i}^{\max\{a_i,b_i\}} x_{\alpha}^{\max\{r+s-\lambda +a_\alpha, b_\alpha\}}  
   x_{\beta}^{\max\{r+s-\lambda + a_\beta,b_\beta\}}.$$
   Due to $\deg_{x_{\alpha}} v= \max\{r+s-\lambda +a_\alpha, b_\alpha\}$ and $\deg_{x_{\beta}} v=\max\{r+s-\lambda + a_\beta,b_\beta\}$, we obtain 
      $\deg_{x_{\alpha}} v \geq r+s-\lambda +a_\alpha \geq r$ and $\deg_{x_{\beta}} v \geq r+s-\lambda + a_\beta \geq r$. As 
   $\sum_{i=1}^n a_i=\lambda$ and $\deg_{x_i} v =\max\{a_i, b_i\}$ for all $i\in \{1, \ldots, n\}\setminus \{\alpha, \beta\}$, we can conclude that   
   $$\sum_{i=1}^n\deg_{x_i} v \geq \sum_{i=1, i\notin\{\alpha, \beta\}}^na_i +r+s-\lambda +a_\alpha + r+s-\lambda + a_\beta \geq s+2r.$$
   It follows from   $\deg_{x_{\alpha}} v \geq r$, $\deg_{x_{\beta}} v \geq  r$, and $\sum_{i=1}^n\deg_{x_i} v \geq s+2r$  that $v\in u^{r}_{t+1}  \mathfrak{p}^{s}$. This leads to  $(u^{r+s-\lambda}_{t+1}  \mathfrak{p}^{\lambda}) \cap    \mathfrak{p}^{s} \subseteq u^{r}_{t+1}  \mathfrak{p}^{s}$ 
 for all   $\mathfrak{p}\in \mathrm{Min}(J)$,  and hence  $J^{\lambda}u^{r+s-\lambda}_{t+1} \cap J^s \subseteq J^su^{r}_{t+1}$ for all $0\leq \lambda \leq s$. Accordingly, we have    $I^rJ^s=I^{r+s} \cap J^s$ for all $r,s \geq 0$, as claimed. 
 
 To complete the proof, by  Theorem \ref{Villarreal1}, it is sufficient to demonstrate that  $L^k=L^{(k)}$ for all  $k\geq 1$. To do this, fix $k\geq 1$.  
 Since     $L=J+x_{n+1}u_{t+1}S$, we can  deduce from Fact \ref{fact1} that $L=(J, x_{n+1}) \cap (J, u_{t+1})=J+x_{n+1}I$. 
 It follows from  Proposition \ref{Intersection}   that $L^{(k)}=(J, x_{n+1})^{(k)} \cap (J, u_{t+1})^{(k)}$. It is well-known that the edge ideal of any bipartite 
 graph is normally torsion-free, and so $I=J+ (u_{t+1})$ is normally torsion-free. In addition, if we delete an edge from  a bipartite graph, then  the new graph is still bipartite, and hence $J$ is normally torsion-free. Due to $x_{n+1} \notin \mathrm{supp}(J)$, Theorem \ref{NTF.Th.2.5}  yields that $(J, x_{n+1})$ is normally 
 torsion-free. We therefore get $(J, u_{t+1})^{(k)}= (J, u_{t+1})^{k}$  and  $(J, x_{n+1})^{(k)}=(J, x_{n+1})^{k}$. This leads to  
  $L^{(k)}=(J, x_{n+1})^{k} \cap (J, u_{t+1})^{k}$.  From  the binomial theorem and Fact \ref{fact1}, we have the following equalities 
 \begin{align*}
  (J, x_{n+1})^k  \cap (J, u_{t+1})^k=& (J, x_{n+1})^k  \cap I^k=   \left(\sum_{\alpha=0}^{k}x_{n+1}^{k-\alpha}J^{\alpha}\right) \cap  I^k\\
  = &\sum_{\alpha=0}^{k} (x_{n+1}^{k-\alpha}J^{\alpha} \cap  I^k) =  \sum_{\alpha=0}^{k} x_{n+1}^{k-\alpha}J^{\alpha}   I^{k-\alpha} \\
  =& (J+ x_{n+1}I)^k\\
  =& L^k.
  \end{align*}
We thus get  $L^{k}=L^{(k)}$, and  Theorem \ref{Villarreal1} implies that $L$ is normally torsion-free, as desired.  
 \end{proof}

 
 It is natural to ask that can we  generalize Proposition \ref{Bipartite} to any square-free monomial ideal? In fact, assume that 
   $I=(u_1, \ldots, u_t, u_{t+1})$ is   a normally torsion-free square-free monomial ideal such that $\mathrm{supp}(I) \subseteq \{x_1, \ldots, x_n\}$ and 
  $L=(u_1, \ldots, u_t) + (x_{n+1}u_{t+1}) \subset S=K[x_1, \ldots, x_n, x_{n+1}]$. Then it is possible $L$ is not normally torsion-free. We provide such a counterexample in     Example \ref{Exa.NTF.1}.  For this purpose, we utilize Theorem \ref{Th.1} (when $\ell=2$).

 \begin{example} \label{Exa.NTF.1}
 {\em 
 Let $I=(x_3x_6, x_2x_5, x_1x_4, x_1x_5x_6, x_2x_4x_6, x_3x_4x_5, x_1x_2x_3)$ be a square-free monomial ideal in $R=K[x_1, \ldots, x_6]$. 
  We first show that $I$ is normally torsion-free.  To do this, our strategy is to use  Theorem \ref{Th.1}. Using \textit{Macaulay2} \cite{GS}, we have 
  $$\mathrm{Min}(I)=\mathrm{Ass}(I)=\{(x_3,x_2,x_1), (x_6,x_5,x_1), (x_6,x_4,x_2), (x_5,x_4,x_3)\}.$$
 Put $v:=x_3x_6$. It is not hard to check that $v\in \mathfrak{p}\setminus \mathfrak{p}^2$ for any $\mathfrak{p}\in \mathrm{Min}(I)$. 
 We demonstrate that $I\setminus x_3$ and $I\setminus x_6$ are normally torsion-free in two steps.
 
 \bigskip
\textbf{Step  1.}   We have $I\setminus x_3=(x_2x_5,x_1x_4,x_1x_5x_6,x_2x_4x_6)$, and so    
$$\mathrm{Min}(I\setminus x_3)=\mathrm{Ass}(I\setminus x_3)=\{(x_2,x_1), (x_5,x_4), (x_6,x_5,x_1), (x_6,x_4,x_2)\}.$$
To show  $I\setminus x_3$ is normally torsion-free, we use Theorem \ref{Th.1}. Set $v_1:=x_2x_5$ and $A:=I\setminus x_3$. 
 It is routine to check $v_1\in \mathfrak{p}\setminus \mathfrak{p}^2$ for any $\mathfrak{p}\in \mathrm{Min}(A)$. 
Furthermore, we have $A\setminus x_2=x_1(x_4, x_5x_6)$ and $A\setminus x_5=x_4(x_1, x_2x_6)$. According to 
 Theorem \ref{NTF.Th.2.5}  and  Lemma \ref{NTF.Lem.3.12}, both $A\setminus x_2$ and $A\setminus x_5$ are normally torsion-free. We thus obtain $I\setminus x_3$ is normally torsion-free. 

\bigskip
\textbf{Step  2.}   We have $I\setminus x_6=(x_2x_5, x_1x_4, x_3x_4x_5, x_1x_2x_3)$, and hence  
 $$\mathrm{Min}(I\setminus x_6)=\mathrm{Ass}(I\setminus x_6)=\{(x_5,x_1),(x_4,x_2),(x_3,x_2,x_1),(x_5,x_4,x_3)\}.$$
By using  Theorem \ref{Th.1}, we prove that   $I\setminus x_6$ is normally torsion-free.  Take  $v_2:=x_2x_5$ and $B:=I\setminus x_6$. 
 It is easy to see that  $v_2\in \mathfrak{p}\setminus \mathfrak{p}^2$ for any $\mathfrak{p}\in \mathrm{Min}(B)$. 
Moreover, we get  $B\setminus x_2=x_4(x_1, x_3x_5)$ and $B\setminus x_5=x_1(x_4,x_2x_3)$. In the light of  
  Theorem \ref{NTF.Th.2.5}  and  Lemma \ref{NTF.Lem.3.12}, both $B\setminus x_2$ and $B\setminus x_5$ are normally torsion-free. We therefore deduce that  $I\setminus x_6$ is normally torsion-free. 

Consequently, $I$ is normally torsion-free. Now, consider the following square-free monomial ideal in the polynomial ring $S=K[x_1, \ldots, x_7]$,
$$L:=(x_3x_6x_7, x_2x_5, x_1x_4, x_1x_5x_6, x_2x_4x_6, x_3x_4x_5, x_1x_2x_3).$$  
We can write $L=(x_3x_6x_7) +(x_2x_5, x_1x_4, x_1x_5x_6, x_2x_4x_6, x_3x_4x_5, x_1x_2x_3).$ 
Using \textit{Macaulay2} \cite{GS}, we obtain $(x_7,x_5,x_4,x_2,x_1) \in \mathrm{Ass}(S/L^2) \setminus \mathrm{Ass}(S/L)$. This means that $L$ is not normally torsion-free. 
}
 \end{example}


  Let $L=x_iI+x_jJ\subset R=K[x_1, \ldots, x_n]$ with $1\leq i\neq j \leq n$ be a square-free monomial ideal such that $\mathrm{gcd}(x_j, u)=1$  and 
   $\mathrm{gcd}(x_i, v)=1$   for all $u\in \mathcal{G}(I)$ and  $v\in \mathcal{G}(J)$. Also,    let  $I$, $J$, $I+J$, and $I+x_jJ$  be  normally 
   torsion-free  in $R$.   The following example shows that it is possible both $x_iI+J$ and $L$ are  not normally torsion-free in $R$. 
  In particular, Example \ref{Exa.NTF.2} tells us that the  hypotheses in Theorem \ref{Linear-Combination} are best possible.
 
 \begin{example}  \label{Exa.NTF.2}
 {\em 
 Suppose that   $L \subset  R=K[x_1, \ldots, x_8]$ is   the  following square-free monomial ideal 
 \begin{align*}
 L:=(& x_1x_2x_3, x_2x_3x_4, x_3x_4x_5, x_3x_7x_8, x_1x_3x_8, x_5x_3x_7, x_2x_6x_7, \\
&  x_4x_5x_6,  x_5x_6x_7,x_6x_7x_8, x_1x_2x_6).
 \end{align*}
   Let  square-free monomial ideals  $I:=(x_1x_2,x_2x_4,x_4x_5,x_7x_8,x_1x_8, x_5x_7)$ and $J:=(x_4x_5,x_5x_7,x_7x_8,x_1x_2, x_2x_7)$. Then 
  $L=x_3I+x_6J$.
 It is routine to see that  $\mathrm{gcd}(x_6, u)=1$  and   $\mathrm{gcd}(x_3, v)=1$   for all $u\in \mathcal{G}(I)$ and  $v\in \mathcal{G}(J)$. 
 Since $I$, $J$, and $I+J$  can be seen as the edge ideals of three bipartite graphs, we thus  conclude that all  $I$,  $J$, and $I+J$ 
  are normally torsion-free in $R$. 
  
  We now verify that $I+x_6J=(x_7x_8,x_1x_8,x_5x_7, x_4x_5,x_2x_4,x_1x_2,x_2x_6x_7)$ is normally torsion-free. 
 Set $M:=(x_7x_8,x_1x_8,x_5x_7, x_4x_5,x_2x_4,x_1x_2,x_2x_7)$. Thanks to  $M$ can be viewed as the edge ideal of a bipartite graph $G$ 
 with $V(G)=\{x_1, x_2, x_4, x_5, x_7, x_8\}$, it follows from Proposition \ref{Bipartite} that
$$
 I+x_6J=(x_7x_8,x_1x_8,x_5x_7, x_4x_5,x_2x_4,x_1x_2) + x_6(x_2x_7),
 $$
 is normally torsion-free as well. 
On the other hand, using \textit{Macaulay2} \cite{GS}, we  obtain  
 $(x_1, x_2, x_3, x_5, x_7)\in \mathrm{Ass}(R/(x_3I+J)^2)\setminus \mathrm{Ass}(R/(x_3I+J))$  and  
$(x_1, x_4, x_5, x_6, x_8)\in \mathrm{Ass}(R/L^2)\setminus \mathrm{Ass}(R/L)$. In other words, both  $x_3I+J$ and $L$ 
 are  not normally torsion-free in $R$.
}
\end{example}

 
  We can now formulate the  main result of this section in the next  theorem. In fact, Theorem \ref{Linear-Combination} gives  a necessary and 
  sufficient condition to determine the normally torsion-freeness  of a linear combination of two    normally torsion-free square-free monomial   ideals.

 \begin{theorem} \label{Linear-Combination}
  Let $L=x_iI+x_jJ\subset R=K[x_1, \ldots, x_n]$ with $1\leq i\neq j \leq n$ be a square-free monomial ideal such that $\mathrm{gcd}(x_j, u)=1$  and   $\mathrm{gcd}(x_i, v)=1$   for all $u\in \mathcal{G}(I)$ and  $v\in \mathcal{G}(J)$. Then $L$ is normally torsion-free if and only if 
     $x_iI+J$ and $I+x_jJ$   are  normally torsion-free  in $R$.     
 \end{theorem}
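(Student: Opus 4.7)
The plan is to handle the two implications separately. For the forward direction, I would first observe that, because of the hypotheses $\gcd(x_j,u)=1$ for $u\in\mathcal{G}(I)$ and $\gcd(x_i,v)=1$ for $v\in\mathcal{G}(J)$, together with $i\neq j$, setting $x_j=1$ (respectively $x_i=1$) in each minimal generator of $L$ preserves every $x_iu$ and collapses every $x_jv$ to $v$; hence $L/x_j=x_iI+J=A$ and $L/x_i=I+x_jJ=B$. If $L$ is normally torsion-free, Theorem~\ref{NTF.Th.3.19} applied to both contractions immediately delivers that $A$ and $B$ are normally torsion-free.

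For the converse, assume that $A$ and $B$ are normally torsion-free. Since $L$ is square-free, one has $x_i,x_j\notin\mathrm{supp}(I)\cup\mathrm{supp}(J)$, so the deletions satisfy $A\setminus x_i=J$ and $B\setminus x_j=I$; Theorem~\ref{NTF.Th.3.21} then gives that $I$ and $J$ are normally torsion-free. I would next introduce the auxiliary ideal $A':=x_iI+(x_j)$. Its two summands are supported on disjoint sets of variables, $(x_j)$ is trivially normally torsion-free, and $x_iI$ is normally torsion-free by Lemma~\ref{NTF.Lem.3.12}, so Theorem~\ref{NTF.Th.2.5} yields that $A'$ is normally torsion-free. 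Using Fact~\ref{fact1}(ii) together with the identity $(x_j)\cap J=x_jJ$ (which follows from $\gcd(x_j,v)=1$ for every $v\in\mathcal{G}(J)$), I obtain
\[
L \;=\; x_iI+x_jJ \;=\; x_iI+\bigl((x_j)\cap J\bigr) \;=\; \bigl(x_iI+(x_j)\bigr)\cap\bigl(x_iI+J\bigr) \;=\; A'\cap A.
\]
Proposition~\ref{Intersection}, combined with the identifications $A^{(k)}=A^k$ and $(A')^{(k)}=(A')^k$ provided by Theorem~\ref{Villarreal1}, then gives
\[
L^{(k)} \;=\; (A\cap A')^{(k)} \;=\; A^{(k)}\cap (A')^{(k)} \;=\; A^k\cap (A')^k \qquad (k\geq 1).
\]
By Theorem~\ref{Villarreal1}, proving that $L$ is normally torsion-free therefore reduces to establishing the identity $L^k=A^k\cap (A')^k$ for every $k\geq 1$.

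The inclusion $L^k\subseteq A^k\cap (A')^k$ is immediate from $L\subseteq A$ and $L\subseteq A'$. The reverse inclusion is the main obstacle. I would take a monomial $w\in A^k\cap (A')^k$ and decompose $w=x_i^{\alpha}x_j^{\beta}w_0$ with $\gcd(w_0,x_ix_j)=1$. Expanding $A^k=\sum_{s}x_i^sI^sJ^{k-s}$ and $(A')^k=\sum_{t}x_j^{k-t}x_i^tI^t$, the two memberships of $w$ supply some $s\leq\alpha$ with $U\in I^s$, $V\in J^{k-s}$ satisfying $UV\mid w_0$, and some $t$ with $t\leq\alpha$, $k-t\leq\beta$, and $U'\in I^t$ satisfying $U'\mid w_0$; the $(A')^k$-datum in particular forces $\alpha+\beta\geq k$, so the target range $[\max(0,k-\beta),\min(k,\alpha)]$ for $c$ is non-empty. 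When $s\geq k-\beta$, the choice $c=s$, $U''=U$, $V''=V$ closes the argument at once. The delicate subcase is $s<k-\beta$: one is forced to choose $c\in[k-\beta,t]$, and the naive candidates $U''=U'\in I^t\subseteq I^c$ together with a length-$(k-c)$ sub-product of $V\in J^{k-s}\subseteq J^{k-c}$ can fail to satisfy $U''V''\mid w_0$ whenever the supports of $U'$ and $V$ overlap in $\mathrm{supp}(I)\cap\mathrm{supp}(J)$. The heart of the argument is a monomial-exchange at this stage: one exploits the normally torsion-free structure of $I$ and $J$, that is, the primary decompositions $I^c=\bigcap_{\mathfrak{p}\in\mathrm{Min}(I)}\mathfrak{p}^c$ and $J^{k-c}=\bigcap_{\mathfrak{q}\in\mathrm{Min}(J)}\mathfrak{q}^{k-c}$ together with the variable-wise degree bounds that $A$ and $(A')$ being normally torsion-free impose on $w$ through $\mathrm{Min}(A)$ and $\mathrm{Min}(A')$, to rearrange the chosen $I$- and $J$-factors into a legitimate product $U''V''$ fitting inside $w_0$. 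Once this monomial-exchange is carried out, $L^k=L^{(k)}$ for all $k\geq 1$, and Theorem~\ref{Villarreal1} concludes that $L$ is normally torsion-free.
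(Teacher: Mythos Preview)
Your forward direction and your reduction of the converse to the identity $L^k=A^k\cap (A')^k$ (with $A=x_iI+J$ and $A'=x_iI+(x_j)$) are correct and match the paper's setup exactly: the paper also writes $L=(x_iI,x_j)\cap(x_iI,J)$ and reduces to the same intersection identity via Proposition~\ref{Intersection}.

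The gap is at the step you yourself flag as ``the main obstacle.'' Your proposed monomial-exchange is not carried out, and the ingredients you list---the primary decompositions $I^c=\bigcap_{\mathfrak p}\mathfrak p^c$, $J^{k-c}=\bigcap_{\mathfrak q}\mathfrak q^{k-c}$, together with degree bounds coming from $\mathrm{Min}(A)$ and $\mathrm{Min}(A')$---do not obviously suffice. In particular, you use the hypothesis that $B=I+x_jJ$ is normally torsion-free only to extract that $I$ is normally torsion-free; the full strength of $B$ being NTF is never invoked again. The paper's argument shows precisely where this hypothesis does the real work: from $B^k=B^{(k)}=(I,x_j)^k\cap(I+J)^k$ one expands both sides via the binomial theorem and reads off the purely $(I,J)$-level inclusion
\[
I^{\alpha}\cap\sum_{\beta=0}^{\alpha-1}J^{k-\beta}I^{\beta}\;\subseteq\;J^{k-\alpha}I^{\alpha}\qquad(0\le\alpha\le k-1).
\]
This inclusion is exactly the ``exchange'' you are trying to perform by hand, and it is not a consequence of $I$ and $J$ being individually NTF---it encodes the assumption on $B$. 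Once one has it, multiplying through by $x_i^{\alpha}$ and $x_j^{k-\alpha}$ transports it verbatim into the expansion of $(x_iI,x_j)^k\cap(x_iI,J)^k$, and the identity $L^k=A^k\cap(A')^k$ follows by the same binomial bookkeeping. So the missing idea is: run your intersection argument first for $B=(I,x_j)\cap(I+J)$ to harvest the displayed inclusion, and then feed that inclusion into the computation for $L$.
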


  \begin{proof}
  ($\Rightarrow$) Let $L$ be normally torsion-free. It follows from  Theorem \ref{NTF.Th.3.19}  that both $L/ x_i$  (the contraction of $L$ at $x_i$) and
   $L/ x_j$  (the contraction of $L$ at $x_j$) are normally torsion-free. This implies
   that $L/x_i= I+x_jJ$ and $L/x_j=x_iI+J$ are normally torsion-free.
   
   ($\Leftarrow$) Assume that  $I+x_jJ$ and $x_iI+J$ are normally torsion-free.         
  Due to $I+x_jJ$ is  normally torsion-free, we can deduce from Theorems  \ref{NTF.Th.3.19} and \ref{NTF.Th.3.21}
   that  $(I+x_jJ)/x_j$ (the contraction of $I+x_jJ$ at $x_j$)    and 
  $(I+x_jJ)\setminus x_j$ (the deletion of $I+x_jJ$ at $x_j$)   are normally torsion-free. Hence, $I+J$ and $I$ are normally torsion-free.  In addition, 
  Lemma \ref{NTF.Lem.3.12} implies that  $x_iI$ is normally torsion-free, and by virtue of  Theorem \ref{NTF.Th.2.5},  we get $(x_iI, x_j)$ and $(I, x_j)$ are  normally torsion-free. 
  In view of  Theorem \ref{Villarreal1}, we derive   $(I+J)^{k}=(I+J)^{(k)}$, $(I, x_j)^k=(I, x_j)^{(k)}$,  $(x_iI, x_j)^k=(x_iI, x_j)^{(k)}$, and 
  $(x_iI+J)^k=(x_iI+J)^{(k)}$.   Because  $L$ is square-free, we must have $\mathrm{gcd}(x_j, v)=1$   for all $v\in \mathcal{G}(J)$, and  
  since $\mathrm{gcd}(x_j, u)=1$    for all $u\in \mathcal{G}(I)$,  by using Fact \ref{fact1}, we obtain $L=(x_iI, x_j) \cap (x_iI, J)$ and $I+x_jJ=(I, x_j) \cap (I,J).$  
   Fix $k\geq 1$.  It follows from  Proposition \ref{Intersection}  that 
   \begin{equation}
   L^{(k)}=(x_iI, x_j)^{(k)} \cap (x_iI, J)^{(k)}=(x_iI, x_j)^{k} \cap (x_iI, J)^{k}, \label{eq.6.3}
   \end{equation}
    and 
 \begin{equation}
 (I+x_jJ)^{(k)}=(I, x_j)^{(k)} \cap (I,J)^{(k)}=(I, x_j)^{k} \cap (I,J)^{k}. \label{eq.6.4}
 \end{equation}
  
    Hence, we obtain  $(I+x_jJ)^{k}=(I, x_j)^{k} \cap (I,J)^{k}.$
    Using the binomial theorem and Fact \ref{fact1}, we get 
  \begin{align*}
  (I, x_j)^k  \cap (I,J)^k=& \left(I^k+ \sum_{\alpha=0}^{k-1}x_j^{k-\alpha}I^{\alpha}\right) \cap 
  \left(I^k+ \sum_{\beta=0}^{k-1} J^{k-\beta} I^{\beta}\right) \\
  =& I^k + \sum_{\alpha=0}^{k-1}  \sum_{\beta=0}^{k-1}  x_j^{k-\alpha}I^{\alpha}  \cap    J^{k-\beta} I^{\beta}\\
  =&  \sum_{\alpha=0}^{k-1}  \sum_{\beta=0}^{\alpha-1}  x_j^{k-\alpha}I^{\alpha}  \cap    J^{k-\beta} I^{\beta} \\
  +&  \left(I^k + \sum_{\alpha=0}^{k-1}    x_j^{k-\alpha}I^{\alpha}  \cap    J^{k-\alpha} I^{\alpha}\right)\\
  +&  \sum_{\alpha=0}^{k-1}  \sum_{\beta=\alpha +1}^{k-1}  x_j^{k-\alpha}I^{\alpha}  \cap    J^{k-\beta} I^{\beta}\\
    =&\sum_{\alpha=0}^{k-1}  \sum_{\beta=0}^{\alpha-1}  x_j^{k-\alpha}I^{\alpha}  \cap    J^{k-\beta} I^{\beta}   + (I+x_jJ)^k\\
   +& \sum_{\alpha=0}^{k-1}  \sum_{\beta=\alpha +1}^{k-1}  x_j^{k-\alpha} J^{k-\beta} I^{\beta}. 
     \end{align*}
  Furthermore, for each $\alpha+1 \leq \beta \leq  k-1$  and $0 \leq \alpha \leq k-1$, we have 
  $$x_j^{k-\alpha}J^{k-\beta}I^{\beta} = x_j^{\beta - \alpha} (x_j^{k-\beta}J^{k-\beta}I^{\beta}) \subseteq (I+x_jJ)^k.$$
  This gives rise to $\sum_{\alpha=0}^{k-1}  \sum_{\beta=\alpha +1}^{k-1}  x_j^{k-\alpha} J^{k-\beta} I^{\beta} \subseteq (I+x_jJ)^k$, and so 
  $$\sum_{\alpha=0}^{k-1} x_j^{k-\alpha} \left(I^{\alpha}  \cap \sum_{\beta=0}^{\alpha-1} J^{k-\beta} I^{\beta}\right) \subseteq   (I+x_jJ)^k.$$
   Accordingly, for all   $\alpha=0, \ldots, k-1$,  we can deduce   the following inclusion 
   \begin{equation}
    I^\alpha \cap \sum_{\beta =0}^{\alpha-1} J^{k-\beta}I^{\beta} \subseteq J^{k-\alpha} I^\alpha. \label{eq.6.5}
   \end{equation}
    From  the binomial theorem,  Fact \ref{fact1}, and   (\ref{eq.6.3}), we  get   the following 
    \begin{align*}
      L^{(k)}=& (x_iI, x_j)^{k} \cap (x_iI, J)^{k}\\
      =& \left((x_iI)^k+\sum_{\alpha=0}^{k-1}x_j^{k-\alpha}(x_iI)^{\alpha} \right) \cap
       \left((x_iI)^k+\sum_{\beta=0}^{k-1} J^{k-\beta}(x_iI)^{\beta} \right)\\
       =& (x_iI)^k+\sum_{\alpha=0}^{k-1} \sum_{\beta=0}^{k-1}  x_j^{k-\alpha}(x_iI)^{\alpha} 
       \cap        J^{k-\beta}(x_iI)^{\beta} \\
       =& \sum_{\alpha=0}^{k-1} \sum_{\beta=0}^{\alpha-1}  x_j^{k-\alpha}(x_iI)^{\alpha} 
       \cap        J^{k-\beta}(x_iI)^{\beta} \\
       +& \left((x_iI)^k+\sum_{\alpha=0}^{k-1}   x_j^{k-\alpha}(x_iI)^{\alpha}    \cap     J^{k-\alpha}(x_iI)^{\alpha}\right) \\
       +& \sum_{\alpha=0}^{k-1} \sum_{\beta=\alpha+1}^{k-1}  x_j^{k-\alpha}(x_iI)^{\alpha} 
       \cap        J^{k-\beta}(x_iI)^{\beta} \\
       =& \sum_{\alpha=0}^{k-1} \sum_{\beta=0}^{\alpha-1}  x_j^{k-\alpha}(x_iI)^{\alpha} 
       \cap        J^{k-\beta}(x_iI)^{\beta} + (x_iI+x_jJ)^k \\
       +& \sum_{\alpha=0}^{k-1} \sum_{\beta=\alpha+1}^{k-1}  x_j^{k-\alpha}     J^{k-\beta}(x_iI)^{\beta}. \\
    \end{align*} 
  In addition, for each $\alpha+1 \leq \beta \leq  k-1$  and $0 \leq \alpha \leq k-1$, we have 
    $$x_j^{k-\alpha}J^{k-\beta}(x_iI)^{\beta} = x_j^{\beta - \alpha} (x_j^{k-\beta}J^{k-\beta}(x_iI)^{\beta}) \subseteq (x_iI+x_jJ)^k.$$ 
   This leads to $\sum_{\alpha=0}^{k-1} \sum_{\beta=\alpha+1}^{k-1}  x_j^{k-\alpha}     J^{k-\beta}(x_iI)^{\beta} \subseteq (x_iI+x_jJ)^k$. Using 
    (\ref{eq.6.5}), we can derive the following 
   \begin{align*}
    \sum_{\beta=0}^{\alpha-1} x_j^{k-\alpha}(x_iI)^{\alpha}  \cap   J^{k-\beta}(x_iI)^{\beta}
   = &   x_j^{k-\alpha}x_i^{\alpha}   \left(I^{\alpha}   \cap  \sum_{\beta=0}^{\alpha-1}  J^{k-\beta} I^{\beta}\right)\\
     \subseteq &       x_j^{k-\alpha}x_i^{\alpha}  J^{k-\alpha} I^\alpha\\
    \subseteq  & (x_iI+x_jJ)^k.
     \end{align*}
  Consequently, $L^{(k)}=(x_iI+x_jJ)^k$, and so $L^{(k)}=L^k$. One can conclude from Theorem \ref{Villarreal1} that $L$ is normally torsion-free, as claimed.     
     \end{proof}
     
We conclude this paper  with the following example, which  shows  that how one can use Theorem \ref{Linear-Combination} to inspect  
the normally torsion-freeness of   square-free monomial ideals. 
  
  \begin{example}  \label{Exam. Linear. C8} 
  {\em 
  Let $C_8=(V(C_8), E(C_8))$ denote the cycle graph with vertex set $V(C_8)=\{x_1, \ldots, x_8\}$ and the following 
 edge set $$E(C_8)=\{\{x_i, x_{i+1}\} : i=1, \ldots, 7\} \cup \{\{x_1, x_8\}\}.$$ 
 Now, consider the following square-free monomial ideal in  $R=K[x_1, \ldots, x_8]$,
 \begin{align*}
 L=(&x_1x_2x_3x_4, x_2x_3x_4x_5, x_3x_4x_5x_6, x_4x_5x_6x_7, x_5x_6x_7x_8, x_6x_7x_8x_1, \\
 & x_7x_8x_1x_2, x_8x_1x_2x_3).
 \end{align*}
 In fact, $L$ is  the edge ideal of  the hypergraph $\mathcal{H}_3(C_8)$, where $\mathcal{H}_3(C_8)$ denotes  the  $4$-uniform hypergraph on the vertex set  $V(C_8)$, more information can be found in \cite{HN}.   It has already been shown in \cite[Proposition 3.4]{HN} that  $\mathcal{H}_3(C_8)$ is Mengerian, and so $L$ is normally torsion-free. Now,  our aim is to re-prove this  fact by using Theorem  \ref{Linear-Combination}.  Let 
 $I:=(x_1x_2x_3, x_2x_3 x_5, x_3x_5x_6, x_5x_6x_7)$ and $J:=(x_5x_6x_7, x_6x_7x_1, x_7x_1x_2, x_1x_2x_3)$. It is easy to check that $L=x_4I+x_8J$. 
 To establish the normally torsion-freeness of $L$, by Theorem \ref{Linear-Combination}, it is sufficient to show that both $x_4I+J$ and $I+x_8J$ are normally
 torsion-free. We accomplish this in two steps:
 
  \bigskip
\textbf{Step  1.} Our aim is to show that the following square-free monomial ideal 
 $$x_4I+J=(x_6x_7x_1,x_5x_6x_7,x_1x_2x_7,x_1x_2x_3,x_3x_4x_5x_6,x_2x_3x_4x_5),$$ is normally torsion-free. To do this, we consider 
  $A=(x_6x_1,x_5x_6,x_1x_2)$ and $B:=(x_1x_2,x_4x_5x_6,x_2x_4x_5)$. It is clear that $x_4I+J=x_7A+x_3B$.  Due to   Theorem \ref{Linear-Combination}, 
 we must verify that $x_7A + B$ and $A+x_3B$ are normally torsion-free. We first prove the normally torsion-freeness of $x_7A+B$. Put $A_1:=(x_7x_1,x_5x_7,x_4x_5)$ and  $B_1:=(x_1, x_4x_5)$. Obviously,  we have $x_7A+B=x_6A_1+x_2B_1$. So, we have to prove that
  $x_6A_1+B_1=(x_5x_6x_7, x_4x_5, x_1)$ and 
 $A_1+x_2B_1=(x_7x_1,x_5x_7,x_4x_5, x_1x_2)$ are normally torsion-free. Since    $A_1+x_2B_1=(x_7x_1,x_5x_7,x_4x_5, x_1x_2)$
  can be viewed as the edge ideal of a  bipartite graph, this implies that $A_1+x_2B_1$ is normally torsion-free. In addition, based on Theorem \ref{NTF.Th.2.5}, 
  $(x_6x_7, x_4)$ is normally torsion-free, and so $x_5(x_6x_7, x_4)$ is normally torsion-free. Once again,  Theorem \ref{NTF.Th.2.5}  implies that 
  $x_6A_1+B_1=(x_5x_6x_7, x_4x_5, x_1)$ is normally torsion-free. 
   Hence, we deduce that $x_7A + B$ is normally torsion-free. We now prove  that  $A+x_3B=(x_6x_1, x_5x_6, x_1x_2, x_2x_3x_4x_5)$ is  normally torsion-free. Set $A_2:=(x_1, x_5)$ and $B_2:=(x_1, x_3x_4x_5)$. 
 Hence,   $A+x_3B=x_6A_2+ x_2B_2$. It follows from Theorem \ref{NTF.Th.2.5}  that $A_2+x_2B_2=(x_1, x_5)$ is normally torsion-free.
  By  Theorem \ref{NTF.Th.2.5}, $(x_6, x_3x_4)$ is normally torsion-free, and so $x_5(x_6, x_3x_4)$ is normally torsion-free. This gives that 
  $x_6A_2+B_2=(x_5x_6, x_3x_4x_5, x_1)$ is  normally torsion-free. Consequently, $A + x_3B$ is normally torsion-free.

\bigskip
\textbf{Step  2.} We want to verify that the following square-free monomial ideal 
$$I+x_8J:=(x_1x_2x_3, x_2x_3 x_5, x_3x_5x_6, x_5x_6x_7,  x_8x_6x_7x_1, x_8x_7x_1x_2),$$
is normally torsion-free.  For this purpose, set $C:=(x_1x_2, x_2 x_5, x_5x_6)$ and $D:=(x_5x_6,  x_8x_6x_1, x_8x_1x_2)$. Since $I+x_8J=x_3C+x_7D$,  by  Theorem \ref{Linear-Combination}, we have to establish $x_3C+D=(x_1x_2x_3, x_2x_3x_5, x_5x_6, x_1x_6x_8, x_1x_2x_8)$ 
and $C+x_7D=(x_1x_2, x_2 x_5, x_5x_6, x_1x_6x_7x_8)$ are normally torsion-free. To show the normally torsion-freeness 
of $x_3C+D$, put $C_1:=(x_1x_3, x_3x_5, x_1x_8)$ and $D_1:=(x_5, x_1x_8)$. Clearly, we have $x_3C+D=x_2C_1+x_6D_1$. Based on Theorem \ref{Linear-Combination}, we must prove that $x_2C_1+D_1$ and $C_1+x_6D_1$ are normally torsion-free. Since $C_1+x_6D_1=(x_1x_3, x_3x_5, x_1x_8, x_5x_6)$ can be viewed as the edge ideal of a bipartite graph, this yields that $C_1+x_6D_1$ is normally torsion-free. Moreover,  Theorem \ref{NTF.Th.2.5}  implies that 
 $x_2C_1+D_1=(x_1x_2x_3, x_1x_8, x_5)$ is normally torsion-free, and so $x_3C+D=x_2C_1+x_6D_1$ is normally torsion-free. Here, we focus on 
$C+x_7D=(x_1x_2, x_2 x_5, x_5x_6, x_1x_6x_7x_8)$. Take $C_2:=(x_2, x_6x_7z_8)$ and $D_2:=(x_2,x_6)$. 
  Thus,  $C+x_7D=x_1C_2+x_5D_2$, and by  Theorem \ref{Linear-Combination}, one has  to explore the normally torsion-freeness of   $x_1C_2+D_2$  
  and  $C_2+x_5D_2$.  Since $(x_5x_6, x_6x_7x_8)=x_6(x_5, x_7x_8)$ is normally torsion-free, we can conclude from Theorem \ref{NTF.Th.2.5}  that $C_2+x_5D_2=(x_2, x_5x_6, x_6x_7x_8)$ is normally torsion-free. Thanks to $x_1C_2+D_2=(x_2, x_6)$ is  normally torsion-free, it follows from    Theorem \ref{Linear-Combination}  that  $C+x_7D=x_1C_2+x_5D_2$ is normally torsion-free.  This completes our argument. 
}
  \end{example}


 \vspace{1cm}
\centerline{\bf  The conflict of interest and data availability statement}

\vspace{4mm}
We hereby declare  that this  manuscript has no associated  data and also there is no conflict of interest in this manuscript.


\bigskip
\centerline{\bf Acknowledgments}
\vspace{4mm}
 
 The authors would like to thank Professor Melvin Hochster from University of Michigan for some helpful comments during the preparation of this paper. 



 
 \vspace{1cm}
  \textbf{Authors’ addresses}
 
 \bigskip
 \begin{itemize}
 \item[\textbf{Alain Bretto}]:  Universit\'e de Caen, GREYC CNRS UMR-6072, Campus II, Bd Marechal Juin BP 5186, 14032 Caen cedex, France\\
 Email address: alain.bretto@unicaen.fr \\
 ORCID:  https://orcid.org/0000-0003-2619-8070  \\
\item[\textbf{ Mehrdad Nasernejad}]: Univ. Artois, UR 2462, Laboratoire de Math\'{e}matique de  Lens (LML),  F-62300 Lens, France, ~~~ and \\
Universit\'e de Caen, GREYC CNRS UMR-6072, Campus II,  Bd Marechal Juin BP 5186, 14032 Caen cedex, France\\
Email address: m$\_$nasernejad@yahoo.com\\ 
ORCID: https://orcid.org/0000-0003-1073-1934\\
\item[\textbf{Jonathan Toledo}]: Tecnol\'o{g}ico Nacional de M\'e{x}ico, Instituto Tecnol\'o{g}ico Del Valle de Etla, Abasolo S/N, Barrio Del Agua Buena, 
Santiago Suchilquitongo, 68230, Oaxaca, M\'e{x}ico\\
E-mail address:  jonathan.tt@itvalletla.edu.mx\\
ORCID: https://orcid.org/0000-0003-3274-1367
  \end{itemize}



\begin{thebibliography}{999}

\bibitem{ANKRQ}  I. Al-Ayyoub, M. Nasernejad,   K. Khashyarmanesh,  L. G. Roberts, and V. C. Qui$\mathrm{\tilde{n}}$onez, 
{\it Results on the normality of square-free monomial ideals and cover ideals under some graph operations,}  Math. Scand. {\bf 127} (2021), no. 3, 441--457. 


\bibitem{ANR}  I. Al-Ayyoub,  M. Nasernejad, and L. G. Roberts, {\it  Normality of cover ideals of graphs and normality under some operations,}
 Results Math.  {\bf 74} (2019),  no. 4, Paper No. 140, 26 pp.


 
 \bibitem{ANR1}  I. Al-Ayyoub,  M. Nasernejad, and L. G. Roberts, {\it  On the normality and associated primes  of cover ideals of a class of imperfect graphs,}  
  Math. Rep. (Bucur.) \textbf{26(76)} (2024), no. 1, 59--69. 
 
  \bibitem{ANR2}  I. Al-Ayyoub,  M. Nasernejad, and L. G. Roberts, {\it  On the strong persistence property and normality of cover
 ideals of theta graphs,} 2023,  Comm. Algebra {\bf 51} (2023), no. 9, 3782--3791.
 
 
 \bibitem{Berge} C. Berge,  Hypergraphs:  Combinatorics of finite sets. Translated from the French. North-Holland Mathematical Library, 45. North-Holland Publishing Co., Amsterdam, 1989. {\rm x}+255 pp. ISBN: 0-444-87489-5 Hypergraphs Combinatorics of Finite Sets, Mathematical Library 45, North-Holland, 1989.

\bibitem{BR}   M. Brodmann, {\it Asymptotic stability of $\mathrm{Ass}(M/I^{n}M$),}  Proc. Amer. Math. Soc. {\bf 74} (1979),  16--18.


\bibitem{Cornuejols} G. Cornu\'e{j}ols,  Combinatorial optimization. Packing and covering. CBMS-NSF Regional Conference Series in Applied Mathematics, 74. Society for Industrial and Applied Mathematics (SIAM), Philadelphia, PA, 2001. xii+132 pp. 


\bibitem{CC}  M.  Conforti and  G. Corn\'u{e}jols,  \textit{A decomposition theorem for balanced matrices,}  Integer Programming and Combinatorial Optimization, Vol. \textbf{74}, 147--169, 1990.


\bibitem{GRV}     I. Gitler,  E. Reyes,  and R. H. Villarreal,  {\it Blowup algebras of ideals of vertex covers of bipartite graphs,} Contemp. Math.  {\bf 376} (2005), 273-279. 

\bibitem{GS}     D. R. Grayson and  M. E. Stillman, Macaulay2, a software system for research in algebraic geometry,  
\url{http://www.math.uiuc.edu/ Macaulay2/}.


\bibitem{HM}   H. T. H\`a  and  S. Morey, {\it Embedded associated primes of powers of square-free monomial ideals,} J. Pure Appl. Algebra
 {\bf 214} (2010), 301--308.


\bibitem{HH1} J. Herzog and T. Hibi, {Monomial Ideals,} Graduate Texts in Mathematics \textbf{260} Springer-Verlag, 2011.


\bibitem{HQ}  J. Herzog  and A. A. Qureshi, {\it  Persistence and stability properties of  powers of ideals,}   J. Pure Appl. Algebra { \bf 219} (2015),  530--542.


\bibitem{HN}  W. Hochst\"attler   and  M. Nasernejad, \textit{A classification of Mengerian $4$-uniform hypergraphs derived from graphs}, Submitted, 
\url{ArXiv:2311.11799}.   

 
\bibitem{KSS}   T. Kaiser, M. Stehl$\mathrm{\acute{i}}$k, and R. $\mathrm{\check{S}}$krekovski,  {\it Replication in critical graphs and the persistence of monomial ideals,} J. Comb. Theory, Ser. A  {\bf 123} (2014), 239--251.

\bibitem{KHN1}  K. Khashyarmanesh and  M. Nasernejad, {\it A note on the Alexander dual of  path ideals of rooted  trees,} Comm. Algebra  {\bf 46} (2018),  283--289.

\bibitem{KHN2} K. Khashyarmanesh and  M. Nasernejad,  {\it Some results on the  associated primes of monomial ideals,} Southeast Asian Bull. Math. {\bf 39} (2015), 439--451.

\bibitem{KNT} K. Khashyarmanesh, M. Nasernejad, and J.  Toledo, {\it Symbolic strong persistence property under monomial operations and strong persistence property of cover ideals,}  Bull. Math. Soc. Sci. Math. Roumanie (N.S.) {\bf 64(112)} (2021), no. 2, 105--131.

\bibitem{Mc} S. McAdam,  {Asymptotic Prime Divisors}, Lecture Notes in Mathematics {\bf  103}, Springer-Verlag, New York, 1983.

\bibitem{MMV}   J. Martinez-Bernal, S. Morey, and R. H. Villarreal,  {\it  Associated primes of powers of edge ideals,} J. Collect. Math. {\bf 63} (2012), 361--374.

\bibitem{MRS} W. F. Moore, M. Rogers, and S. Sather-Wagstaff,  Monomial ideals and their decompositions. Universitext. Springer, Cham, 2018. xxiv+387 pp. 

\bibitem{MNQ} A.  Musapa\c{s}ao\u{g}lu, M. Nasernejad, and A. A. Qureshi, {\it The edge ideals of $\textbf{t}$-spread $d$-partite hypergraphs},    
Collect. Math. \textbf{75} (2024), no. 3, 735--751. 

\bibitem{N1}   M. Nasernejad, {\it Asymptotic behaviour of associated primes of monomial ideals with combinatorial applications,}  J. Algebra Relat. Top.
 {\bf 2} (2014) 15--25.

\bibitem{N2}     M. Nasernejad, {\it Persistence property for some  classes of  monomial ideals of a polynomial ring,} J. Algebra Appl. {\bf 16}, no. 5 (2017), 
 1750105 (17 pages).

\bibitem{Nasernejad1} M. Nasernejad, {\it Some counterexamples for (strong) persistence property and (nearly) normally torsion-freeness}, 
 Examples and Counterexamples {\bf  3} (2023), 100111. 

\bibitem{NBR} M. Nasernejad, S. Bandari,  and L. G. Roberts, \textit{Normality and associated primes of closed neighborhood  ideals and dominating ideals}, 
Journal of Algebra and Its Applications, 2023, \doi{10.1142/S0219498825500094}. 

\bibitem{N3}    M. Nasernejad   and K. Khashyarmanesh, {\it On the Alexander dual of the path ideals of rooted and unrooted trees,} Comm. Algebra  
  {\bf 45} (2017), 1853--1864.

\bibitem{NKA}    M. Nasernejad,  K. Khashyarmanesh, and  I. Al-Ayyoub, {\it Associated primes of powers  of cover ideals under graph operations,} Comm. Algebra, {\bf 47} (2019), no. 5, 1985--1996. 

\bibitem{NKRT}   M. Nasernejad,  K. Khashyarmanesh, L. G. Roberts, and J. Toledo, {\it The strong persistence property and symbolic strong persistence property,} {Czechoslovak Math. J.}, {\bf  72(147)}  (2022), no. 1, 209--237. 

\bibitem{NQ} M. Nasernejad and  A. A. Qureshi, {\it  Algebraic implications of neighborhood hypergraphs and their transversal hypergraphs}, 
Comm. Algebra \textbf{52} (2024), no. 6, 2328--2345. 


\bibitem{NQT}  M. Nasernejad,  V. Crispin  Qui\~nonez,  and   J. Toledo, \textit{Normally torsion-freeness and normality  criteria for monomial ideals}, 
2024, To appear in AMS Contemporary Mathematics. \url{https://arxiv.org/abs/2408.05561}
 

\bibitem{NQBM} M. Nasernejad, A. A. Qureshi, S. Bandari, and A.  Musapa\c{s}ao\u{g}lu, {\it  Dominating ideals and closed neighborhood ideals of graphs}, Mediterr. J. Math. {\bf 19}  (2022), no. 4, Paper No. 152, 18 pp. 

\bibitem{NQKR}  M. Nasernejad, A. A. Qureshi, K. Khashyarmanesh, and L. G. Roberts,  {\it Classes of normally and nearly normally torsion-free monomial ideals}, Comm. Algebra, {\bf 50(9)} (2022), 3715--3733. 


\bibitem{RNA}  S. Rajaee, M. Nasernejad, and  I. Al-Ayyoub, {\it Superficial ideals for monomial ideals,} J. Algebra Appl. {\bf 16}(2)  (2018) 1850102 (28 pages).

\bibitem{RA}    L. J. Ratliff, Jr., {\it On prime divisors of $I^n$, n large,} Michigan Math. J. {\bf 23} (1976),  337--352.


\bibitem{RT}    E. Reyes  and J. Toledo, {\it On the strong persistence property for monomial ideals,}  Bull. Math. Soc. Sci. Math. Roumanie (N.S.) 
{\bf 60(108)} (2017),  293--305. 

 \bibitem{SN}  M. Sayedsadeghi and M. Nasernejad, {\it Normally torsion-freeness of monomial ideals under monomial operators,}  Comm. Algebra {\bf 46}(12) (2018),  5447--5459.

\bibitem{SNQ1}  M. Sayedsadeghi,  M. Nasernejad, and A. A. Qureshi,  {\it  Correction to the article: On the embedded associated primes of monomial ideals,}
 Rocky Mountain J. Math. 53 (2023), no. 5, 1657--1659. 
 
 \bibitem{SNQ}  M. Sayedsadeghi,  M. Nasernejad, and A. A. Qureshi,  {\it On the embedded associated primes of monomial ideals,}  Rocky Mountain J. Math. {\bf 52(1)} (2022), 275-287.  

\bibitem{sharp}   R. Y. Sharp, { Steps in commutative algebra,}  London Mathematical Society Student Texts  {\bf 19}. Cambridge University Press, Cambridge (1990), 2nd edition (2000).

\bibitem{SVV}   A. Simis, W. Vasconcelos, and R. Villarreal,  {\it On the ideal theory of graphs,} J. Algebra {\bf 167} (1994),  389--416.

\bibitem{VI1}   R. H. Villarreal, {\it Cohen-Macaulay graphs,}  Manuscripta Math. {\bf 66} (1990),  277--293.


 \bibitem{V1} R. H. Villarreal, {Monomial Algebras.} 2nd. Edition, Monographs and Research Notes in Mathematics, CRC Press, Boca Raton, FL, 2015.
 
 \end{thebibliography}
\end{document}